\theoremstyle{plain}
\newtheorem{theorem}{Theorem}[section]
\newtheorem{proposition}[theorem]{Proposition}
\newtheorem{lemma}[theorem]{Lemma}
\newtheorem{conjecture}[theorem]{Conjecture}
\theoremstyle{definition}
\newtheorem{example}[theorem]{Example}
\newtheorem{definition}[theorem]{Definition}
\newtheorem{remark}[theorem]{Remark}
\newcommand{\ord}{{\rm ord}}
\newcommand{\genus}{{\rm genus}}
\newcommand{\Gal}{{\rm Gal}}
\newcommand{\ZZ}{\mathbb{Z}}
\newcommand{\Li}{\mathrm{Li}}
\newcommand\FF{\mathbb{F}}
\newcommand\CC{\mathbb{C}}
\newcommand\QQ{\mathbb{Q}}
\newcommand\AAA{\mathbb{A}}
\newcommand\disc{{\rm disc}}
\begin{document}

\title[Chebotarev density theorem in short intervals]{Chebotarev density theorem in short intervals for extensions of $\FF_q(T)$}
\author{Lior Bary-Soroker}
\address{Raymond and Beverly Sackler School of Mathematical Sciences, Tel Aviv University, Tel Aviv 69978, Israel}
\email{barylior@post.tau.ac.il}
\thanks{LBS  was partially supported by a grant of the Israel Science Foundation.
	Part of the work was done while LBS was a member of Simons CRM Scholar-in-Residence Program.}
\author{Ofir Gorodetsky}
\address{Raymond and Beverly Sackler School of Mathematical Sciences, Tel Aviv University, Tel Aviv 69978, Israel}
\email{ofir.goro@gmail.com}
\thanks{The research of OG was supported by the European Research Council under the European Union's Seventh Framework Programme (FP7/2007-2013) / ERC grant agreement n$^{\text{o}}$ 320755.}

\author{Taelin Karidi}
\address{Department of Mathematics, Caltech, Pasadena, CA 91125, USA}
\email{tkaridi@caltech.edu}

\author{Will Sawin}
\address{Department of Mathematics, Columbia University, New York, NY 10027, USA}
\email{sawin@math.columbia.edu}
\thanks{This research was partially conducted during the period WS served as a Clay Research Fellow, and partially conducted during the period he was supported by  Dr. Max R\"{o}ssler, the Walter Haefner Foundation and the ETH Zurich Foundation.}

\subjclass[2010]{Primary 11N05; Secondary 11T06, 12F10}

\begin{abstract}
An old open problem in number theory is whether Chebotarev density theorem holds in short intervals. More precisely, given a Galois extension $E$ of $\QQ$ with Galois group $G$, a conjugacy class $C$ in $G$ and an $1\geq \varepsilon>0$, one wants to compute the asymptotic of the number of primes $x\leq p\leq x+x^{\varepsilon}$ with Frobenius conjugacy class in $E$ equal to $C$. The level of difficulty grows as $\varepsilon$ becomes smaller. Assuming the Generalized Riemann Hypothesis, one can merely reach the regime $1\geq\varepsilon>1/2$. We establish a function field analogue of Chebotarev theorem in short intervals for any $\varepsilon>0$. Our result is valid in the limit when the size of the finite field tends to $\infty$ and when the extension is tamely ramified at infinity. The methods are based on a higher dimensional explicit Chebotarev theorem, and applied in a much more general setting of arithmetic functions, which we name $G$-factorization arithmetic functions.
\end{abstract}

\maketitle
\section{Introduction}
The goal of this paper is to provide support to an open problem in the distribution of primes with a given Frobenius conjugacy class. We do this by resolving a function field version of the problem. We start by introducing the problem in number fields, and then we present our results. 
\subsection{The Chebotarev Density Theorem in short intervals}
One of the main theorems in algebraic number theory is the Chebotarev Density Theorem about the distribution of Frobenius conjugacy classes in Galois extensions of global fields. To keep the presentation as simple as possible, we fix the base field to be $\QQ$.
Let $E$ be a finite Galois extension of $\QQ$ with Galois group $G=\Gal(E/\QQ)$ and with ring of integers $\mathcal{O}_E$. For a prime number $p$, we define
\[
\left( \frac{E/\QQ}{ p} \right)\subseteq G,
\]
to be the set of all $\sigma \in G$ for which there exists a prime $\mathfrak{P}$ of $E$ lying above $ p$ such that
\[
\sigma (x) \equiv x^{p} \mod \mathfrak{P},
\]
for all $x\in \mathcal{O}_E$.  If $p$ is unramified in $E$, then $\left( \frac{E/\QQ}{ p} \right)$ is called the \emph{Frobenius} at $p$ and it is a conjugacy class in $G$.

The Chebotarev Density Theorem says that as $p$ varies, the Frobenius equidistributes   in the set of conjugacy classes (with the obvious weights). More precisely, let
\[
\pi(x) = \# \{ p \leq x : p \mbox{ prime number}\}
\]
be the prime counting function. By the Prime Number Theorem, we know that $\Li(x) = \int_{2}^x \frac{dt}{\log t} \sim \frac{x}{\log x}$ well approximates $\pi(x)$; that is to say, for any $A>1$ we have
\[
\pi(x) = \Li(x) + O_A(x/(\log x)^A), \qquad x\to\infty.
\]
For a  conjugacy class $C \subseteq G$, let
\begin{equation*}
\pi_C(x;E)= \#\left\{ p \le x : p \mbox{ prime number} \mbox{ and } \left( \frac{E/\QQ}{p} \right)=C\right\}
\end{equation*}
be the function that counts primes with Frobenius equals to $C$. 
The Chebotarev Density Theorem \cite[Theorem~2.2, Chapter~I]{Serre1998} says that
\begin{equation}\label{eq:Cheb}
\pi_C(x;E) \sim \frac{|C|}{|G|} \Li(x), \qquad x\to \infty.
\end{equation}
This theorem is a vast generalization of the Prime Number Theorem for  arithmetic progressions which follows from \eqref{eq:Cheb} applied to cyclotomic fields.

It is both natural and important for applications to consider the Chebotarev Density Theorem in short intervals. Balog and Ono \cite{balog2001} studied the non-vanishing of Fourier coefficients of modular forms in short intervals. For this application they prove that
\begin{equation}\label{eq:SI}
\pi_{C}(x+y;E) -\pi_{C}(x;E) \sim \frac{|C|}{|G|} \frac{y}{\log x}, \qquad x\to\infty,
\end{equation}
for $x^{1-1/c(E)+\varepsilon} \leq y\leq x$, and where $c(E)>0$ is a constant depending only on $E$ (and in fact only on $[E:\QQ]$). Thorner \cite[Corollary~1.1]{thorner2016} improves the range of $y$ for which \eqref{eq:SI} holds true.

 Naively, we expect that \eqref{eq:SI} holds for any $y=y(x)\leq x$ that grows
`sufficiently fast'.
From \eqref{eq:Cheb}, it follows that the average gap between primes with $\left(\frac{E/\QQ}{p}\right) = C$ is $\frac{|G|}{|C|}\log x$. Thus it makes sense to only consider $y$-s satisfying $\lim_{x\to \infty} \frac{y}{\log x} = \infty$.
The Maier phenomenon \cite{maier1985} about primes tells us that  \eqref{eq:SI} fails unless $y \gg (\log x)^A$ for all $A> 1$. 
A folklore conjecture says that for any fixed $\varepsilon>0$ and $y=x^{\varepsilon}$  
the asymptotic formula \eqref{eq:SI} holds true:
\begin{conjecture}\label{conj:Cheb_SI}
Let $E/\QQ$ be a Galois extension with Galois group $G$,  $1\geq \varepsilon>0$, and  $C\subseteq G$  a conjugacy class. Then
\[
\pi_C(x+x^{\varepsilon};E) - \pi_C(x;E) \sim \frac{|C|}{|G|} \frac{x^{\varepsilon}}{\log x}, \qquad x\to \infty.
\]
\end{conjecture}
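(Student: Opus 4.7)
The plan is to attack the conjecture via the classical analytic machinery of Artin L-functions. First I would reduce to a character-sum count: by orthogonality of the irreducible characters $\chi$ of $G$, the indicator function of the conjugacy class $C$ decomposes as $\mathbf{1}_C = \frac{|C|}{|G|} \sum_{\chi} \overline{\chi(C)}\, \chi$, so that $\pi_C(x+y;E) - \pi_C(x;E)$ expands into a main term $\frac{|C|}{|G|}(y/\log x)$ coming from the trivial character, plus, for each nontrivial irreducible $\chi$, a character sum $\sum_{x < p \le x+y} \chi(\mathrm{Frob}_p)$. Each such sum would then be handled by the Artin--Weil explicit formula applied to $L(s,\chi,E/\QQ)$, reducing the problem to showing that the zero contribution $\sum_{\rho} \frac{(x+y)^{\rho} - x^{\rho}}{\rho}$ is $o(y/\log x)$.

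The next step is bounding this zero sum. Under the Generalized Riemann Hypothesis for Artin L-functions, every nontrivial zero satisfies $\Re \rho = 1/2$, and the pointwise estimate $\left|\frac{(x+y)^\rho - x^\rho}{\rho}\right| \le \min\bigl(2 x^{1/2}/|\rho|,\; y\, x^{-1/2}\bigr)$, combined with the standard bound $N(T,\chi) \ll T \log T$ on the density of zeros up to height $T$, yields the desired asymptotic exactly when $y \gg x^{1/2} (\log x)^{A}$. To push below the square-root threshold one would need to inject zero density estimates of the form $N(\sigma, T, \chi) \ll T^{c(1-\sigma)} (\log T)^{B}$ in the style of Ingham and Huxley, or log-free variants thereof, so as to argue that zeros with real part close to $1$ are extremely sparse for every irreducible $\chi$ of $G$. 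This is essentially the pathway followed by Balog--Ono and refined by Thorner to get \eqref{eq:SI} in ranges shrinking with $[E:\QQ]$.

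The main obstacle is that the conjecture, even in the trivial case $E = \QQ$ where $G$ is trivial, already contains the notoriously open problem that $\pi(x + x^\varepsilon) - \pi(x) \sim x^\varepsilon / \log x$ for every $\varepsilon > 0$. The best known unconditional result, due to Baker, Harman and Pintz, only reaches $\varepsilon > 0.525$, and GRH alone gives nothing below the square-root barrier. Consequently a full proof of Conjecture~\ref{conj:Cheb_SI} for all $\varepsilon > 0$ appears out of reach of present techniques over $\QQ$: it would demand a genuinely new analytic input well beyond GRH --- perhaps on the pair correlation of zeros, or on the absence of clusters of zeros near the line $\Re s = 1$ --- whose prospects are currently remote. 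I would therefore expect the paper to address the conjecture not directly, but by shifting to the function field analogue, where the geometry of $G$-covers of affine space and Deligne-style equidistribution supply the extra leverage that is unavailable over $\QQ$, as the abstract indicates.
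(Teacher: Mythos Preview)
Your assessment is correct: the statement in question is a \emph{conjecture}, not a theorem, and the paper offers no proof of it. Your sketch of the analytic approach via Artin $L$-functions, explicit formulas, and zero-density estimates is a reasonable account of how one attacks such questions, and your identification of the obstruction --- that already the case $E=\QQ$ contains the open short-interval prime problem, with GRH blocked at the $\varepsilon>1/2$ barrier --- matches exactly what the paper says in the surrounding discussion. Your final expectation is also right on target: the paper does not attempt Conjecture~\ref{conj:Cheb_SI} over $\QQ$ at all, but instead establishes a function field analogue (Theorem~\ref{thm:chebotarevshort}) in the large-$q$ limit, using a higher-dimensional explicit Chebotarev theorem and a Galois-group computation for wreath products $G\wr S_n$ rather than any zero-based analytic input.
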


When $E=\QQ$, Conjecture~\ref{conj:Cheb_SI} reduces to primes in short intervals, and we refer the reader to the excellent survey \cite{sound2007} for further reading on this case.

One approach for Conjecture~\ref{conj:Cheb_SI} is to study the error term in Chebotarev Density Theorem.
Let
\[
\Delta_{E;C}(x)= \pi_C(x,E) - \frac{|C|}{|G|}\Li(x)
\]
and let $d_E$ be the absolute value of the discriminant of $E$. Under the Riemann Hypothesis for the Dedekind zeta function  $\zeta_E$ of  $E$,
 Lagarias and Odlyzko \cite{lagarias1975} gave the bound
\begin{equation}\label{eq:DeltaECBound}
\Delta_{E;C}(x) = O(|C| x^{1/2}( \log x + \frac{\log d_E}{|G|}) ),
\end{equation}
where the implied constant is effective and absolute. We borrow the above formulation from \cite[Theorem.~4]{serre1981}. See \cite[Cor.~1]{grenie2019} for a calculation of the implied constants and \cite[Cor.~3.7]{rammurty1988} for an improved  dependence on $|C|$.

From \eqref{eq:DeltaECBound}, in particular conditionally on the Riemann Hypothesis for $\zeta_E$, one immediately gets Conjecture~\ref{conj:Cheb_SI} for any $\varepsilon>1/2$. As discussed above, there are  unconditional results. However, the case $\varepsilon \leq 1/2$ falls beyond the Generalized Riemann Hypothesis.

\subsection{The Chebotarev Density Theorem in function fields}\label{sec:ChebFunc}
The function field Chebotarev Density Theorem has a long history, starting with
Reichardt \cite{reichardt1936} who first established it. Lang \cite{lang1956} gave a square-root cancellation, based on the Riemann Hypothesis for curves over finite fields, and explicit estimates  were given by Cohen and Odoni in the appendix to  \cite{cohen1977} and by Halter-Koch \cite[Satz~2]{halterkoch1991}. Fried and Jarden \cite[Proposition~6.4.8]{fried2005} and Murty and Scherk \cite{kumarmurty1994} gave explicit bounds on the error term. 

However, unlike the number field case, there are two obstructions in the Chebotarev Density Theorem. One obstruction comes from the arithmetic part of the Frobenius and the other appears when considering short intervals. For a more concise presentation of the obstruction we introduce some notation.

Let $q$ be a power of a prime number $p$, let $\FF_q$ be the finite field of $q$ elements, and let $\FF_q(T)$ be the field of rational functions over $\FF_q$. We define $\mathcal{P}_{n,q}$ as the set of primes of $\FF_q(T)$ of degree $n$. If $n>1$, we identify $\mathcal{P}_{n,q}$ with the set of monic irreducible polynomials in the ring of polynomials $\FF_q[T]$, and we identify $\mathcal{P}_{1,q}$ with the degree-$1$ monic polynomials and $1/T$ `the infinite prime'. The prime polynomial theorem says that
\[
\pi_q(n) = \# \mathcal{P}_{n,q} = \frac{q^n}{n}(1 +O(q^{-n/2})),
\]
and so we use $\frac{q^n}{n}$ as an estimate for $\pi_q(n)$.

Given a Galois extension $E/\FF_q(T)$ with Galois group $G=\Gal(E/\FF_q(T))$, for each $P\in \mathcal{P}_{n,q}$
we define the \emph{Frobenius at $P$},
\begin{equation}\label{eq:FrobatP}
\left( \frac{E/\FF_q(T)}{ P} \right)\subseteq G,
\end{equation}
as in the number field setting: it is the set of $\sigma\in G$ for which there exists a prime $\mathfrak{P}$ of $E$ lying above $P$ such that
\[
\sigma(x) \equiv x^{|P|} \mod \mathfrak{P},
\]
for all $x\in E$ which are integral at $\mathfrak{P}$ and where $|P|=q^{n}$. As before, if $P$ is unramified in $E$, then $\left( \frac{E/\FF_q(T)}{ P} \right)$ is a conjugacy class in $G$.
Given a conjugacy class $C \subseteq G$, we set
\begin{equation*}
\pi_{C;q}(n;E) =  \# \left\{ P \in \mathcal{P}_{n,q}:  \left(\frac{E/\FF_q(T)}{P}\right) = C \right\},
\end{equation*}
the function that counts primes with Frobenius $C$.

To describe the obstruction for a conjugacy class to be a Frobenius of a prime of degree $n$, we introduce the restriction map. Let $\FF_{q^{\nu}}$ be the field of scalars of $E$, that is, the algebraic closure of $\FF_q$ in $E$. Let $\phi\colon \FF_{q^{\nu}} \to \FF_{q^{\nu}}$, $\phi(x)=x^q$ be the generator of the cyclic group $G_0=\Gal(\FF_{q^{\nu}}/\FF_q)$. We have the restriction of automorphisms map $G\twoheadrightarrow G_0$, which is surjective. Since $G_0$ is abelian, if $C\subseteq G$ is a conjugacy class, then all $\sigma\in C$ map to the same power $\phi_C$ of $\phi$.
Then the Chebotarev Density Theorem for function fields says that if $\phi_C = \phi^n$, then
\begin{equation}\label{eq:Cheb_FF}
\left|\pi_{C;q}(n;E) - \nu \frac{|C|}{|G|} \frac{q^n}{n}\right| \ll \nu \frac{|C|}{|G|} \max\{  \mathrm{genus}(E) , \frac{|G|}{\nu}\}\frac{q^{n/2}}{n}
\end{equation}
and otherwise $\pi_{C;q}(n;E)=0$. The implied constant is absolute.

Next we turn to short intervals. 
Following Keating and Rudnick \cite[\S2.1]{keating2014}, we define a short interval around a polynomial $f$ of degree $n$ with parameter $0\le m<n$ to be
\[
I(f,m) = \{ f+g : \deg g\leq m\}.
\]
The size of the interval is 
\[
\# I(f,m)=q^{m+1}.
\]
To compare with the number field interval $\{ x\leq n\leq x+x^\varepsilon\}$, we see that $x$ corresponds to $|f|=q^n$ and $x^\varepsilon$ corresponds to $q^{m+1}$, so $\varepsilon=\frac{m+1}{n}$. Having the analogy with number fields in mind, one would naively expect that \eqref{eq:Cheb_FF} implies a Chebotarev  Density Theorem for the short interval $I(f,m)$ whenever $m+1> n/2$ (i.e., $\varepsilon > 1/2$).
However, there seems to be no direct such implication.
Letting
\[
\pi_{C;q}(I(f,m);E) = \# \left\{ P \in P_{n,q}\cap I(f,m)  : \left(\frac{E/\FF_q(T)}{P}\right)=C \right\} ,
\]
then unlike in the number field case, we cannot express $\pi_{C;q}(I(F,m);E)$ as the difference of values of  $\pi_{C;q}(n;E)$ in order to utilize the error term \eqref{eq:Cheb_FF}.

In fact, there is an obstruction to Chebotarev in short intervals coming from the fact that $E$ is not necessarily linearly disjoint from the cyclotomic field $L_{n-m-1}$ associated to a power of the infinite prime (see \cite[Chapter~12]{rosen2002}). Thus one needs to modify the asymptotic formula according to the intersection of $E$ and $L_{n-m-1}$. 
Applying \eqref{eq:Cheb_FF} to the compositum of $EL_{n-m-1}$ would yield a Chebotarev in short intervals for $\varepsilon >1/2$. We note that the extensions $L_{n-m-1}$ are wildly ramified at the infinite prime.

Our main result is a Chebotarev Density Theorem for short intervals with any $\varepsilon>0$ for extensions that are tamely ramified at the infinite prime. Thus the result goes beyond the Riemann Hypothesis. For simplicity of presentation we consider only geometric extensions. We indicate at the end of the paper how to handle non-geometric extensions. 

\begin{theorem}\label{thm:chebotarevshort}
For every $B>0$ there exists a constant $M_B$ satisfying the following property. Let $q$ be a prime power. Let $n>m \ge 2$ if $q$ is odd and $n> m \ge 3$ otherwise. Let $G$ be a finite group and let $E/\FF_q(T)$ be a geometric $G$-extension. Assume that the infinite prime is tamely ramified in the fixed field $ E^{ab}$ in $E$ of the commutator of $G$. Further assume that $\genus(E),n,|G|\leq B$. Let $f\in \FF_q[T]$ be monic of degree $n$. Then
\[
\left|\frac{1}{q^{m+1}}\pi_{C;q}(I(f,m);E) - 
\frac{|C|}{|G|}\frac{1}{n}\right|\leq M_B q^{-1/2}.
\]
\end{theorem}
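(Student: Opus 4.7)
The plan is to realize the counting problem geometrically: parametrize the short interval as an affine space $\AAA^{m+1}$, build a Galois cover whose Frobenius at $\FF_q$-points encodes both irreducibility of $h_a$ and the Frobenius class in $E$ of the prime $h_a$, and then invoke a higher-dimensional explicit Chebotarev (Deligne equidistribution with effective error) to extract the asymptotic with square-root cancellation.

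First I would identify $I(f,m)$ with $\AAA^{m+1}(\FF_q)$ via $(a_0,\dots,a_m)\mapsto h_a:=f+\sum_{i=0}^{m}a_i T^i$, so that $\pi_{C;q}(I(f,m);E)$ counts $a\in\AAA^{m+1}(\FF_q)$ for which $h_a$ is prime of degree $n$ with Frobenius $C$. Form the relative root scheme
\[
\mathcal{Z}=\{(a,t):h_a(t)=0\}\subset \AAA^{m+1}\times\AAA^1,
\]
which is finite flat of degree $n$ over $\AAA^{m+1}$. Restrict to the dense open $V\subset\AAA^{m+1}$ where $h_a$ is separable and its roots avoid the finite ramification locus of $E/\FF_q(T)$. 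Let $\tilde V\to V$ be the Galois closure of $\mathcal{Z}|_V\to V$; over $\tilde V$ there are $n$ canonical sections to $\AAA^1$, and pulling back the $G$-cover associated to $E$ along each of them and glueing gives a Galois cover $\mathcal W\to V$ whose Galois group embeds in the wreath product $G\wr S_n=G^n\rtimes S_n$.

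The main step, and the hardest part, will be to show that the geometric monodromy of $\mathcal W\to V_{\overline{\FF_q}}$ equals all of $G\wr S_n$. Surjectivity onto the $S_n$-quotient says a generic $h_a$ in the family has full symmetric Galois group over $\overline{\FF_q}(a_0,\dots,a_m)$, a Hilbert-irreducibility statement for the short-interval family that should follow from the constraint $m\ge 2$ (resp.\ $m\ge 3$ when $q$ is even) together with the bounds on $n$ and $|G|$. Surjectivity onto the kernel $G^n$ says that, once the action of Frobenius on the $n$ roots is fixed, the restrictions of the $E$-cover along the distinct roots behave independently; this is where the tame-ramification hypothesis on $E^{\mathrm{ab}}$ at the infinite prime is essential, as it controls the inertia at infinity in the compactified family and blocks the cyclotomic-type obstructions coming from the extensions $L_{n-m-1}$ discussed in Section~\ref{sec:ChebFunc}.

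Granted full monodromy, Deligne's equidistribution theorem gives, for every conjugacy class $\mathfrak{C}\subset G\wr S_n$,
\[
\#\{a\in V(\FF_q):\mathrm{Frob}_a\in\mathfrak{C}\}=\frac{|\mathfrak{C}|}{|G\wr S_n|}\,q^{m+1}+O_B\bigl(q^{m+1/2}\bigr).
\]
Now $h_a$ is irreducible iff $\mathrm{Frob}_a$ projects to an $n$-cycle in $S_n$, in which case its Frobenius in $E$ is read off as the cyclic product $g_1g_2\cdots g_n$ of the $G^n$-coordinates along the cycle. Summing over those $\mathfrak{C}$ whose $S_n$-part is an $n$-cycle and whose cyclic product lies in $C$ gives total density $|C|/(|G|n)$, and absorbing the contribution of $\AAA^{m+1}\setminus V$ (at most $O_B(q^m)$ points) into the error term yields the claimed bound after dividing by $q^{m+1}$.
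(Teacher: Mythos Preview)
Your proposal is correct and follows essentially the same route as the paper: the paper builds exactly the wreath-product cover you describe (its $C^n\to\AAA^n\to\AAA^n/S_n$ construction in \S\ref{sec:Frobenius}, restricted to the short-interval affine subspace), proves the geometric monodromy is all of $G\wr S_n$ (Proposition~\ref{general_computation}), applies an explicit higher-dimensional Chebotarev, and reads off the density $\tfrac{|C|}{n|G|}$ via the same cyclic-product computation you sketch. The only packaging difference is that the paper factors the argument through its general framework of $G$-factorization arithmetic functions (Theorem~\ref{thm:maintechnical}), from which Theorem~\ref{thm:chebotarevshort} follows as the one-line specialization $\psi=1_C$; your outline instead goes directly to the specific cover and the specific conjugacy-class count, which is equivalent.
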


It follows in particular that for any $\varepsilon>0$ we have 
\begin{equation}\label{eq:doublelimit}
\lim_{n\to \infty} \lim_{q\to \infty} \max_{f,E}\left|\frac{1}{q^{m+1}}\pi_{C;q}(I(f,m);E) -  \frac{|C|}{|G|}\frac{1}{n}\right| = 0,
\end{equation}
where $E$ runs over all $G$-Galois extensions of $\FF_q(T)$  of bounded genus that are tamely ramified at infinity and $f\in \FF_q[T]$ runs over all monic polynomials of degree $n$. Hence we have proved a version of Conjecture~\ref{conj:Cheb_SI} in the function field setting. 

It would be desirable to change the order of the limits in \eqref{eq:doublelimit}. As explained above, for $\varepsilon>1/2$ this follows from  the Riemann Hypothesis for curves. For  $\varepsilon\leq \frac12$, it is open and we know of no approach to attack it. 
A yet more challenging task is to fix $q$ and take $n\to \infty$, and also here the problem is open, and we know of no approach to attack it. 

Our method gives more general results, and may be applied for instance to problems about norms. In Theorem \ref{thm:norms} we count, in the large-$q$ limit, how many polynomials $g \in I(f,m)$ satisfy $(g)=\mathrm{Norm}_{E/\FF_q(T)}I$ for some ideal $I$ in $\mathcal{O}_E$. Our most general result is given in Theorem~\ref{thm:maintechnical}, for which the terminology of \S\ref{sec:GFact}--\ref{sec:wreath} is needed.

It would be interesting to generalize our results to a function field of a general curve in place of $\FF_q(T)$.

\section{Methods}
We outline our approach when $E$ is a geometric extension of  $\FF_q$, which, under the notation used in \eqref{eq:Cheb_FF}, means that $\nu=1$.  We introduce a general notion of $G$-factorization arithmetic functions (Definition~\ref{def:Gfact}), which are arithmetic functions on $\FF_q[T]$, whose value on a polynomial $f(T)$ depends only on the Frobenius at the prime factors of $f(T)$. These functions are closely related to Serre's Frobenian functions \cite{serre1975} and to the extensions by Odoni \cite{odoni1981,odoni1982} and Coleman \cite{coleman2001}.

Given a short interval, we relate such an arithmetic function $\psi$ to a class function $\psi'$ on a \emph{subgroup} of the wreath product $G \wr S_n$ using a  higher dimensional function field Chebotarev Density Theorem. The main property of this association is that the expected value of $\psi$ on the short  interval is asymptotically equal to the average of $\psi'$ on the subgroup, as $q \to \infty$ (Theorem~\ref{thm:maintechnical}). 
The main technical part of the work is to compute the subgroup: it equals to the wreath product $G \wr S_n$ itself. 

Applying the above to the indicator function of primes with Frobenius $C$ (Example~\ref{example:prime}) reduces Theorem~\ref{thm:chebotarevshort} to either a combinatorial computation in $G\wr S_n$  or the classical Chebotarev Density Theorem.

Finally, for the subgroup computation, we take an algebraic approach, using elementary group theory and Artin-Schreier and Kummer theories. Our methods are in the spirit of the works  \cite{cohen1980,bank2015a,bank2018} which assume $\genus(E)=0$ and $G$ cyclic. 

\section{\texorpdfstring{$G$}{G}-factorization arithmetic functions}\label{sec:GFact}
For a finite group $G$ we consider the space
\[
\hat{\Omega}_G = \{ \sigma  I : \sigma\in G,\ I\leq G\}
\]
of all cosets of subgroups. The group $G$ acts on $\hat{\Omega}_G$ by conjugation and we write
\[
\Omega_G = \hat{\Omega}_G/G
\]
for the set of conjugacy classes of cosets of subgroups.
If $I=1$ is the trivial subgroup, we identify $\sigma  I\in \hat{\Omega_G}$ with $\sigma$. So the image of $\sigma I$ in $\Omega_G$ is the conjugacy class $C=\{\tau^{-1} \sigma \tau : \tau\in G\}$ of $\sigma$.

We want to encode the combinatorial  data of degrees, multiplicities, and the Frobenius  at the prime factors of a polynomial.
A \textbf{$G$-factorization type} is a function
\begin{equation*}
\lambda\colon \mathbb{N} \times \mathbb{N} \times \Omega_G\to \ZZ_{\ge 0}
\end{equation*}
with finite support. We define $\Lambda=\Lambda_G$ to be  the set of all $G$-factorization types. For $\lambda \in \Lambda$ we let
\[
\begin{split}
\deg(\lambda) &= \sum_{d,e,\omega}\lambda(d,e,\omega) de,\\
\end{split}
\]
where the sum runs over $d,e\in \mathbb{N}$ and $\omega\in \Omega_G$. For a monic polynomial $f\in \FF_q[T]$  with prime factorization $f=P_1^{e_1} \cdots P_r^{e_r}$ and for a $G$-Galois extension $E/\FF_q(T)$  we define
\[
\lambda_{f;E/\FF_q(T)}(d,e,\omega) = \# \left\{ i : \deg P_i=d,\ e_i=e,\ \left(\frac{E/\FF_q(T)}{P_i}\right)=\omega\right\}.
\]
When there is no risk of confusion we simplify the notation and write $\lambda_f$ for $\lambda_{f;E/\FF_q(T)}$.
Obviously, we have that $\deg(f) = \deg(\lambda_f)$.
\begin{definition}\label{def:Gfact}
A \textbf{$G$-factorization arithmetic function} is a function on $G$-factorization types. We denote by 
\[
\Lambda^*  = \{\psi\colon \Lambda \to \CC\}
\]
the space of $G$-factorization arithmetic functions. 

Given a $G$-Galois extension $E/\FF_q(T)$, each  $\psi\in \Lambda^*$ induces an arithmetic function $\psi_{E/\FF_q(T)}$ on $\FF_q[T]$ by setting
\[
\psi_{E/\FF_q(T)}(f) = \psi(\lambda_{f;E/\FF_q(T)}),
\]
for monic $f\in \FF_q[T]$. By abuse of notation, $\psi_{E/\FF_q(T)}$ is also called $G$-factorization arithmetic function.
\end{definition}
Definition~\ref{def:Gfact} vastly extends some families of arithmetic functions -- see  \cite{rodgers2018,LBSFehm} for similar definitions in the cases $E=\FF_q(T)$ ($G=\{e\}$) and $E=\FF_q(\sqrt{-T})$ ($G=\ZZ/2\ZZ$).

The following example of a $G$-factorization arithmetic function is crucial for our main result.
\begin{example}\label{example:prime}
Fix a conjugacy class $C \subseteq G$. Consider the $G$-factorization arithmetic function
\[
1_{C}(\lambda) =
\begin{cases}
1, & \mbox{if }\lambda(d,e,\omega) > 0 \Rightarrow \omega = C \mbox{ and }d = \deg \lambda, \\
0, & \mbox{otherwise.}
\end{cases}
\]
For any $G$-Galois extension $E/\FF_q(T)$ and monic $f\in \FF_q[T]$ we have
\begin{equation*}
1_{C,E/\FF_q(T)}(f) = \begin{cases}
1, & \mbox{if $f$ is irreducible and } \left(\frac{E/\FF_q(T)}{f}\right)=C,\\
0,&  \mbox{otherwise}.
\end{cases}
\end{equation*}
\end{example}

\section{\texorpdfstring{$G$}{G}-factorization arithmetic functions on wreath products}
\label{sec:wreath}

Recall the construction of the \textbf{permutational wreath product}:
Let $S_n$ be the symmetric group on $X=\{1,2,\ldots,n\}$ (with left action $(\sigma,x)\mapsto \sigma.x$), let $G$ be a finite group,
and let
\[
 G^{X}:= \{ \xi\colon X \to G\}
\]
be the
group of functions from $X$ to $G$ with pointwise multiplication.
Then $S_n$ acts (from the right) on $G^{X}$ by
\[
\xi^\sigma(x) = \xi(\sigma.x), \qquad \sigma\in S_n, \ x\in X.
\]
The corresponding semidirect product
\[
G\wr S_n := G^X\rtimes S_n
\]
is  the  wreath product of $G$ and $S_n$. For the reader's convenience we recall that the multiplication is given by 
\begin{equation*}
(\xi_1,\sigma_1)(\xi_2,\sigma_2) = (\xi_1\xi_2^{\sigma_1^{-1}}, \sigma_1 \sigma_2), \qquad \xi_1, \xi_2 \in G^X, \ \sigma_1, \sigma_2 \in S_n.
\end{equation*}
The imprimitive action of $G\wr S_n$ on the set $G\times X$, given explicitly by
\begin{equation}\label{eq:imprimitiveaction}
(\xi,\sigma).(g,x) = (\xi(\sigma.x)g,\sigma.x), \qquad \xi\in G^X, \ \sigma\in S_n, \ g\in G,\ x\in X,
\end{equation}
makes $G\wr S_n$ into a transitive permutation group.

For $(\xi, \sigma)\in G\wr S_n$ we attach a $G$-factorization type: Let $\sigma = \sigma_1 \cdots \sigma_r$ be the factorization of $\sigma$ to disjoint cycles. We include the trivial cycles so that $\sum_{i=1}^r {\ord}(\sigma_i) = n$. For each $i=1,\ldots, r$, if we write $\sigma_i = (j_1 \ \cdots\ j_d)$, then we set $C_{(\xi,\sigma),\sigma_i}$ to be the conjugacy class in $G$ of the element
\[
\xi(j_d) \cdots \xi(j_1).
\] 
The conjugacy class $C_{(\xi,\sigma),\sigma_i}$ is well defined, since $\xi(j_{a}) \cdots \xi(j_1) \xi(j_d)\cdots \xi(j_{a+1})$ is conjugate to $\xi(j_d) \cdots \xi(j_1)$. Now we set 
\begin{equation}\label{eq_lambda}
\lambda_{(\xi,\sigma)}(d,e,\omega) =
\begin{cases}
0, & \mbox{if }e>1,\\
\# \{ i : {\ord}(\sigma_i) = d,\ C_{(\xi,\sigma),\sigma_i}  = w\}, & \mbox{if }e=1.
\end{cases}
\end{equation}
Any $\psi\in \Lambda^*$ induces a function $\psi_{G \wr S_n}\colon G \wr S_n \to \CC$ by
\[
\psi_{G\wr S_n}((\xi,\sigma)) = \psi(\lambda_{(\xi,\sigma)})
\]
and we refer to such functions on $G\wr S_n$ as $G$-factorization arithmetic functions as well. Below we show that the set of $G$-factorization arithmetic functions on $G\wr S_n$ actually coincides with the set of class functions.

\begin{example}\label{exa:1CGG}
Recall the $G$-factorization arithmetic function $1_C$ from Example~\ref{example:prime}. Then, for $(\xi,\sigma)\in G\wr S_n$ we have 
\[
1_C(\xi,\sigma ) = 
	\begin{cases}
		1,& \mbox{if $\sigma$ is an $n$-cycle and } C_{(\xi,\sigma),\sigma}=C,\\
		0,&\mbox{otherwise.}
		
	\end{cases}
\]
\end{example}

Next, we prove that conjugation in $G\wr S_n$ preserve the $G$-factorization type. Let $\tau\in S_n$ and identify it with $(1,\tau)\in G\wr S_n$. Then $\tau \sigma \tau^{-1} = \rho_1 \cdots \rho_r$ with $\rho_i=\tau\sigma_i\tau^{-1}$. If $\sigma_i = (j_1\ \cdots\ j_d)$, then $\rho_i = (\tau(j_1)\ \cdots \ \tau(j_d))$. Now, as
$\tau(\xi,\sigma)\tau^{-1} = (\xi^{\tau^{-1}} , \tau\sigma\tau^{-1})$
we have that 
\begin{equation}\label{eq:conj-S_n}
\xi^{\tau^{-1}}(\tau(j_d))\cdots \xi^{\tau^{-1}}(\tau(j_1)) = \xi(j_d)\cdots \xi(j_1) 
\end{equation}
and so $C_{(\xi,\sigma),\sigma_i} = C_{\tau(\xi,\sigma)\tau^{-1} ,\rho_i}$. We thus conclude that 
\[
\lambda_{(\xi,\rho)} = \lambda_{\tau (\xi,\rho) \tau^{-1}}. 
\]
Similarly, if $\eta\in G^X$ and we identify it with $(\eta,1)\in G\wr S_n$, then 
\begin{equation}
\label{eq-conj-eta}
\eta (\xi,\sigma) \eta^{-1} = (\eta\xi\eta^{-\sigma^{-1}},\sigma)
\end{equation} 
and we have 
\begin{equation*}
\begin{split}
&(\eta\xi\eta^{-\sigma^{-1}})(j_d) \cdots (\eta\xi\eta^{-\sigma^{-1}}(j_1)) \\
	&\qquad= \eta (j_d) \xi(j_d) \eta(j_{d-1})^{-1} \cdot \eta(j_{d-1})\cdots \eta(j_1)^{-1}\cdot \eta(j_1) \xi(j_1) \eta(j_d)^{-1} \\
	&\qquad=\eta(j_d) \xi(j_d)\cdots \xi(j_1) \eta(j_d)^{-1}.
\end{split}
\end{equation*}
Here we used that $\sigma_i^{-1} = (j_d\ \cdots \ j_1)$. In particular, $C_{(\xi,\sigma),\sigma_i} = C_{\eta(\xi,\sigma)\eta^{-1}, \sigma_i}$ and thus
\[
\lambda_{(\xi,\rho)} = \lambda_{\eta(\xi,\rho)\eta^{-1}}.
\]
We thus deduce that if $(\xi,\sigma)$ and $(\eta,\rho)$ are conjugate, then $\lambda_{(\xi,\rho)}=\lambda_{(\eta,\rho)}$.

The converse is also true. Indeed, $\lambda_{(\xi,\sigma)}=\lambda_{(\zeta,\rho)}$ implies that we have $r$ conjugacy classes $C_1, \ldots, C_r$ (possibly with repetitions) and factorization to disjoint cycles $\rho=\rho_1\cdots \rho_r$ and $\sigma = \sigma_1\cdots\sigma_r$ such that $\ord(\sigma_i)= \ord(\rho_i)$ and 
\[
C_{(\xi,\sigma),\sigma_i} = C_i  = C_{(\zeta,\rho),\rho_i}.
\]
Without loss of generality we may assume that $\sigma = \rho$ and $\sigma_i=\rho_i$ for all $i$ (indeed, conjugate by $\tau\in S_n$ such that $\tau\sigma_i\tau^{-1}=\rho_i$ for all $i$ and use \eqref{eq:conj-S_n}).
Thus, if $\sigma_i = (j_1 \ \cdots \ j_d)$, then 
\[
g\xi(j_d) \cdots \xi(j_1)g^{-1} = \zeta(j_d) \cdots \zeta(j_1) 
\]
for some $g \in G$. By \eqref{eq-conj-eta} it suffices to find $\eta\in G^X$ such that $\eta\xi \eta^{-\sigma^{-1}} = \zeta$. 
Defining 
\[
\eta(j_d) = g \quad \mbox{and} \quad \eta(j_{a}) = \zeta^{-1}(j_{a+1}) \eta(j_{a+1}) \xi(j_{a+1}), \ 1\leq a\leq d-1
\] 
on the orbits of $\sigma_i$, for each $\sigma_i$ gives the desired solution. 
We thus proved 

\begin{lemma}\label{Lem_lambda_conj}
	The elements $(\xi, \sigma)$, $(\zeta,\rho ) \in G \wr S_n$ are conjugate if and only if $\lambda_{(\xi,\sigma)} =\lambda_{(\zeta,\rho )}$. 
	
	In particular, every class function on $G\wr S_n$ may be realized as a $G$-factorization arithmetic function.
\end{lemma}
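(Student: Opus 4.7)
The plan is to prove both implications separately, and then deduce the class-function statement as a formal corollary. For the forward direction, I use that $G \wr S_n$ is generated by the ``permutation'' elements $(1,\tau)$ with $\tau \in S_n$ together with the ``function'' elements $(\eta, 1)$ with $\eta \in G^X$, so it suffices to verify invariance of $\lambda$ under conjugation by each kind. Conjugating $(\xi, \sigma)$ by $(1, \tau)$ permutes the disjoint cycles of $\sigma$ and relabels the arguments of $\xi$; a direct calculation shows that cycle lengths are preserved and the cycle product $\xi(j_d)\cdots\xi(j_1)$ over a cycle $(j_1\ \cdots\ j_d)$ is literally unchanged, since it is merely read along the relabeled cycle. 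Conjugating by $(\eta, 1)$ fixes the underlying permutation but conjugates each cycle product in $G$ by $\eta(j_d)$, so its $G$-conjugacy class is preserved.

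For the converse, suppose $\lambda_{(\xi,\sigma)} = \lambda_{(\zeta,\rho)}$. First I would conjugate by a well-chosen element of $S_n$ to reduce to the case $\sigma = \rho$, pairing up cycles of equal length and equal cycle-product conjugacy class. It then remains to construct $\eta \in G^X$ with $\eta (\xi,\sigma) \eta^{-1} = (\zeta, \sigma)$, which by \eqref{eq-conj-eta} amounts to solving the pointwise equation $\eta \xi \eta^{-\sigma^{-1}} = \zeta$. This equation decouples over the cycles of $\sigma$: on a single cycle $(j_1\ \cdots\ j_d)$ with $\sigma(j_a)=j_{a+1}$, the identity $\eta(j_a)\xi(j_a)\eta(j_{a-1})^{-1}=\zeta(j_a)$ lets me choose $\eta(j_d)$ freely in $G$ and then determine $\eta(j_{d-1}),\ldots,\eta(j_1)$ one index at a time. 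The hard part will be the closure condition after going around the cycle: it collapses to $\eta(j_d)\,\xi(j_d)\cdots\xi(j_1)\,\eta(j_d)^{-1} = \zeta(j_d)\cdots\zeta(j_1)$, which is solvable for $\eta(j_d)$ precisely when the two cycle products lie in the same $G$-conjugacy class, and this is exactly what the equality of types guarantees.

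The final assertion of the lemma then follows formally. By the equivalence just established, the assignment $(\xi,\sigma)\mapsto \lambda_{(\xi,\sigma)}$ separates the conjugacy classes of $G\wr S_n$, so any class function $\chi$ on $G\wr S_n$ may be written as $\chi = \psi_{G\wr S_n}$, where $\psi\in\Lambda^*$ is defined by $\psi(\lambda)=\chi((\xi,\sigma))$ for any (equivalently, every) $(\xi,\sigma)$ of type $\lambda$, and $\psi(\lambda)=0$ for types $\lambda$ that do not arise from $G\wr S_n$. The only delicate step in the whole argument is the cycle-by-cycle construction of $\eta$; everything else is bookkeeping.
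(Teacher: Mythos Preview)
Your proposal is correct and follows essentially the same route as the paper's proof: invariance of $\lambda$ is checked separately under conjugation by $(1,\tau)$ and by $(\eta,1)$, and for the converse you reduce to $\sigma=\rho$ via an $S_n$-conjugation matching cycles with equal length and cycle-product class, then solve $\eta\xi\eta^{-\sigma^{-1}}=\zeta$ cycle-by-cycle with the closure condition $\eta(j_d)\,\xi(j_d)\cdots\xi(j_1)\,\eta(j_d)^{-1}=\zeta(j_d)\cdots\zeta(j_1)$. The paper carries out exactly this construction, even writing the same recursion $\eta(j_a)=\zeta(j_{a+1})^{-1}\eta(j_{a+1})\xi(j_{a+1})$, and deduces the class-function statement in the same formal way.
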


We prove the following general theorem which connects the averages of a $G$-factorization arithmetic function on a short interval to the average on the wreath product. 
A piece of notation is needed: for a non-empty finite set $X$ and a function $\psi$ on $X$ we denote the mean value by 
\[
\left<\psi(f)\right>_{f\in X} := \frac{1}{\# X} \sum_{f\in X} \psi(f).
\]
\begin{theorem}\label{thm:maintechnical}
For every $B>0$ there exists a constant $M_B$ satisfying the following property. Let $q$ be a prime power. Let $n>m\geq 2$ if $q$ is odd and $n>m \geq 3$ otherwise. Let $G$ be a finite group and let $E/\FF_q(T)$ be a geometric $G$-extension. Assume that the infinite prime is tamely ramified in the fixed field $E^{ab}$ in $E$ of the commutator of $G$. Let $f_0\in \FF_q[T]$ be monic of degree $n$, and $\psi\in \Lambda^*$. Assume that $\genus(E), n,|G| \leq B$. Then 
\[
\left|\left<\psi_{E/\FF_q(T)}(f)\right>_{\deg(f-f_0)\leq m} - \left<\psi_{G\wr S_n}(\tau)\right>_{\tau\in G\wr S_n} \right| \leq M_B q^{-1/2} \max_{\deg(\lambda)=n} |\psi(\lambda)|.
\]
\end{theorem}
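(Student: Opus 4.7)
\textbf{Proof plan for Theorem~\ref{thm:maintechnical}.} The plan is to recast the averaging question as a geometric equidistribution problem on an affine space, apply a higher-dimensional function field Chebotarev density theorem (in the style of Deligne--Katz) to a carefully constructed Galois cover, and reduce the whole argument to computing the geometric Galois group of that cover, which I expect to be the entire wreath product $G\wr S_n$.

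First I would parametrize the interval geometrically. Identify $I(f_0,m)$ with the $\FF_q$-points of an affine space $V\cong\AAA^{m+1}_{\FF_q}$ whose coordinates are the bottom $m+1$ coefficients of a monic polynomial $F(T)$ of degree $n$ with top part matching $f_0$. On the dense open $U\subseteq V$ where $F$ is separable and coprime to the conductor of $E/\FF_q(T)$, construct an \'etale cover $W\to U$ whose fiber over a geometric point $f$ consists of ordered tuples $(\alpha_1,\dots,\alpha_n,\mathfrak{P}_1,\dots,\mathfrak{P}_n)$ with $f=\prod(T-\alpha_i)$ and $\mathfrak{P}_i$ a prime of $E$ above $T-\alpha_i$. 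The group $G\wr S_n$ acts on $W$ in the obvious way (with $S_n$ permuting the $\alpha_i$ and the $i$-th copy of $G$ acting on primes over $T-\alpha_i$), so the geometric Galois group of $W/U$ is a subgroup $H\le G\wr S_n$. A direct unraveling shows that for $f\in U(\FF_q)$ the conjugacy class of the arithmetic Frobenius $\mathrm{Frob}_f\in H$ in $G\wr S_n$ has $G$-factorization type equal to $\lambda_{f;E/\FF_q(T)}$; this is precisely the definition \eqref{eq_lambda}, which was tailored so that the cycle and $G$-class data match.

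Next I would apply the higher-dimensional function field Chebotarev theorem to $W/U$. For any class function $\phi$ on $H$ one gets
\[
\left|\left<\phi(\mathrm{Frob}_f)\right>_{f\in U(\FF_q)}-\left<\phi(h)\right>_{h\in H}\right|\ll C(U,W)\,q^{-1/2}\,\|\phi\|_\infty,
\]
where $C(U,W)$ is bounded by the sum of Betti numbers of $U$ with coefficients in the local systems attached to irreducible representations of $H$, and these bounds depend only on $n$, $|G|$, and $\genus(E)$. Under the assumption $\genus(E),n,|G|\le B$ this constant is $\le M_B$. The difference between averaging over $U(\FF_q)$ and over $I(f_0,m)=V(\FF_q)$ is controlled by the codimension-one locus $V\setminus U$, contributing $O(q^{-1})$. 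Specialising to $\phi=\psi_{G\wr S_n}|_H$ would yield the desired inequality \emph{if} one knew $H=G\wr S_n$.

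The main obstacle, consistent with the paper's own outline, is proving the group-theoretic identification $H=G\wr S_n$. I would attack this in two layers. Surjectivity onto $S_n$ amounts to Hilbert irreducibility for the universal polynomial with fixed top $n-m-1$ coefficients; the hypotheses $m\ge 2$ (respectively $m\ge 3$ in characteristic~$2$) are exactly what is needed to avoid degenerate low-dimensional strata and to rule out the extra quadratic obstruction coming from the discriminant in characteristic~$2$. To show next that the kernel $N=H\cap G^n$ is all of $G^n$, use $S_n$-equivariance: $N$ is an $S_n$-invariant subgroup of $G^n$, so it is determined by its projection onto any single coordinate together with the abelian quotient $N^{\mathrm{ab}}\to(G^{\mathrm{ab}})^n$. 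The one-coordinate surjection follows from the classical Chebotarev density theorem for $E/\FF_q(T)$ applied to suitable specializations inside $V$. The hard step is showing that $N^{\mathrm{ab}}$ fills the full abelianization: here one analyses the compositum $E^{\mathrm{ab}}\cdot\FF_q(V)$ using Kummer theory for the prime-to-$p$ part and Artin--Schreier theory for the $p$-part, and the tame ramification of $E^{\mathrm{ab}}$ at the infinite prime is exactly what decouples $E^{\mathrm{ab}}$ from the wildly ramified Artin--Schreier extensions naturally generated by the short-interval structure (the fields $L_{n-m-1}$ from the introduction). Once these linear disjointness statements are in hand, one concludes $H=G\wr S_n$, and the theorem follows from the equidistribution estimate above.
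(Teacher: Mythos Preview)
Your overall strategy matches the paper's closely: construct a $G\wr S_n$-cover of the parameter space of the short interval, identify the Frobenius at an $\FF_q$-point with the $G$-factorization type (this is the paper's Proposition~\ref{prop:frob}), apply an explicit higher-dimensional Chebotarev theorem, and reduce everything to showing the geometric Galois group is the full wreath product (the paper's Proposition~\ref{general_computation}). The abelian analysis via Kummer and Artin--Schreier theory, and the role of tame ramification of $E^{\mathrm{ab}}$ at infinity, are also exactly as in the paper.

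However, your group-theoretic reduction has a genuine gap. You assert that an $S_n$-invariant subgroup $N\le G^n$ is determined by its projection onto a single coordinate together with the map $N^{\mathrm{ab}}\to(G^{\mathrm{ab}})^n$. This is false when $G$ is perfect: take $G=A_5$ and let $N=\{(g,\dots,g):g\in G\}$ be the diagonal. Then $N$ is $S_n$-invariant, surjects onto every single coordinate, and both $N^{\mathrm{ab}}$ and $(G^{\mathrm{ab}})^n$ are trivial, yet $N\ne G^n$; indeed $N\times S_n$ sits inside $G\wr S_n$ and is not ruled out by your criteria. The paper repairs this by proving that $N$ surjects onto every \emph{two}-fold projection $G^n\to G^2$ (Lemma~\ref{computation_two-fold}), using a geometric connectedness argument: adjoin two roots $y_i,y_j$ to obtain a cover $\Upsilon\to\AAA^2$, and observe that the fiber product $\Upsilon\times_{\AAA^2}C^2$ is geometrically connected because the fibers of $\Upsilon\to\AAA^2$ are affine spaces. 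It then combines the two-fold surjection with the abelian case via an iterated Goursat argument: a subgroup of $G^n$ surjecting onto every pair of factors is either all of $G^n$ or is contained in a proper normal subgroup with abelian quotient. Your single-coordinate surjection (whether obtained by specialization or otherwise) is not enough, and the geometric input for the two-coordinate version is different from what you sketched.
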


We postpone the proof of Theorem~\ref{thm:maintechnical} to \S\ref{sec:pfthm:maintechnical}. 

\section{Applications of Theorem~\ref{thm:maintechnical}}\label{sec:apps}

From Theorem~\ref{thm:maintechnical} it follows immediately that in the large-$q$ limit, the average on a short interval is the same as on the `long interval' --- the set of all degree-$n$ monics 
\[
M_{n,q} = I(T^n,n-1).
\]

Moreover, Theorem~\ref{thm:maintechnical} reduces the computations of averages of arithmetic functions to combinatorics of group theory. 
This also works vice versa. 

\subsection{Proof of Theorem~\ref{thm:chebotarevshort}}\label{sec:proofoftheoremnorm}
We give two proofs to exemplify the ways to apply Theorem~\ref{thm:maintechnical}.
\medskip

First proof:
The assumptions allow us to apply Theorem~\ref{thm:maintechnical} with the $G$-factorization arithmetic function  $1_C$, and to get that the average on a short interval is the same as over a long interval. The latter is given by \eqref{eq:Cheb_FF}, as needed.
\medskip

Second proof:
The assumptions allow us to apply Theorem~\ref{thm:maintechnical} with the $G$-factorization arithmetic function  $1_C$, and to get that the average on a short interval is the same as on the wreath product. We compute the latter:
Using Example~\ref{exa:1CGG}, we find that $1_C(\xi,\sigma)\neq 0$ implies that $\sigma = (j_1\ \cdots \ j_n)$ is an $n$-cycle  and $\xi(j_n) \cdots \xi(j_1)\in C$. So we may choose $\xi(j_1), \ldots, \xi(j_{n-1})$ arbitrarily and then we have $|C|$ choices for $\xi(j_n)$. So
\[
\left< 1_{C}(\xi,\sigma) \right>_{(\xi,\sigma)\in G\wr S_n} = \frac{(n-1)!|G|^{n-1}|C|}{n!|G|^n} = \frac{1}{n}\frac{|C|}{|G|},
\]
as needed. \qed

\subsection{Norms in short intervals}\label{sec:normsinshort}
Here we discuss two $G$-factorization arithmetic functions related to norms, and our results on their mean value in short intervals. For a function field $E/\FF_q(T)$, we define the following arithmetic functions. For $f \in M_{n,q}$, we define \begin{align*}
b_{E/\FF_q(T)}(f) &=
\begin{cases}
1,& \mbox{if }\exists I \subseteq \mathcal{O}_E: (f)  =\mathrm{Norm}_{E/\FF_q(T)} (I),\\
0,& \mbox{otherwise,} \end{cases}\\
r_{E/\FF_q(T)}(f) &= \# \{ I \text{ ideal in }\mathcal{O}_E : \mathrm{Norm}_{E/\FF_q(T)}(I)=(f)\}.
\end{align*}

The number field versions of $r,b$ were studied extensively: Let $E/\QQ$ be a finite extension. 
Odoni \cite[Thm.~1]{odoni1975} computed the asymptotic of the mean value of $b_{E/\QQ}$. When $E/\QQ$ is Galois, the work of Ramachandra \cite{ramachandra1976some} gives the mean value of $b_{E/\QQ}$ in $[x,x+x^{\varepsilon}]$ for some $0<\varepsilon<1$.

Weber \cite{weber1896} computed the mean value of $r_{E/\QQ}$, and studied the error term. We refer to  Bourgain and Watt \cite[Thm.~2]{bourgain2017} for the state-of-the-art result on the error term when $E=\QQ(i)$, and to Lao \cite{lao2010} for more general $E$. 
These results in particular gives the expected asymptotics for the mean value of $r_{E/\QQ}$ in $[x,x+x^{\varepsilon}]$ for some $0<\varepsilon<1$.

In Appendix~\ref{app:1}, we prove a function field analogue of Odoni's result on the average of $b_{E/\FF_q(T)}$ in long intervals when $E/\FF_q(T)$ is Galois. This is to be done in the most general limit $q^n\to \infty$. Appendix~\ref{app:1} also treats $r_{E/\FF_q(T)}$ for which the rationality of the corresponding Dedekind zeta function gives a closed formula for the mean value.

The result to be presented is a computation of the mean values  in short intervals. 

\begin{theorem}\label{thm:norms}
For every $B>0$ there exists a constant $M_B$ satisfying the following property. Let $q$ be a prime power. Let $n>m\geq 2$ if $q$ is odd and $n>m \geq 3$ otherwise. Let $G$ be a finite group and let $E/\FF_q(T)$ be a geometric $G$-extension which has is tamely  ramified at the infinite prime. Let $f_0\in \FF_q[T]$ monic of degree $n$. Assume that $\genus(E), n,|G| \leq B$. Then 
\[
\left<b_{E/\FF_q(T)}(f)\right>_{\deg(f-f_0)\leq m} = 1+O_B( q^{-1/2}),
\]
\[
\left<r_{E/\FF_q(T)}(f)\right>_{\deg(f-f_0)\leq m} = \binom{n+\frac{1}{|G|}-1}{n} + O_{B}( q^{-1/2}).
\]
\end{theorem}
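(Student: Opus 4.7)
The plan is to apply Theorem~\ref{thm:maintechnical} separately to $b = b_{E/\FF_q(T)}$ and $r = r_{E/\FF_q(T)}$, thereby reducing each short-interval average to a mean over the wreath product $G\wr S_n$, and then to evaluate the two wreath-product sums via the classical cycle-index generating function for $S_n$.

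First I verify that $b$ and $r$ are $G$-factorization arithmetic functions in the sense of Definition~\ref{def:Gfact}. For monic $f = \prod_i P_i^{e_i}\in\FF_q[T]$, the ideal $(f)\mathcal{O}_E$ factors locally at each $P_i$ according to the class $\omega_i = (\tfrac{E/\FF_q(T)}{P_i})\in\Omega_G$, which records the inertia subgroup and Frobenius coset. Multiplicativity of the ideal norm combined with unique factorization in the Dedekind ring $\mathcal{O}_E$ forces every $I$ with $\mathrm{Norm}(I)=(f)$ to decompose uniquely as $I=\prod_i I_i$, with $I_i$ supported above $P_i$ and $\mathrm{Norm}(I_i)=(P_i)^{e_i}$. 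For an unramified $P_i$ with Frobenius of order $o_i$ and $g_i=|G|/o_i$ primes above, a stars-and-bars count gives the local number of choices as $\binom{e_i/o_i+g_i-1}{g_i-1}\cdot 1_{o_i\mid e_i}$; the ramified analogue is similar, using the inertia subgroup in place of the trivial subgroup. Hence $r(f)=\prod_i(\text{local count})$ and $b(f)=\prod_i 1_{\text{local count}\ge 1}$ depend only on $\lambda_f$, and on types of total degree $\le n\le B$ with $|G|\le B$ both are bounded by a constant $M_B'$. Applying Theorem~\ref{thm:maintechnical} then yields
\[
\langle\psi_{E/\FF_q(T)}\rangle_{\deg(f-f_0)\le m} = \langle\psi_{G\wr S_n}\rangle + O_B(q^{-1/2}), \qquad \psi\in\{b,r\}.
\]

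It remains to evaluate the wreath-product means. For $(\xi,\sigma)\in G\wr S_n$ with cycle decomposition $\sigma=\sigma_1\cdots\sigma_{r(\sigma)}$, the type $\lambda_{(\xi,\sigma)}$ is supported on $e=1$, and each cycle $\sigma_i$ of length $d_i$ yields a local ``prime'' whose Frobenius class $C_i$ is that of $\xi(j_{d_i})\cdots\xi(j_1)\in G$. At $e=1$ the divisibility $o_i\mid 1$ forces $o_i=1$, i.e., $C_i=\{1\}$; when this holds the local count collapses to $\binom{|G|}{|G|-1}=|G|$. Therefore
\[
b_{G\wr S_n}(\xi,\sigma)=\prod_i 1_{C_i=\{1\}}, \qquad r_{G\wr S_n}(\xi,\sigma)=|G|^{r(\sigma)}\prod_i 1_{C_i=\{1\}}.
\]
Fixing $\sigma$, the Frobenius elements on distinct cycles are independent and each uniform on $G$ (one free coordinate of $\xi$ per cycle), so the probability that all $C_i=\{1\}$ equals $|G|^{-r(\sigma)}$. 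Consequently
\[
\langle r_{G\wr S_n}\rangle = \frac{1}{n!}\sum_{\sigma\in S_n}|G|^{r(\sigma)}\cdot|G|^{-r(\sigma)} = 1,
\]
while the classical identity $\sum_{\sigma\in S_n}x^{r(\sigma)}=x(x+1)\cdots(x+n-1)$ at $x=1/|G|$ gives
\[
\langle b_{G\wr S_n}\rangle = \frac{1}{n!}\sum_{\sigma\in S_n}|G|^{-r(\sigma)} = \frac{(1/|G|)_n}{n!} = \binom{n+1/|G|-1}{n}.
\]
Combining with the previous display produces the two short-interval asymptotics $\binom{n+1/|G|-1}{n}+O_B(q^{-1/2})$ and $1+O_B(q^{-1/2})$ claimed in the theorem.

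The main technical point is handling the local description of $b$ and $r$ at the $O_B(1)$ ramified primes of $E/\FF_q(T)$, where the generic unramified formulas must be adjusted using the inertia data recorded by $\omega$; these contributions are uniformly bounded in $q$ and are absorbed by the error term from Theorem~\ref{thm:maintechnical}. Once this local reduction is in place the entire computation reduces to the single generating-function identity $\sum_\sigma x^{r(\sigma)}=(x)_n$.
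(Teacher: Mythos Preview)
Your approach is essentially the paper's: realize $b$ and $r$ as $G$-factorization arithmetic functions, apply Theorem~\ref{thm:maintechnical}, and then compute the two wreath-product means via the cycle-count identity $\sum_{\sigma\in S_n}x^{r(\sigma)}=x(x+1)\cdots(x+n-1)$. The paper packages both computations at once by evaluating $\langle r^s_{G\wr S_n}\rangle=\binom{n+|G|^{s-1}-1}{n}$ for arbitrary $s$ (via the exponential formula) and specializing to $s=0,1$; your direct argument is the same calculation.

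Two remarks. First, your computation gives $\langle b_{G\wr S_n}\rangle=\binom{n+1/|G|-1}{n}$ and $\langle r_{G\wr S_n}\rangle=1$, which is also exactly what the paper's proof produces; note that this is the \emph{reverse} of the labeling in the theorem statement (the two displayed formulas there are transposed --- the appendix confirms the assignment $b\leadsto\binom{n+1/|G|-1}{n}$, $r\leadsto 1$). Second, your last paragraph overstates the difficulty at ramified primes: the space $\Omega_G$ consists of conjugacy classes of cosets $\sigma I$ of \emph{arbitrary} subgroups, so the inertia data are already built into $\lambda_f$, and $b,r$ are honest $G$-factorization functions everywhere (this is Example~\ref{example:b}); no separate error-term bookkeeping is needed.
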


To see how Theorem~\ref{thm:norms} is deduced from Theorem~\ref{thm:maintechnical} we need to express $r,b$ as $G$-factorization arithmetic functions (Example~\ref{example:b}) and to compute the mean value on the wreath product, or alternatively apply the results from Appendix~\ref{app:1}.

Let $E/\FF_q(T)$ be a Galois extension. Given a prime polynomial $P \in \FF_q[T]$, we denote by $g(P;E)$, $f(P;E)$ and $e(P;E)$ the number of distinct primes in $E$ lying above $P$, the inertia degree of $P$ in $E$ and the ramification index of $P$, respectively. 
\begin{lemma}\label{lem:bcriteria}
Let $E/\FF_q(T)$ be a geometric $G$-extension. 
\begin{enumerate}
\item The functions $b_{E/\FF_q(T)}$ and $r_{E/\FF_q(T)}$ are multiplicative.
\item Let $f \in M_{n,q}$ with prime factorization $f = \prod_{i=1}^{k} P_i^{a_i}$. Then 
\begin{equation}\label{eq:bE}
b_{E/\FF_q(T)}(f)=\begin{cases}
1, & \mbox{if $f(P_i;E) \mid a_i$  for all $i$},\\
0, &\mbox{otherwise},
\end{cases}
\end{equation}
and if we put $b_i=a_i/f(P_i;E)$ and $g_i = g(P_i;E)$, then we have  
\begin{equation}\label{eq:rE}
r_{E/\FF_q(T)}(f) = b_{E/\FF_q(T)} (f)\cdot \prod_{i=1}^{k}  \binom{b_i+g_i-1}{g_i-1} .
\end{equation}
\end{enumerate}
\end{lemma}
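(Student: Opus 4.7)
The proof proceeds by reducing to prime powers via multiplicativity and then invoking the uniform decomposition $P\mathcal{O}_E = (\mathfrak{P}_1\cdots\mathfrak{P}_g)^e$ available in a Galois extension. Both ingredients rest on the multiplicativity of the ideal norm and unique factorization of ideals in the Dedekind domain $\mathcal{O}_E$.

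For part (1), fix coprime monic $f,g \in \FF_q[T]$ and let $S_f$ (resp. $S_g$) be the set of primes of $\mathcal{O}_E$ lying above some prime factor of $f$ (resp. of $g$). Coprimality of $f$ and $g$ in $\FF_q[T]$ makes $S_f$ and $S_g$ disjoint, so any integral ideal $K$ with $\mathrm{Norm}_{E/\FF_q(T)}(K) = (fg)$ factors uniquely as $K = IJ$, with $I$ supported on $S_f$ and $J$ on $S_g$. Multiplicativity of the ideal norm together with unique factorization in $\FF_q[T]$ then force $\mathrm{Norm}_{E/\FF_q(T)}(I) = (f)$ and $\mathrm{Norm}_{E/\FF_q(T)}(J) = (g)$, and conversely any such $(I,J)$ produces an ideal $IJ$ with norm $(fg)$. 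This bijection immediately gives $r_{E/\FF_q(T)}(fg) = r_{E/\FF_q(T)}(f) \cdot r_{E/\FF_q(T)}(g)$ and $b_{E/\FF_q(T)}(fg) = b_{E/\FF_q(T)}(f) \cdot b_{E/\FF_q(T)}(g)$.

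For part (2), by multiplicativity it suffices to evaluate $b_{E/\FF_q(T)}(P^a)$ and $r_{E/\FF_q(T)}(P^a)$ on a prime power. Using the Galois hypothesis, write $P\mathcal{O}_E = (\mathfrak{P}_1 \cdots \mathfrak{P}_g)^e$ with common residue degree $f = f(P;E)$, so that $\mathrm{Norm}_{E/\FF_q(T)}(\mathfrak{P}_j) = (P^f)$ for every $j$. Any integral ideal supported on $\{\mathfrak{P}_1,\dots,\mathfrak{P}_g\}$ has the form $I = \prod_{j=1}^g \mathfrak{P}_j^{c_j}$ with $c_j \geq 0$, and then $\mathrm{Norm}_{E/\FF_q(T)}(I) = (P^{f \sum_j c_j})$. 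Hence $\mathrm{Norm}_{E/\FF_q(T)}(I) = (P^a)$ is solvable if and only if $f \mid a$, which yields \eqref{eq:bE}; when $f \mid a$, counting non-negative integer solutions of $\sum_{j=1}^g c_j = a/f$ by stars-and-bars gives exactly $\binom{a/f + g - 1}{g - 1}$ such ideals, which produces \eqref{eq:rE} after taking the product over $i$.

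The argument is essentially formal bookkeeping, so no serious obstacle arises. The one point that deserves care is invoking the Galois hypothesis to guarantee the uniform $(e,f,g)$ data across all primes above $P$; without it, one would need to track per-prime residue degrees and the clean formula would fail. One should also verify that only integral ideals are being counted, which is automatic here since we require each exponent $c_j \geq 0$.
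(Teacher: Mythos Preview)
Your proof is correct and follows essentially the same approach as the paper: both use unique factorization in $\mathcal{O}_E$ together with multiplicativity of the ideal norm to split the problem into prime-power pieces, invoke the uniform $(e,f,g)$ data from the Galois hypothesis to compute $\mathrm{Norm}_{E/\FF_q(T)}(\mathfrak{P}_j)=(P)^{f(P;E)}$, and finish with a stars-and-bars count. The only cosmetic difference is that the paper establishes \eqref{eq:bE} first via a direct semigroup argument (the image of the norm is generated by the $(P)^{f(P;E)}$) and then deduces multiplicativity of $b$, whereas you prove multiplicativity of both functions first and then specialize to prime powers; the content is the same.
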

\begin{proof}
Let $P$ be a prime polynomial and let $\mathfrak{P}$ be a prime ideal of $\mathcal{O}_E$ lying above $P$. Then 
\[
\mathrm{Norm}_{E/\FF_q(T)} \mathfrak{P} = (P)^{f(P;E)}.
\]
By multiplicativity of the norm map and by unique factorization in $\mathcal{O}_E$, it follows that the image of $\mathrm{Norm}_{E/\FF_q(T)}$ on the non-zero ideals in $\mathcal{O}_E$ is the semigroup generated by $\{ (P)^{f(P;E)} \}_{P \in \mathcal{P}_q}$, which establishes \eqref{eq:bE}. 
It now immediately follows that $b_{E/\FF_q(T)}$ is multiplicative. 

If $f_1$ and $f_2$ are relatively prime polynomials, then from unique factorization of ideals in $\mathcal{O}_E$, every ideal $I$ of $\mathcal{O}_E$ with ${\rm Norm}_{E/\FF_q(T)} I = (f_1 f_2)$ has a unique factorization $I=I_1I_2$, with ${\rm Norm}_{E/\FF_q(T)} I_j = (f_j)$. Indeed, if $I=\prod \mathfrak{P}_i^{a_i}$ take $I_j$ be the product of $\mathfrak{P}_i^{a_i}$ with $\mathfrak{P}_i\mid f_j$.
Thus, 
\[
r_{E/\FF_q(T)} (f_1f_2) 
	= \sum_{
		\substack{
			{\rm Norm}_{E/\FF_q(T)} I = (f_1f_2)
		} 
	  }
		1 = \sum_{\substack {{\rm Norm}_{E/\FF_q(T)} I_1 = (f_1) \\ {\rm Norm}_{E/\FF_q(T)}I_2= (f_2)}}1 = r_{E/\FF_q(T)}(f_1) r_{E/\FF_q(T)}(f_2).
\]
This implies that $r_{E/\FF_q(T)}$ is multiplicative. In particular, it suffices to prove  \eqref{eq:rE} for $f=P^a$ a prime power.

If $f(P;E)\nmid a$, then $b_{E/\FF_q(T)}(P^a)=0$, hence also $r_{E/\FF_q(T)}(P^a)=0$. Assume now that $f(P;E)\mid a$ and let $b=a/f(P;E)$.
Let $\mathfrak{P}_1,\ldots,\mathfrak{P}_{g}$ be the primes of $\mathcal{O}_E$ lying above $P$. Since ${\rm Norm}_{E/\FF_q(T)} \mathfrak{P}_j = P^{f(P;E)}$, the solutions to $\mathrm{Norm}_{E/\FF_q(T)}I = P^{a}$, are of the form $I=\prod_{j=1}^{g} \mathfrak{P}_j^{c_j}$ with $c_j\geq 0$ and $\sum_{j=1}^{g} c_j = b$. As there are $\binom{b+g-1}{g-1}$ many such sequences of $c_j$, the proof is done. 
\end{proof}
Lemma \ref{lem:bcriteria} allows us to realize $b_{E/\FF_q(T)}$, $r_{E/\FF_q(T)}$ as $G$-factorization arithmetic functions.
\begin{example}\label{example:b}
Let $\omega \in \Omega_G$, and let $\Sigma \in \omega$. So $\Sigma$ is a coset of a subgroup of $G$, say $\Sigma = \sigma I$. Let $e_{\omega} = |I|$, $f_{\omega} = [ \left< \sigma, I\right> : I]$, and $g_w = |G|/e_wf_w$. Now we define the $G$-factorization arithmetic functions
\begin{equation}\label{eq:defbr}
\begin{split}
b(\lambda) &=
\begin{cases}
1, & \mbox{if }\lambda(d,a,\omega) > 0 \Rightarrow f_{\omega} \mid a, \\
0, & \mbox{otherwise.}
\end{cases}\\
r(\lambda) &= b(\lambda) \cdot \prod_{(d,a,w)} \binom{a/f_w + g_w-1}{g_w-1}^{\lambda(d,a,w)}
\end{split}
\end{equation}

Let  $E/\FF_q(T)$ be a geometric $G$-Galois extension. Then
\begin{equation}\label{eq:b-is-factorization-type}
b_{E/\FF_q(T)}(f) = b(\lambda_{f;E/\FF_q(T)}) \quad \mbox{and} \quad r_{E/\FF_q(T)}(f) = r(\lambda_{f;E/\FF_q(T)}).
\end{equation}
Indeed, by \eqref{eq:bE} and \eqref{eq:rE}, it suffices to note that $w = \left(\frac{E/\FF_q(T)}{P}\right)$, then $e_w=e(P;E)$, $f_w=f(P;E)$, and $g_w = g(P;E)$.
\end{example}

\begin{proof}[Proof of Theorem~\ref{thm:norms}]
By Theorem~\ref{thm:maintechnical}, it suffices to compute the average of the $G$-factorization arithmetic functions $b$ and $r$ given in \eqref{eq:b-is-factorization-type} on the group $G\wr S_n$. For brevity we compute them together by computing the average of  $r^s$ for any $s \in \CC$ (and noting that $r=r^1$ and $b=r^0$). Put $N=|G|$ and let $s \in \CC$. We show that 
\[
\left< r^s_{G\wr S_n}(\xi,\sigma) \right>_{(\xi,\sigma)\in G\wr S_n} = \binom{n+N^{s-1}-1}{n}.
\]

Let $\sigma = \sigma_1\cdots \sigma_r$ be the factorization of $\sigma\in S_n$ to disjoint cycles and let $\xi\in G^{n}$. Recall that if we write $\sigma_i= (j_1\ \cdots \ j_\ell)$, then  $C_{(\xi,\sigma),\sigma_i}$ is defined to be the conjugacy class of the element
\[
 \xi(j_{\ell}) \cdots \xi(j_1).
\]

Let $d,a\geq 1$ and $\omega\in \Omega_G$ with $\lambda_{(\xi,\sigma)}(d,a,\omega)>0$. Then $a=1$ and $\omega=C_{(\xi,\sigma),\sigma_i}$ for some $i$. In particular, $e_\omega=1$, and thus $f_{\omega} = 1$ if and only if $C_{(\xi,\sigma),\sigma_i}=1$, where  $e_\omega$ and $f_{\omega}$ are as defined in Example~\ref{example:b}.

By \eqref{eq:defbr}, we have that  $r^s_{G\wr S_n}(\xi,\sigma)\neq 0$ if and only if $f_\omega\mid a$ for all $(d,a,\omega)$ with $\lambda_{(\xi,\sigma)}(d,a,\omega)>0$. So if $r^{s}_{G\wr S_n}(\xi, \sigma)\neq 0$, then $a=1$, hence $f_{\omega}=1$, and so $C_{(\xi,\sigma),\sigma_i} =1$, for all $i$. As $g_\omega = \frac{N}{e_\omega f_\omega}=N$, and so $\binom{a/e+g-1}{g-1}=N$, we deduce from \eqref{eq:defbr} that 
\begin{equation*}
r^s_{G\wr S_n}(\xi,\sigma) = \begin{cases} \prod_{(d,a,\omega):\lambda_{(\xi,\sigma)}(d,a,\omega)>0} N^{s\lambda_{(\xi,\sigma)}(d,a,\omega)}, & \mbox{if $C_{(\xi,\sigma),\sigma_i}=1$ for all $i$,} \\ 0, & \mbox{otherwise.} \end{cases}
\end{equation*}
Put
\begin{equation*}
X_n = \{ (\xi,\sigma) \in G \wr S_n: C_{(\xi,\sigma),\sigma_i} = 1, \, \forall i\},
\end{equation*}
so that 
\begin{equation}\label{eq:meanwr1}
\left<
r^s_{G\wr S_n}(\xi,\sigma)
\right>_{(\xi,\sigma)\in G\wr S_n} = \frac{\sum_{(\xi,\sigma) \in X_n} (N^s)^{r(\sigma)}}{\#G\wr S_n},
\end{equation}
where $r(\sigma)$ is the number of cycles in $\sigma$. For a fixed $\sigma\in S_n$ with a factorization $\sigma=\sigma_1 \ldots \sigma_{r}$ to disjoint cycles, we have
\begin{equation}\label{eq:meanwr2}
\sum_{\xi \in G^{n}:  (\xi,\sigma) \in X_n} (N^s)^{r(\sigma)} = (N^{s})^{r} N^{n-r} = N^n \cdot (N^{s-1})^r,
\end{equation}
since if $\sigma_i=(j_1 \ \ldots\ j_d)$, then $\xi(j_1), \ldots, \xi(j_{d-1})$ can be chosen arbitrarily and $\xi(j_d)$ must be equal to $\prod_{k=1}^{d-1}\xi(j_k)^{-1}$, so we lose one power of $N$ for each orbit. Plugging  \eqref{eq:meanwr2} in \eqref{eq:meanwr1}, we find that 
\begin{equation}\label{eq:meanwr3}
\left<
r^s_{G\wr S_n}(\xi,\sigma)
\right>_{(\xi,\sigma)\in G\wr S_n} = \frac{\sum_{\sigma \in S_n}  (N^{s-1})^{r(\sigma)}}{\#S_n}.
\end{equation}

We apply the exponential formula for permutations \cite[Cor.~5.1.9]{stanley1999} with $f(i)= N^{s-1}$ the constant function and $h$ defined by $h(0)=1$ and 
\[
h(i) = \sum_{\sigma\in S_i} (N^{s-1})^{r(\sigma)}.
\]
Then the formula gives that 
\[
E(x):= \sum_{i=0}^{\infty} h(i)\frac{x^i}{i!} = \exp(\sum_{i\geq 1} N^{s-1}\frac{x^i}{i}). 
\]
As $\sum_{i \ge 1} x^i/i=-\ln(1-x)$, we can simplify the right hand side using the binomial series to get that
\[
E(x) = (1-x)^{-N^{s-1}} = \sum_{i \ge 0} (-1)^i  \binom{-N^{s-1}}{i}x^i.
\]
In particular, by \eqref{eq:meanwr3} we have 
\begin{equation*}
\left<
r^s_{G\wr S_n}(\xi,\sigma)
\right>_{(\xi,\sigma)\in G\wr S_n} = \frac{h(n)}{n!} =   (-1)^n  \binom{-N^{s-1}}{n}  = \binom{n+N^{s-1}-1}{n},
\end{equation*}
as needed.
\end{proof}

\section{Galois Theory}\label{GT}
\subsection{\texorpdfstring{$G$}{G}-factorization arithmetic functions and the Frobenius automorphism}
\label{sec:Frobenius}
Let $\psi$ be a $G$-factorization arithmetic function and $E/\FF_q(T)$ a $G$-Galois geometric extension. The goal of this section is, for a given $a=(a_0,\ldots, a_{n-1})\in \FF_q^{n}$, to naturally construct an element $\phi_a\in G\wr S_n$ such that 
\begin{equation}\label{eq:Garith-fun-Frob}
\psi_{E/\FF_q(T)}(T^n+a_{n-1}T^{n-1}+\cdots + a_0) = \psi_{G\wr S_n}(\phi_a).
\end{equation}
We start with a general construction which  we later specialize to our setting.
Let $F$ be a field and $\pi\colon C\to \AAA^1_{F}$ a branched covering of smooth geometrically connected $F$-curves with function field extension $E/F(T)$.
Assume that $E/F(T)$ is Galois with Galois group $G$. 

This gives rise to the following cover of varieties with corresponding function fields
\begin{equation}\label{eq:GeoCon}
\xymatrix{
C^n\ar[d]^{\pi^n} && E_1\cdots E_n\ar@{-}[d]\\
\AAA^n_{F}\ar[d]^{s}&&F(Y_1,\ldots, Y_n)\ar@{-}[d]\\
\AAA^{n}_{F} = \AAA^n/S_n && F(A_0,\ldots, A_n).
}
\end{equation}
Here $S_n$ acts on $\AAA^n$ by permuting the coordinates: if $(Y_1,\ldots, Y_n)$ are the coordinates of $\AAA^n$ and $(A_0,\ldots, A_{n-1})$ of $\AAA^n=\AAA^n/S_n$, then the  map $s$ is given by
\[
A_0 = (-1)^nY_1\cdots Y_n\quad,\quad  \ldots\quad,\quad A_{n-1} = -(Y_1+\ldots+Y_n).
\]
Also, $E_i$ is the function field of the $i$-th copy of $C$ in $C^n$, in particular, for every $i$ there exists an isomorphism
\begin{equation}\label{eq:isomorphisms}
\varphi_i \colon E_i\to E
\end{equation}
with $\varphi_i(Y_i)=T$ that fixes $F$. Put $\varphi_{i,j}\colon E_i\to E_j$ to be $\varphi_j^{-1}\circ \varphi_{i}$.
Let $D_1(T) \FF_q[T]$ be the discriminant ideal of $\pi$ and $D_2(A_0,\ldots, A_{n-1}) = \disc_T(T^n + A_{n-1}T^{n-1} + \ldots + A_0)$ and put
\begin{equation}\label{def:D}
D(A_0,\ldots, A_{n-1} ) = D_2(A_0,\ldots, A_{n-1})\prod_{i} D_1(Y_i) \in F[A_0,\ldots, A_{n-1}]
\end{equation}
which is a non-zero polynomial in the $A_i$-s. Then, for a point $a\in \AAA^{n}(F)$ we have
\begin{equation}\label{property1:D}
D(a)\neq 0 \Longrightarrow \mbox{ $a$ is unramified in $C^n$}.
\end{equation}
If we write $f(T) = T^n+a_{n-1}T^{n-1}+\cdots +a_0$, then the condition $D(a)\neq 0$ is equivalent to $f$ being a separable polynomial that does not vanish on the branch points of $\pi$ which are exactly the roots of $D_1$. The Riemann-Hurwitz formula (see e.g.\ \cite[Thm.~3.6.1]{fried2005}) gives that $\deg D_1 \ll \genus(C)+|G|$. On the other hand, $\deg D_2\ll n$. So, if  $B\geq \max\{\genus(C),|G|,n\}$ then $\deg D$ is bounded in terms of $B$.

The extension $E_1\cdots E_n/F(A_1,\ldots, A_n)$ is a Galois extension with Galois group isomorphic to $G\wr S_n$.
More explicitly, the action of an element  $(\xi,\sigma) \in G\wr S_n$ on $E_1\cdots  E_n$ is given by
\begin{equation}\label{eq:action}
\begin{split}
(\xi,\sigma). e_i &= \xi(\sigma(i))(\varphi_{i,\sigma(i)}(e_i)), \qquad e_i\in E_i.
\end{split}
\end{equation}
This is compatible with the imprimitive action \eqref{eq:imprimitiveaction}.

If $F=\FF_q$ is a finite field, then any point $a\in \AAA^n(F)$  with $D(a)\neq 0$ induces a Frobenius conjugacy class $\phi_a \subseteq G\wr S_n$, which is the higher dimensional version of \eqref{eq:FrobatP} and is defined similarly. Now we can prove \eqref{eq:Garith-fun-Frob}:

\begin{proposition}\label{prop:frob}
Let $F = \FF_q$, let $f(X) = T^n+a_{n-1}T^{n-1} + \cdots + a_0\in \FF_q[T]$ such that the point $a=(a_0,\ldots, a_{n-1})$ is unramified in $C^n$ and let $\phi_a \subseteq G\wr S_n$ be the Frobenius conjugacy class. Then \eqref{eq:Garith-fun-Frob} holds for every $\psi\in \Lambda^*$ .
\end{proposition}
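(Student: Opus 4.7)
My plan is to reduce the claim to the equality of $G$-factorization types
$$\lambda_{f;E/\FF_q(T)}=\lambda_{\phi_a},$$
which is well-defined on the conjugacy class $\phi_a$ by Lemma~\ref{Lem_lambda_conj} and which, by the very definition of the two sides, gives $\psi_{E/\FF_q(T)}(f)=\psi_{G\wr S_n}(\phi_a)$ for every $\psi\in\Lambda^*$. The hypothesis $D(a)\neq 0$ immediately handles the multiplicity parameter: the factor $D_2(a)=\disc(f)$ in~\eqref{def:D} being nonzero makes $f$ separable, while the factors $D_1(y_i)$ being nonzero force every root $y_i$ of $f$ to avoid the branch locus of $\pi$, so every prime factor of $f$ is unramified in $E$. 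Thus both $\lambda_{f;E/\FF_q(T)}$ and $\lambda_{(\xi,\sigma)}$ are supported on triples with $e=1$, matching the convention in~\eqref{eq_lambda}.

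The core step is to compute $\phi_a$ geometrically. I would choose a geometric point $c=(c_1,\ldots,c_n)\in C^n(\overline{\FF_q})$ in the fiber above $a$, so that $y_i=\pi(c_i)$ is an ordering of the roots of $f$ in $\overline{\FF_q}$. The Galois group $G\wr S_n$ acts simply transitively on this fiber, so there is a unique $(\xi,\sigma)\in G\wr S_n$ with $(\xi,\sigma).c=\mathrm{Frob}_q(c)$, and by construction this is a representative of $\phi_a$. Reading off this element via the imprimitive action~\eqref{eq:imprimitiveaction} shows that $\sigma$ is the permutation determined by $y_i^q=y_{\sigma(i)}$, while each $\xi(i)$ is determined by the relative position of $\mathrm{Frob}_q(c_i)$ with respect to $c_{\sigma(i)}$ in the $G$-torsor above $y_{\sigma(i)}$. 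In particular, the cycles of $\sigma$ are exactly the Frobenius orbits on the roots of $f$, so they correspond bijectively to the irreducible factors $P_k$ of $f$, with cycle length equal to $\deg P_k$.

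Finally, fix a cycle $\sigma_k=(j_1\,\cdots\,j_d)$ of $\sigma$ corresponding to an irreducible factor $P_k$ of degree $d$. Iterating the relation defining $\xi$, and using that $G$ is defined over $\FF_q$ so that $\mathrm{Frob}_q$ commutes with the $G$-action on $C$, one finds $\mathrm{Frob}_q^d(c_{j_1})=g\cdot c_{j_1}$, where $g$ is the product of the $\xi(j_\ell)$'s in the cyclic order dictated by $\sigma_k$. Since $\mathrm{Frob}_q^d$ fixes $y_{j_1}$ (because $|P_k|=q^d$), the element $g\in G$ is precisely the decomposition Frobenius at the prime of $E$ singled out by the geometric point $c_{j_1}$, so its conjugacy class is $\left(\frac{E/\FF_q(T)}{P_k}\right)$. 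Comparing with the definition in~\eqref{eq_lambda} of $C_{(\xi,\sigma),\sigma_k}$ as the class of $\xi(j_d)\cdots\xi(j_1)$ then yields $\lambda_{f;E/\FF_q(T)}=\lambda_{\phi_a}$, as required. The main obstacle I anticipate is purely notational bookkeeping: the conventions for~\eqref{eq:action}, the direction of the isomorphisms $\varphi_{i,j}$, and the order of factors in $C_{(\xi,\sigma),\sigma_k}$ must all be aligned carefully so that the iterated Frobenius produces exactly $\xi(j_d)\cdots\xi(j_1)$, and not a reversed or inverted variant.
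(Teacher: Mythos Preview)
Your proposal is correct and follows essentially the same approach as the paper: both reduce to the equality $\lambda_{f;E/\FF_q(T)}=\lambda_{\phi_a}$ by lifting $a$ to a geometric point of $C^n$, identifying the $S_n$-component of the Frobenius with the permutation induced on the roots of $f$, and then iterating along each cycle to recognise the cycle-product of the $\xi$-values as the Frobenius at the corresponding irreducible factor of $f$. The only cosmetic difference is that the paper phrases the iterated-Frobenius computation via the action~\eqref{eq:action} on rational functions $h\in E_{i,1}$ rather than on geometric points of the fibre, but this is the same calculation, and the bookkeeping concern you flag (the cyclic order of the factors $\xi(j_\ell)$) is indeed the only delicate point in either version.
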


\begin{proof}
To ease notation we identify each of the $E_i$ with $E$ via the map $\phi_i$.
Let $f = P_1 \cdots P_r$ be the prime factorization of $f$ with $P_i$ monic irreducible of degree $d_i$. For each $i=1,\ldots,r$, let $\alpha_{i,1}\in \AAA^1(\overline{\FF}_q)$ be a root of $P_i$  and  $\beta_{i,1}\in C(\overline{\FF}_q)$ with $\alpha_{i,1}=\pi(\beta_{i,1})$ and let $\alpha_{i,j} = \alpha_{i,1}^{q^{j-1}}$ be the other roots, and respectively $\beta_{i,j} = \beta_{i,j}^{q^{j-1}}$, $j=1,\ldots, d_i-1$. We replace the indices of $C^n$ and of the middle $\AAA^n$ in \eqref{eq:GeoCon} to be $I = \{(i,j) : i=1,\ldots, r, \ j=1,\ldots, d_{i}\}$.

So $(\beta_{i,j})_{(i,j)\in I} \in C^{n}(\overline{\FF}_q)$ maps under $\pi^n$ to $(\alpha_{i,j})_{(i,j)\in I}\in \AAA^n(\overline{\FF}_q)$ which maps under $s$ to $a=(a_0,\ldots, a_{n-1})\in \AAA^{n}(\FF_q)$.

To this end, let $\phi_a$ be the corresponding Frobenius element of $(\beta_{i,j})_{(i,j)\in I}$ and let $h\in E_{i,1}$ be a rational function that is regular at all $\beta_{i,j}$.
Then, by definition,
\[
(\phi_a^{d_i} h)(\beta_{i,1}) = (h(\beta_{i,1}))^{q^{d_i}}.
\]
Write $\phi_a = (\xi,\sigma)$; then $\phi_a^{d_i} = (\prod_{k=1}^{d_i} \xi^{\sigma^{k-1}},\sigma^{d_i})$.
The coordinate $\sigma  \in S_n$ is induced from the action of the Frobenius automorphism on the roots of $f$, so since $\alpha_{i,j}=\alpha_{i,1}^{q^{j-1}}$, we have that $Y_{i,j} = \sigma^{j}(Y_{i,1})$, $j=0,\ldots, d_{i}-1$ and $Y_{i,1}= \sigma^{d_i} (Y_{i,1})$. The latter also implies that $E_{i,1}$ maps to itself under $\sigma^{d_i}$, hence
\[
(\phi_a^{d_i} h) (\beta_{i,1}) =( \prod_{k=1}^{d_i} \xi^{\sigma^{k-1}}(i,1) h)(\beta_{i,1}) = (\prod_{j=1}^{d_i} \xi(i,j) h) (\beta_{i,1}).
\]
To conclude, we obtained
\[
(\prod_{j=1}^{d_i} \xi(i,j) h) (\beta_{i,1}) =  (h(\beta_{i,1}))^{q^{d_i}},
\]
but the Frobenius at $P_i$ in $E$ is the unique element of $G$ satisfying this, so ${\rm Frob}_{P_i} = \prod_{j=1}^{d_i} \xi(i,j)$. Thus, $\lambda_{\phi_a}=\lambda_{f;E/\FF_q(T)}$, which completes the proof.
\end{proof}

\subsection{Computation of a Galois group }
We keep the notation as in \S\ref{sec:Frobenius}, in particular $F$ is a field and $\pi\colon C\to \AAA^1_F$ is a branched covering of geometrically irreducible $F$-curves that is generically Galois with Galois group $G$.
For $a=(a_0,\ldots, a_{n-1})\in \AAA^{n}$ and for $0\leq m<n$, we consider the following subspace of $\AAA^{n}$ 
\begin{equation}\label{eq:Wam}
W = W_{a,m} = \{ (w_0,\ldots, w_{n-1}) \in \AAA^n : w_i=a_i,\ i=m+1,\ldots,  n-1\}\cong \AAA^{m+1}.
\end{equation}
So if $\AAA^n$ is the space of coefficients of polynomials, then $W$ is the short interval $I(T^n+a_{n-1} T^{n-1} + \cdots + a_0,m)$. 
Let $U$ and $V$ be irreducible components of $(s\circ \pi^n)^{-1}(W)$ and $s^{-1}(W)$, respectively. Let $M$, $L$, and $K$ be the function fields of $U$, $V$, and $W$,
\begin{figure}[h]\label{figure1}
\[
\xymatrix{
C^n\ar[d]^{\pi^n} &U\ar[l]\ar[d]^{\pi^n}
				 && M\ar@{-}[d]\\
\AAA^n_{F}\ar[d]^{s}&V\ar[l]\ar[d]^{s}
				&&L = K(y_1,\ldots, y_n)\ar@{-}[d]\\
\AAA^{n}_{F}  & W\ar[l]
				&& K = F(A_0,\ldots, A_m)
}
\]
\caption{Variety and field diagrams}
\label{construction_of_fiberproducts}
\end{figure}
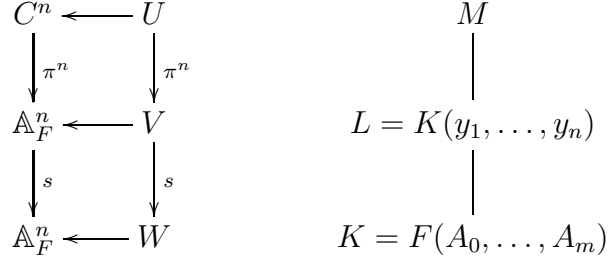
where $A_0, \ldots, A_m$ are independent variables and the $y_i$-s satisfy
\[
f(T) = T^n + a_{n-1}T^{n-1} + \cdots + a_{m+1}T^{m+1} + A_mT^m + \cdots +A_0 = \prod_{i=1}^n (T-y_i).
\]
Then, $M/K$ is a Galois extension and if $D(a_{n-1},\ldots, a_{m+1},A_{m},\ldots, A_0)\neq 0$, then by \eqref{property1:D}, it is unramified, hence  its Galois group  is canonically isomorphic to the subgroup the generic Galois group $G\wr S_n$ given in \eqref{eq:action}; namely  all elements that generically map  $U$ to itself. We identify $\Gal(M/K)$ with this subgroup, in particular 
\[
H=\Gal(M/L) \leq G^n.
\] 

We have that $\Gal(M/K)$ equals $G\wr S_n$ if and only if $(s\circ \pi^n)^{-1}(W)$ is irreducible, i.e. equals to $U$.

We prove that this is indeed the case if the ramification at infinity is tame.

\begin{proposition}\label{general_computation}
Under the notation above, assume that $m\geq 2$ if $q$ is odd and $m \geq 3$ if $q$ is even. Further assume that the infinite prime is tamely ramified in the fixed field $E^{ab}$ in $E$ of the commutator of $G $. Then
\[
\Gal(M/K) = G\wr S_n.
\]
\end{proposition}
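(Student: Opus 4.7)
Since $\Gal(M/K)$ embeds into the generic Galois group $G \wr S_n = G^n \rtimes S_n$ with $H := \Gal(M/L)$ a normal subgroup of $G^n$, I would split the problem into two steps: (i) show $\Gal(L/K) = S_n$, i.e.\ that $L/K$ is the full symmetric $S_n$-extension, and (ii) show that $H = G^n$. Once both are established, the projection $\Gal(M/K) \twoheadrightarrow S_n$ acts on $H$ by coordinate permutations via the imprimitive action \eqref{eq:action}, and $\Gal(M/K) = G \wr S_n$ follows.

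For (i), the task is to determine the Galois group of the partially generic polynomial
\[
f(T) = T^n + a_{n-1} T^{n-1} + \cdots + a_{m+1} T^{m+1} + A_m T^m + \cdots + A_0
\]
over $K = F(A_0, \ldots, A_m)$, where only the low-degree coefficients are independent transcendentals. Classical theorems of Cohen--Uchida type say that once $m$ exceeds a small absolute threshold the Galois group is the full $S_n$, and the bounds $m \geq 2$ in odd characteristic and $m \geq 3$ in even characteristic match these thresholds exactly. I would cite and apply this result to conclude that $V = s^{-1}(W)$ is geometrically irreducible and that $L/K$ is the full $S_n$-cover.

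For (ii), I would argue by contradiction. The subgroup $H \leq G^n$ is $S_n$-stable, and a proper such $H$ produces, via a standard Goursat-style reduction through the abelianization $G \twoheadrightarrow G^{\mathrm{ab}}$, a nontrivial $S_n$-equivariant character of $(G^{\mathrm{ab}})^n$ trivial on the image of $H$. Decomposing $G^{\mathrm{ab}}$ into its prime-to-$p$ and $p$-primary parts, I would apply Kummer theory to the former and Artin--Schreier--Witt theory to the latter to write the composite $E_1^{\mathrm{ab}} \cdots E_n^{\mathrm{ab}}$ explicitly over $L$ via radical or additive generators. The $S_n$-invariance forces any putative relation among the $n$ generators to be symmetric, and since the $n$ roots $y_1, \ldots, y_n$ of $f$ only collide at the infinite prime of $\mathbb{P}^1$, such a relation would have to be concentrated entirely in the ramification data at infinity. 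The tameness hypothesis on $E^{\mathrm{ab}}$ at infinity then pins down the valuations of the Kummer generators and the pole orders of the Artin--Schreier generators tightly enough to rule out any such dependency.

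The main obstacle I expect is the wild part of the Artin--Schreier--Witt analysis, where the pole-order bookkeeping at infinity is most delicate. The tameness hypothesis is precisely what makes these bounds incompatible with an $S_n$-symmetric relation; without it, wild ramification at infinity could fabricate exactly the kind of dependency step (ii) must exclude, which is presumably one reason why the analogous number-field problem for $\varepsilon \leq 1/2$ remains open. The hypotheses on $m$ also play a secondary role here, guaranteeing that $W \subseteq \AAA^n$ meets the pulled-back branch locus of $\pi^n$ transversally enough that the explicit Kummer/Artin--Schreier calculation reflects the generic situation.
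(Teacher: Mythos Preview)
Your two-step plan matches the paper's, and step~(i) is indeed just a citation (the paper invokes \cite[Proposition~3.6]{bank2015a}). Step~(ii), however, has two real gaps.

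First, the reduction to the abelianization does not go through as stated. You claim that a proper $S_n$-stable $H \leq G^n$ yields, via Goursat, a nontrivial character of $(G^{\mathrm{ab}})^n$ trivial on the image of $H$; but if $G$ is perfect then $(G^{\mathrm{ab}})^n$ is trivial, while proper $S_n$-stable subgroups of $G^n$ still exist (the diagonal, for one). The paper supplies the missing ingredient as a separate geometric lemma (Lemma~\ref{computation_two-fold}): for each pair $i \neq j$ the projection $H \to G^2$ is surjective, proved by passing to the cover $\Upsilon \to \AAA^2$ obtained by marking two roots and observing that $\Upsilon \times_{\AAA^2} C^2$ is geometrically connected. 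Only once this pairwise surjectivity is in hand does a Goursat/induction argument show that any proper $H$ lies under a proper normal subgroup with abelian quotient, reducing the problem to the abelian case (Lemma~\ref{computation_abelian}).

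Second, in the abelian case your assertion that ``$S_n$-invariance forces any putative relation to be symmetric'' is false. The space $V \subseteq \FF_p^n$ of Kummer (resp.\ Artin--Schreier) dependencies among $D(y_1),\ldots,D(y_n)$ is $S_n$-stable, but the nonzero $S_n$-submodules of $\FF_p^n$ are the line $V_1 = \langle (1,\ldots,1)\rangle$, the hyperplane $V_{n-1} = \{\sum v_i = 0\}$, and $\FF_p^n$ itself (Lemma~\ref{lem_invsub}). Excluding $(1,\ldots,1)\in V$ is one sub-step; excluding $(1,-1,0,\ldots,0)\in V$ is a separate and harder sub-step, which the paper treats by descending to $K(y_1,y_2)$, applying the norm (resp.\ trace) down to $K(y_1)$, and carrying out an explicit coprimality analysis in $A_0$ (with the Berlekamp discriminant replacing $\disc(f)$ when the characteristic is~$2$). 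Your ``concentrated at infinity'' heuristic does not address this case. In fact the tameness hypothesis enters only to normalize the Artin--Schreier generators to satisfy $v_\infty(D_i) \geq 0$; thereafter the argument is finite-place polynomial algebra, not a ramification comparison at infinity.
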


\begin{remark}
	Proposition~4.6 in \cite{bank2018} coincides with the special case of Proposition~\ref{general_computation} where $G=\ZZ/2\ZZ$, $C=\AAA^1$ and $\pi(x)=x^2$. Cyclic extensions with genus $0$ were partly treated by Cohen \cite[Thm.~12]{cohen1980}.
\end{remark}

\subsubsection{Reduction steps}
Since by \cite[Proposition 3.6]{bank2015a}, $\Gal(L/K) = S_n$ and since $\Gal(M/K)\leq G\wr S_n$, in order to prove that $\Gal(M/K) = G\wr S_n$ it suffices to show that  $H=G^n$. 

The proof of Proposition~\ref{general_computation} is reduced, by elementary finite group theory, to the following statements:
\begin{lemma}\label{computation_abelian}
Proposition~\ref{general_computation} holds true if $G$ is abelian.
\end{lemma}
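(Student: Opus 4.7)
The plan is to prove that $H = G^n$, which combined with $\Gal(L/K) = S_n$ from \cite[Proposition 3.6]{bank2015a} yields $\Gal(M/K) = G \wr S_n$. Since $G$ is abelian, my strategy is character-theoretic: $H = G^n$ if and only if every nontrivial character $\chi = (\chi_1,\ldots,\chi_n) \in \widehat{G^n}$ restricts nontrivially to $H$; equivalently, the cyclic subextension of $M/L$ cut out by $\chi$ has full degree $\ord(\chi)$. Decomposing $\chi$ into its prime-power components, it suffices to treat characters of prime-power order $\ell^k$ for each prime $\ell$.

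Since $G$ is abelian, $E^{ab} = E$, so the hypothesis forces $E$ itself to be tame at infinity. In particular, the $p$-Sylow subgroup $G_p$ of $G$ (where $p = \mathrm{char}\,F$) is unramified at infinity. For $\ell \neq p$, I would enlarge $F$ to contain $\mu_{\ell^k}$ (a harmless move, since it does not alter $H$) and apply Kummer theory: each $\chi_i$-component of $E/F(T)$ takes the form $F(T)(\sqrt[\ell^k]{h_i(T)})$ for some $h_i \in F(T)^\ast$. The cyclic subextension of $M/L$ corresponding to $\chi$ is then $L\bigl(\sqrt[\ell^k]{\prod_i h_i(y_i)^{c_i}}\bigr)$ for the exponents $c_i$ prescribed by $\chi$, whose nontriviality is equivalent to $\prod_i h_i(y_i)^{c_i}$ not being an $\ell^k$-th power in $L^\ast$. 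For $\ell = p$, Artin-Schreier-Witt theory together with the unramifiedness of $G_p$ at infinity lets me realize each component via polynomial equations $\wp(\beta_i) = h_i(Y_i)$ with $h_i \in F[Y_i]$, and the analogous nontriviality becomes the assertion $\sum_i c_i h_i(y_i) \notin \wp(W_k(L))$.

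The main obstacle, and the technical heart of the proof, is establishing these nontriviality assertions. My plan is a valuation argument at the places of $L$ lying over the infinity place of $F(T)$: for each $i$, the embedding $F(T) \hookrightarrow L$ sending $T \mapsto y_i$ specializes to a place of $L$ at which the valuation of $h_i(y_i)$ is controlled by $\deg h_i$. Tameness at infinity guarantees that this leading exponent is nonzero modulo $\ell^k$ (and, in the Artin-Schreier-Witt case, that $h_i$ has the right shape to produce a nontrivial $\wp$-class). The hypotheses $m \ge 2$ (and $m \ge 3$ in characteristic $2$) will be used to ensure enough freedom in the parameters $A_0,\ldots,A_m$ for these $n$ places of $L$ to remain distinct and for no accidental cancellation to occur; the extra slack in characteristic $2$ reflects the subtler behavior of Artin-Schreier theory there. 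Assembling the valuation contributions at the $n$ places, and weighing them against the constraints on $y_1,\ldots,y_n$ coming from the specialization of $A_{m+1},\ldots,A_{n-1}$, should rule out any $\ell^k$-th-power (resp.\ $\wp$-image) relation with nontrivial $(c_i)$, completing the argument.
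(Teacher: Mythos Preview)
Your reduction to $H=G^n$ and your plan to test this via characters (equivalently, via Kummer/Artin--Schreier classes) is the same opening as the paper's. Two differences are worth noting before the main point. First, the paper does not work with prime-power order $\ell^k$ and Artin--Schreier--Witt; it uses a Frattini reduction (Lemma~\ref{Kummer)general_k_lemma} and Lemma~\ref{lem:fieldfrattini}) so that only \emph{prime} order needs to be checked, which keeps the argument within ordinary Kummer and Artin--Schreier theory. Second, your claim that ``tameness at infinity guarantees that this leading exponent is nonzero modulo $\ell^k$'' is not correct for $\ell\neq p$: tameness only says $p\nmid e_\infty$ and places no constraint on the $\ell$-part. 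In the paper, tameness is used only in the Artin--Schreier case (Case~B), to normalize the $D_i$ so that $v_\infty(D_i)\ge 0$.

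The real gap is the ``valuation argument at the places of $L$ lying over the infinite place of $F(T)$''. In the short-interval setting the $y_i$ are \emph{not} algebraically independent: the elementary symmetric functions $e_{m+1}(y),\ldots,e_{n-1}(y)$ are fixed scalars $a_{m+1},\ldots,a_{n-1}$. In particular $\sum_i y_i=-a_{n-1}$ is constant, so there is no place of $L$ at which a single $y_i$ goes to infinity while the others stay bounded; several $y_i$ must blow up together, and the contributions to $v\bigl(\prod_i h_i(y_i)^{c_i}\bigr)$ can cancel. Your sketch acknowledges this (``no accidental cancellation'') but gives no mechanism to rule it out, and the hypotheses $m\ge 2$ (or $3$) do not obviously buy you what you need here. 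The paper's proof takes a completely different route at this point: it exploits that the space $V$ of $p$-linear dependencies among the $D_i(y_j)$ is $S_n$-invariant, so by Lemma~\ref{lem_invsub} one only has to exclude the vectors $(1,\ldots,1)$ and $(1,-1,0,\ldots,0)$. For $(1,\ldots,1)$ it uses the resultant-type identity $\prod_i D(y_i)=\pm c^n\prod_j f(\alpha_j)$ (or its additive analogue, Lemma~\ref{LemSym}), together with the fact that each $f(\alpha_j)$ is \emph{linear in $A_0$} with distinct constant terms, plus the discriminant/Berlekamp discriminant when $p=2$. For $(1,-1,0,\ldots,0)$ it uses the norm (resp.\ trace) map $K(y_1,y_2)\to K(y_1)$ to reduce to a one-variable statement in $F(y_1,A_0,A_2,\ldots,A_m)$. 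None of this is a valuation-at-infinity argument; the variable $A_0$ and the polynomials $f(\alpha_j)$ are doing the work. As written, your proposal is missing this core idea.
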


\begin{lemma}\label{computation_two-fold}
For each $i\neq j$ the projection on the $i,j$-th coordinates $H\to G^n\to G^2$ is surjective.
\end{lemma}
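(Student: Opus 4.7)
The plan is to translate the group-theoretic statement into a function-field equality and then combine Lemma~\ref{computation_abelian} with a branch-divisor analysis on $V$. By Proposition~3.6 of \cite{bank2015a}, $\Gal(L/K) = S_n$, whose transitive action on ordered pairs $(i,j)$ (via permutation of the $y_i$) reduces the claim to $(i,j)=(1,2)$. Via Galois correspondence, $\pi_{12}(H)$ is canonically isomorphic to $\Gal(E_1 E_2 / (E_1 E_2) \cap L)$, so the task is the equality $(E_1 E_2) \cap L = F(y_1, y_2)$. Geometrically, this asks that the pullback of the $(G\times G)$-Galois cover $C\times C\to \AAA^2$ along the morphism $(y_1, y_2) : V \to \AAA^2$ is geometrically irreducible.

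I would first apply Lemma~\ref{computation_abelian} to the sub-extension $E^{ab}/\FF_q(T)$ (with Galois group $G^{ab}$); this yields a surjection $H \twoheadrightarrow (G^{ab})^n$ and hence $\pi_{12}(H) \twoheadrightarrow (G^{ab})^2$. By the $S_n$-invariance of $H$ (which is normal in $\Gal(M/K)$, a group surjecting onto $S_n$) and Goursat's lemma applied via the transposition $(1,2)$, a proper image $\pi_{12}(H)\subsetneq G\times G$ would take the shape
\[
\{(g_1,g_2)\in P\times P \,:\, g_1 Q = \theta(g_2 Q)\}
\]
for a common projection image $P\leq G$, a common normal subgroup $Q\triangleleft P$, and an involutive automorphism $\theta$ of $P/Q$. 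The surjection onto $(G^{ab})^2$ further forces $P\cdot[G,G]=G$ and $Q\cdot[P,P]=P$, so $P/Q$ is perfect and, if nontrivial, non-abelian.

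The heart of the argument rules out such a Goursat configuration by a ramification analysis. A nontrivial quotient $P/Q$ would correspond to an intermediate Galois cover $C'\to \AAA^1$ (a quotient of $C\to \AAA^1$) whose pullbacks along $y_1 : V\to \AAA^1$ and along $y_2 : V\to \AAA^1$ become isomorphic as $V$-covers. I would then compare branch divisors: for any finite branch point $b \in \AAA^1$ of $C'\to \AAA^1$, the divisors $\{y_1=b\}$ and $\{y_2=b\}$ on $V$ are distinct irreducible hypersurfaces (since $y_1$ and $y_2$ are algebraically independent in $L$, guaranteed by $m\geq 2$), so the two pullbacks have distinct branch loci, a contradiction. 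If instead $C'\to \AAA^1$ is étale over $\AAA^1$ (with all ramification concentrated at $\infty$), then its deck group is a quotient of the pro-$p$ geometric fundamental group $\pi_1^{\mathrm{et}}(\AAA^1_{\overline{\FF}_q})$, hence a $p$-group; but the tame-at-$\infty$ hypothesis on $E^{ab}$, combined with a Riemann--Hurwitz count on $\PP^1$, forces its maximal abelian subcover to be trivial, incompatible with $P/Q$ being perfect and nontrivial. The main technical obstacle is verifying the single-coordinate surjectivity $P = G$, which is implicitly required so that the Goursat form above applies with $P\times P$ in place of a smaller product: geometrically this reduces to showing geometric irreducibility of the generic fiber of $V\to \AAA^1$, i.e., to the $(n-1,m-1)$ instance of Proposition~3.6 of \cite{bank2015a}, but in the boundary cases $m=2$ (odd $q$) and $m=3$ (even $q$) the parameters drop below the nominal hypothesis and a supplementary direct argument — for example, a separate branch-divisor analysis for the single-coordinate case, or an explicit Kummer/Artin--Schreier calculation — is required.
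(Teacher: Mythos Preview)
Your approach is genuinely different from the paper's, and it carries a real gap. You apply Goursat's lemma to the two $G$-factors inside $\pi_{12}(H)$, use the swap symmetry and Lemma~\ref{computation_abelian} to force the Goursat quotient $P/Q$ to be perfect, and then try to rule out a nontrivial $P/Q$ by a branch-divisor / fundamental-group argument. The step ``$P/Q$ corresponds to an intermediate cover $C'\to\AAA^1$, a quotient of $C\to\AAA^1$'' is where this breaks: it needs $P=G$, since only then is $P/Q$ a quotient of $G$ rather than of a mere subgroup. You flag $P=G$ as the obstacle, but your proposed reduction is not the right statement: the $(n-1,m-1)$ instance of \cite[Prop.~3.6]{bank2015a} would give $\Gal(L/K(y_1))=S_{n-1}$, which is automatic from $\Gal(L/K)=S_n$ and says nothing about $F(y_1)$ being algebraically closed in $L$. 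Separately, in your \'etale-over-$\AAA^1$ case the phrase ``incompatible with $P/Q$ being perfect and nontrivial'' is backwards---having trivial maximal abelian subcover is exactly what perfectness means; the correct deduction is simply that a finite $p$-group with trivial abelianization is trivial.

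The paper bypasses all of this by working not with $H$ but with $\Gamma=\Gal(M/K(y_i,y_j))$. Passing to $\Upsilon=V/S_{n-2}$, the map $(y_i,y_j)\colon\Upsilon\to\AAA^2$ has fibers that are affine subspaces of $\AAA^{m+1}$ (two linear conditions $f(y_i)=f(y_j)=0$), hence geometrically irreducible; this makes $\Upsilon\times_{\AAA^2}C^2$ connected and gives $\Gamma\twoheadrightarrow G^2$ for free. Goursat is then applied to the pair of surjections $\Gamma\to G^2$ and $\Gamma\to S_{n-2}$: every proper normal subgroup of $S_{n-2}$ lies in $A_{n-2}$, so the only possible common quotient is $\ZZ/2\ZZ$, and that is excluded by Lemma~\ref{computation_abelian}. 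Joint surjectivity onto $G^2\times S_{n-2}$ forces the kernel $H=\ker(\Gamma\to S_{n-2})$ to surject onto $G^2$. The passage to $\Upsilon$ is precisely the trick that makes the fiber geometry elementary and dissolves the $P=G$ issue you were unable to close.
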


\begin{proof}[Proof that  Lemmas~\ref{computation_abelian} and \ref{computation_two-fold} imply Proposition~\ref{general_computation}]
From Lemma~\ref{computation_abelian} applied to the abelianization $G^{{\rm ab}}$ of $G$, we get that $H$ surjects onto $(G^{{\rm ab}})^n$. This in particular means that $H$ is contained in no proper normal subgroup with abelian quotient. By Lemma~\ref{computation_two-fold}, $H$ surjects onto any projection to two coordinates. To finish the proof we need that these two group theoretical properties suffice to imply that $H=G^n$ and this is indeed the case, as the lemma below shows. 
\end{proof}

\begin{lemma}
Let $H$ be a subgroup of $G^n$. Suppose that $H$ maps surjectively onto $G^2$ under each possible projection onto two copies. If $H$ is a proper subgroup of $G^n$, then it is contained in a proper normal subgroup with abelian quotient. 
\end{lemma}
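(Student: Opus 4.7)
The plan is induction on $n$. The case $n = 2$ is vacuous since pair-surjectivity with $(i,j) = (1,2)$ forces $H = G^2$. For $n \geq 3$, consider the projection $\pi\colon H \to G^{n-1}$ that drops the last coordinate. If $\pi(H) \neq G^{n-1}$, then $\pi(H)$ inherits the pair-surjectivity hypothesis and the inductive hypothesis yields a proper normal subgroup $N' \trianglelefteq G^{n-1}$ with abelian quotient containing $\pi(H)$; then $N' \times G \trianglelefteq G^n$ is a proper normal subgroup with abelian quotient containing $H$, and we are done.

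So assume $\pi(H) = G^{n-1}$. The projection $H \to G$ onto the last factor is also surjective (it factors any pair-projection $H \twoheadrightarrow G_1 \times G_n$), so Goursat's lemma applied to $H \leq G^{n-1} \times G$ yields surjections $\alpha\colon G^{n-1} \twoheadrightarrow Q$ and $\beta\colon G \twoheadrightarrow Q$ onto a common quotient $Q$, with $H = \{(x,y)\in G^{n-1}\times G : \alpha(x) = \beta(y)\}$. Since $H \neq G^n$, the group $Q$ is nontrivial. The crux is to show $Q$ is abelian: once this is known, the map $\gamma\colon G^n \to Q$ defined by $\gamma(x,y) = \alpha(x)\beta(y)^{-1}$ is a homomorphism (well-defined precisely because $Q$ is abelian), and $\ker\gamma$ is a proper normal subgroup of $G^n$ with abelian quotient $Q$ and contains $H$.

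To prove $Q$ is abelian, fix $i \in \{1,\ldots,n-1\}$ and let $\alpha_i = \alpha|_{G_i}\colon G \to Q$ denote the restriction to the $i$-th factor. Pair-surjectivity of $H \twoheadrightarrow G_i \times G_n$ translates, via the fiber-product description, into the statement that for every $q \in Q$ there is $x \in G^{n-1}$ with $x_i = 1$ and $\alpha(x) = q$; equivalently, $\alpha$ restricted to the subgroup $\prod_{j\neq i} G_j \leq G^{n-1}$ is surjective onto $Q$. Since different factors $G_i, G_j$ commute pointwise in $G^{n-1}$, the subgroup $\alpha_i(G_i)$ commutes with every $\alpha_j(G_j)$ for $j \neq i$, and hence with their product $\prod_{j\neq i}\alpha_j(G_j) = Q$. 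So $\alpha_i(G_i) \subseteq Z(Q)$. Running $i$ through $\{1,\ldots,n-1\}$ and using that $Q$ is generated by the $\alpha_i(G_i)$ shows $Q = Z(Q)$, so $Q$ is abelian. The main obstacle is precisely this centralizing argument forcing $Q$ to be abelian; the induction step and the invocation of Goursat's lemma are routine bookkeeping.
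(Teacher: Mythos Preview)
Your proof is correct and follows essentially the same strategy as the paper's: induction on $n$, Goursat's lemma applied to $H \leq G^{n-1} \times G$, and then the observation that distinct copies of $G$ commute pointwise to force the Goursat quotient $Q$ to be abelian. The only cosmetic difference is that the paper first reduces to the case where \emph{all} $(n-1)$-fold projections of $H$ are surjective and uses this to show that two individual restrictions $a|_{G_1}$ and $a|_{G_2}$ each surject onto $Q$, whereas you keep only the single projection dropping the last coordinate and instead use pair-surjectivity with $G_n$ to show each $\alpha_i(G_i)$ is central in $Q$.
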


\begin{proof}
The case $n=2$ is trivial, and by induction, we may assume this is true for $n-1$ and that $n\geq 3$. If any map from $H$ to the product of $n-1$ copies of $G$ is not surjective, then the image of $H$ is contained in a normal subgroup with abelian quotient, so $H$ is as well. So we may assume that the projections from $H$ to any product of $n-1$ copies of $G$ are surjective.  Now apply Goursat's lemma to $G^{n-1}$ and $G$. We get that there is a group $G'$, surjections $a\colon G^{n-1} \to G'$ and $b\colon G\to G'$, such that $H$ consists of tuples $(g_1,\ldots,g_n)$ in $G^n$ with $a(g_1,...,g_{n-1})=b(g_n)$. Moreover since $H$ is a proper subgroup, $G'$ is non-trivial. 

Now because the map $H\to G^{n-1}$ obtained by dropping the $i$-th coordinate is surjective, for any $g_n$ there exists some $g_i$ such that $(e,\ldots, e, g_i,e,\ldots, g_n)\in H$, and so $a(e,\ldots,e,g_i,\ldots,e)=b(g_n)$.  Putting $G_1 = \{(g_1,e,\ldots,e) : g_1\in G\}$ and $G_2= \{ (e,g_2,e,\ldots ,e ): g_2\in G\}$, we conclude that $a$ maps both $G_1$ and $G_2$  onto $G'$. Thus as $G_1$ and $G_2$ commute, we get that $G'$ must be abelian. 
Thus the pre-image of $G'$ is the desired normal subgroup with abelian quotient.
\end{proof}

To finish the proof of Proposition~\ref{general_computation}, it remains to prove Lemmas~\ref{computation_abelian} and \ref{computation_two-fold}. Since the former lemma is technical, we start by proving the latter assuming the former.

\subsubsection{Proof of Lemma~\ref{computation_two-fold} using Lemma~\ref{computation_abelian}}

We look at the covering $\Upsilon$ of $W$ defined by adjoining two roots $y_i,y_j$ of the polynomial; i.e., $\Upsilon$ is the quotient space of $V$ under the action of $S_{n-2}$, so $\Gal(V/\Upsilon)\cong S_{n-2}$, the group of all permutations fixing $i,j$. Let $\Gamma$ be the Galois group of $U/\Upsilon$. So, the restriction-of-automorphisms map induces a surjection $\Gamma\to \Gal(V/\Upsilon)=S_{n-2}$. 

	The covering $\Upsilon$ maps to $\AAA^2$ by sending to the two roots. The fibers are connected because we just add two congruence conditions. We have the covering $C^2\to \AAA^2$ with Galois group $G^2$, and its fiber product $\Upsilon\times_{\AAA^2}(C^2)$ is geometrically connected, since the fiber of $\Upsilon \to \AAA^2$ is geometrically connected. This implies that $\Gamma$ surjects onto $G^2$.

We apply Goursat's lemma to these two maps. They are jointly surjective unless some quotient of $G^2$ matches some quotient of $S_{n-2}$. But all normal subgroups of $S_{n-2}$ are contained in $A_{n-2}$, so this can only happen if there is some non-trivial relation with order two quotients of $G$.
This is not possible by the abelian case, hence the proof is done. 
\qed

The proof of Lemma~\ref{computation_abelian} is more technical and requires some preparation.

\subsubsection{Some more group theory}
This section contains well known facts that we summarize for the convenience of the reader. 
We start by stating two well known facts on the symmetric group. The first is on normal subgroups:
\begin{lemma}\label{lem_Sn}
	Let $n \ge 1$. The group $S_n$ does not have any normal subgroups of odd prime index.
\end{lemma}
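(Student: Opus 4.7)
The plan is to reduce the statement to a computation of the abelianization of $S_n$. Suppose for contradiction that $N \trianglelefteq S_n$ has index an odd prime $p$. Then the quotient $S_n/N$ is cyclic of order $p$, hence abelian, so $N$ must contain the commutator subgroup $[S_n,S_n]$.

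The key fact to invoke (or prove in one line) is that for $n \geq 2$ the commutator subgroup of $S_n$ is exactly $A_n$: the sign homomorphism $S_n \to \{\pm 1\}$ shows $[S_n,S_n] \subseteq A_n$, and conversely every 3-cycle $(a\ b\ c) = [(a\ b),(a\ c)]$ lies in $[S_n,S_n]$, and the 3-cycles generate $A_n$. Hence $[S_n:[S_n,S_n]] = 2$ for $n \geq 2$, and trivially $[S_1:[S_1,S_1]] = 1$.

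Since $N \supseteq [S_n,S_n]$, its index divides $[S_n:[S_n,S_n]] \leq 2$. This contradicts the assumption that the index is an odd prime, completing the proof.

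The statement is essentially a one-step consequence of knowing the abelianization, so there is no real obstacle; the only care needed is the edge case $n = 1$, where $S_1$ is trivial and admits no subgroup of prime index at all.
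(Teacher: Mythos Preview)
Your proof is correct. The paper does not prove this lemma at all; it introduces it as one of ``two well known facts on the symmetric group'' and moves on. Your argument via the abelianization is the standard one and needs no modification, apart from the tiny quibble that for $n=2$ there are no $3$-cycles, so the ``conversely'' clause is vacuous there---but $A_2$ is trivial anyway, so the conclusion $[S_2,S_2]=A_2$ still holds.
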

The second is about the invariant subspaces of the standard representation of $S_n$ on $\FF_p^n$ acting by permuting the coordinates.

\begin{lemma}\label{lem_invsub}  The invariant subspaces of $\FF_p^n$ under  $S_n$ ($n\geq 3$) are:
	\begin{enumerate}
		\item $V_0=\{(0,\ldots,0)\}$,
		\item $V_1 = \mathrm{sp}_{\FF_p} \{(1,\ldots,1)\}$,
		\item $V_{n-1} = \{(x_1,\ldots,x_n) \in \FF_p^n  :  \sum_{i=1}^n x_i = 0 \}$, and
		\item $V_n = \FF_p^n$.
	\end{enumerate}
\end{lemma}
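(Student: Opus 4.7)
The plan is to classify all $S_n$-invariant subspaces $W \leq \FF_p^n$ by a simple case analysis on a single nonzero vector of $W$. I would split according to whether every element of $W$ has all coordinates equal or not. In the former case $W \subseteq V_1$, hence $W = V_1$ since $V_1$ is one-dimensional. In the latter case, pick $v \in W$ with $v_i \neq v_j$ for some $i \neq j$. The key observation is that applying the transposition $(i\,j) \in S_n$ and subtracting yields
\[
v - (i\,j).v \;=\; (v_i - v_j)(e_i - e_j) \in W,
\]
so $e_i - e_j \in W$. Since $S_n$ acts transitively on ordered pairs of distinct indices, every difference $e_k - e_l$ lies in $W$, and these vectors span $V_{n-1}$; consequently $V_{n-1} \subseteq W$.

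Having reduced to the case $V_{n-1} \subseteq W$, the next step is to show that $W$ equals either $V_{n-1}$ or $V_n$. For this I would use that $\dim(V_n/V_{n-1}) = 1$, so the image of $W$ in this quotient is either zero or everything, forcing $W = V_{n-1}$ or $W = V_n$. Combining both cases gives the complete list $V_0, V_1, V_{n-1}, V_n$.

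I do not anticipate any real obstacle. The argument is uniform in $p$: even when $p \mid n$ (so that $V_1 \subseteq V_{n-1}$ and we have a single flag $0 \subset V_1 \subset V_{n-1} \subset V_n$), no step uses invertibility of $n$ in $\FF_p$. The hypothesis $n \geq 3$ enters only to ensure that $V_1$ and $V_{n-1}$ are genuinely distinct subspaces (since $\dim V_{n-1} = n-1 \geq 2$), so that they appear as separate entries in the final list.
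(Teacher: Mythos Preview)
Your argument is correct. The paper does not give a proof of this lemma at all: it is stated in a subsection titled ``Some more group theory'' whose opening line is ``This section contains well known facts that we summarize for the convenience of the reader,'' and the lemma is left unproved there. So there is no paper proof to compare against, and your transposition-and-quotient argument fills the gap cleanly.

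One small remark on your closing comment: the hypothesis $n \geq 3$ is not strictly needed for the \emph{argument} (your proof goes through verbatim for $n = 2$), but rather for the \emph{statement} to list four genuinely distinct subspaces in all characteristics; when $n = 2$ and $p = 2$ one has $V_1 = V_{n-1}$. Since the paper only ever invokes the lemma with $n \geq 3$, this is harmless.
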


Recall that the Frattini subgroup $\Phi(G)$  of a group $G$ is defined by $\Phi(G) = \bigcap_{U\leq_{m} G} U$, where the intersection is over the maximal subgroups of $G$. It has the property that for every $H\leq G$, if $H/H\cap \Phi(G) = G/\Phi(G)$, then $H=G$.
If $G$ is finite and $p\mid |G|$, then the subgroup $\Phi_p(G) = [G,G]G^p$ generated by commutators and $p$-th power of elements is normal and $G/\Phi_p(G) \cong (\ZZ/p\ZZ)^r$. Thus,
\begin{equation}\label{eq:Frattini_factorization}
\Phi_p(G) = U_1\cap \cdots \cap U_{r},
\end{equation}
with $U_i$ the kernel of the projection on the $i$-th coordinate, so $U_i$ is normal in $G$ of index $p$.

\begin{lemma}\label{Kummer)general_k_lemma}
	Let $G$ be a finite abelian 
	group and $H\leq G$. Assume that $H/(H\cap \Phi_p(G)) \cong G/ \Phi_p(G)$ for every $p \mid |G|$. Then $H=G$.
\end{lemma}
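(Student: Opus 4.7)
My plan is to reduce the statement to the $p$-group case and then invoke the Frattini property recalled immediately before the lemma. Since $G$ is finite abelian, decompose $G = \bigoplus_{p \mid |G|} G_p$ into its Sylow $p$-subgroups; this decomposition restricts to $H$, so putting $H_p = H \cap G_p$ one has $H = \bigoplus_{p \mid |G|} H_p$, because each Sylow projection $G \to G_p$ can be realized as multiplication by a suitable integer (via the Chinese Remainder Theorem) and therefore maps $H$ into itself.

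Next, for a fixed prime $p \mid |G|$, the plan is to identify $G/\Phi_p(G)$ with $G_p/G_p^p$. Since $G$ is abelian, $\Phi_p(G) = G^p$, and because $p$ is invertible modulo $|G_q|$ for $q \neq p$, every element of $G_q$ is already a $p$-th power; hence $G^p = G_p^p \oplus \bigoplus_{q \neq p} G_q$ and consequently $G/\Phi_p(G) \cong G_p/G_p^p$. Under this identification the image of $H$ is $H_p G_p^p / G_p^p$. The hypothesis supplies an abstract isomorphism $H/(H \cap \Phi_p(G)) \cong G/\Phi_p(G)$; since the natural map $H/(H \cap \Phi_p(G)) \hookrightarrow G/\Phi_p(G)$ is always injective and the two finite groups have the same order, it must in fact be a surjection, forcing $H_p G_p^p = G_p$.

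Finally, every maximal subgroup of a finite abelian $p$-group $G_p$ has index $p$ and hence contains $G_p^p$, so $\Phi(G_p) = G_p^p$; the previous step therefore reads $H_p \Phi(G_p) = G_p$, and the Frattini property recalled just above the lemma yields $H_p = G_p$. This holds for every $p \mid |G|$, and recombining the Sylow components gives $H = \bigoplus_p H_p = \bigoplus_p G_p = G$. The argument is essentially a formal unwinding of definitions once the Sylow reduction is in hand; the only point that requires a moment of attention is extracting the honest equality $H_p G_p^p = G_p$ from the abstract isomorphism hypothesis, which is handled by injectivity of the natural map together with an order comparison.
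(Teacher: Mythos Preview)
Your proof is correct. The approach differs slightly from the paper's: the paper works globally, using that for abelian $G$ one has $\Phi(G)=\bigcap_{p\mid |G|}\Phi_p(G)$ and $G/\Phi(G)\cong\prod_{p\mid |G|}G/\Phi_p(G)$; the hypothesis then forces $H$ to surject onto each factor $G/\Phi_p(G)$, and since these factors have pairwise coprime orders, $H$ surjects onto $G/\Phi(G)$, whence a single application of the Frattini property gives $H=G$. You instead decompose $G$ into its Sylow pieces $G_p$ and apply the Frattini property prime by prime to each $H_p\le G_p$. Both routes are short and rest on the same two ingredients (Sylow/CRT decomposition and the Frattini property); the paper's version is a line shorter but leaves the coprimality step implicit, while yours makes the reduction to $p$-groups explicit and handles the passage from the abstract isomorphism to the honest surjection $H_pG_p^p=G_p$ carefully.
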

\begin{proof}
Since $G$ is abelian, $\Phi(G) =\bigcap_{p \mid |G|} \Phi_p(G)$ and $G/\Phi(G) \cong \prod_{p \mid |G|} G/\Phi_p(G)$. So the assumption gives that $H/(H\cap \Phi(G)) = G/\Phi(G)$ and so $H=G$.
\end{proof}

Let $L$ be a field and $p$ a prime. If $p \nmid \mathrm{char}(L)$ let $\wp(x)=x^p$ and $L^{\circ}=L^{\times}$, otherwise let $\wp(x)=x^p-x$ and $L^{\circ}=L$. We say that elements in  $L^{\circ}$ are $p$-independent if they are linearly independent in $L^{\circ}/\wp(L^{\circ})$, considered as a $\FF_p$-vector space.
\begin{lemma}\label{lem:fieldfrattini}
Let $G$ be a finite abelian group and $H \leq G$. Let $L$ be a field such that for every $p \mid |G|$, either $L$ contains a primitive $p$-th root of unity or $L$ is of characteristic $p$. Let $M/L$ be an $H$-Galois extension. 
For a prime divisor  $p$ of $ |G|$, put $U_1,\ldots, U_{r(p)}$ as in \eqref{eq:Frattini_factorization}. 
Then, for every $1\leq i\leq r$ there exists $\alpha_{i,p}\in L$ such that 
\[
M^{H\cap U_i} = L(\beta_{i,p}), \qquad \wp(\beta_{i,p})=\alpha_{i,p}. 
\] 
Moreover, if  $\alpha_{1,p}, \ldots, \alpha_{r(p),p}$ are $p$-independent for all $p\mid |G|$, then $H= G$.
\end{lemma}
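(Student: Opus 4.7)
The plan is to apply Kummer and Artin--Schreier theory to produce the $\beta_{i,p}$ explicitly, and then to deduce the ``moreover'' clause from Lemma~\ref{Kummer)general_k_lemma} applied to the pair $H \le G$.

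Fix a prime divisor $p$ of $|G|$ and an index $1 \le i \le r(p)$. Since $U_i$ is normal in $G$ of index $p$, the subgroup $H \cap U_i$ is normal in $H$ of index $1$ or $p$, so $M^{H \cap U_i}/L$ is a cyclic extension of degree dividing $p$. The hypothesis on $L$ ensures that any such extension admits either a Kummer description (when $p \neq \mathrm{char}\,L$ and $L$ contains a primitive $p$-th root of unity) or an Artin--Schreier description (when $\mathrm{char}\,L = p$). In either case there exist $\alpha_{i,p} \in L$ and $\beta_{i,p}$ with $\wp(\beta_{i,p}) = \alpha_{i,p}$ such that $M^{H \cap U_i} = L(\beta_{i,p})$; when the extension is trivial, one simply takes $\alpha_{i,p} \in \wp(L^{\circ})$. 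This proves the first assertion.

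For the ``moreover'' clause, fix a prime $p \mid |G|$ and form the compositum
\[
N_p := L(\beta_{1,p},\ldots,\beta_{r(p),p}).
\]
By the Galois correspondence for $M/L$ together with \eqref{eq:Frattini_factorization}, we have $N_p = M^{H \cap U_1 \cap \cdots \cap U_{r(p)}} = M^{H \cap \Phi_p(G)}$, so $[N_p : L] = [H : H \cap \Phi_p(G)]$. On the other hand, Kummer/Artin--Schreier theory produces a bijection between finite elementary abelian $p$-extensions of $L$ (inside a fixed algebraic closure) and finite $\FF_p$-subspaces of $L^{\circ}/\wp(L^{\circ})$, under which the extension attached to a subspace $V$ has degree $p^{\dim_{\FF_p} V}$; the extension $N_p$ corresponds to the span of the images of $\alpha_{1,p},\ldots,\alpha_{r(p),p}$. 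Assuming these are $p$-independent, this span has dimension $r(p)$, so $[N_p : L] = p^{r(p)} = [G : \Phi_p(G)]$. Hence the natural injection $H/(H \cap \Phi_p(G)) \hookrightarrow G/\Phi_p(G)$ is surjective, hence an isomorphism, for every $p \mid |G|$, and Lemma~\ref{Kummer)general_k_lemma} yields $H = G$.

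The only genuine subtlety is handling the two characteristic cases uniformly via the $(\wp, L^{\circ})$ notation and correctly translating the compositum-to-span side of the Kummer/Artin--Schreier dictionary; beyond that, the argument is bookkeeping with the Galois correspondence and the elementary group theory already encoded in Lemma~\ref{Kummer)general_k_lemma}.
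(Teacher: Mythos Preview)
Your proof is correct and follows essentially the same approach as the paper's: both use Kummer/Artin--Schreier theory to produce the $\alpha_{i,p}$, identify $M^{H\cap\Phi_p(G)}$ with the compositum $L(\beta_{1,p},\ldots,\beta_{r(p),p})$, use $p$-independence to conclude $[H:H\cap\Phi_p(G)]=p^{r(p)}=[G:\Phi_p(G)]$, and then invoke Lemma~\ref{Kummer)general_k_lemma}. Your write-up is in fact more detailed than the paper's, which compresses the compositum/Galois-correspondence step into a single line.
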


\begin{proof}
First we note that $H/(H\cap U_i)\leq G/U_i = \ZZ/p\ZZ$ so $\Gal(M^{H\cap U)i}/L)\leq \ZZ/p\ZZ$, and Kummer theory (if $\mathrm{char}(L) \neq p$) or Artin-Schreier theory (otherwise) give us the required elements $\alpha_{i,p}$. Now, if the $\alpha_{i,p}$-s are $p$-independent, then by \eqref{eq:Frattini_factorization}, Kummer theory and Artin-Schreier theory,  we find that 
\[
(\ZZ/p\ZZ)^{r(p)} \cong \Gal(M^{H \cap \Phi_p(G)}/L) \cong H/(H\cap \Phi_p(G)).
\]
So by Lemma~\ref{Kummer)general_k_lemma}, $H=G$.
\end{proof}

\subsubsection{Rational functions}
We borrow the following  from \cite[Lem.~4.5]{bank2018}.
\begin{lemma}\label{LemBBFDisc}
Let $\tilde{f}(T) \in K[T]$ be a separable polynomial and let $f(T) = \tilde{f}(T)+A\in K(A)[T]$ where $A$ is transcendental over $K(T)$. Then $\disc(f) \in K[A]$ is not divisible by $A$.
\end{lemma}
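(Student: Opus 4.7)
The plan is very short because the lemma essentially reduces to the observation that discriminants are compatible with specialization of the coefficients. My first step would be to write the discriminant as a universal polynomial in the coefficients of $f$. Concretely, if $c_n \in K$ is the leading coefficient of $\tilde{f}$ (nonzero because $\tilde{f}$ is separable), then $f$ has the same leading coefficient $c_n$, and
\[
\disc(f) = \frac{(-1)^{n(n-1)/2}}{c_n}\,\mathrm{Res}_T(f,f'),
\]
where $\mathrm{Res}_T$ is a polynomial expression in the coefficients of $f$ and $f'$.

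The second step would be to note that $f' = \tilde{f}'$ since $A$ is a constant with respect to $T$, and that the coefficients of $f$ are obtained from those of $\tilde{f}$ by adding $A$ to the constant coefficient. Thus $\disc(f)$ lies in $K[A]$, and its specialization at $A=0$ is obtained by replacing each coefficient of $f$ with the corresponding coefficient of $\tilde{f}$, giving
\[
\disc(f)\big|_{A=0} = \disc(\tilde{f}).
\]
Since $\tilde{f}$ is separable by hypothesis, the right-hand side is nonzero, so $\disc(f)$ has nonzero constant term as a polynomial in $A$, i.e., $A \nmid \disc(f)$.

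There is essentially no obstacle here; the only thing to verify carefully is that the specialization $A \mapsto 0$ commutes with the formation of the resultant/discriminant, which is immediate from the fact that the resultant is a polynomial in the coefficients (equivalently, expressed via the Sylvester matrix, whose entries depend polynomially on the coefficients of $f$ and $f'$, and whose determinant therefore specializes correctly).
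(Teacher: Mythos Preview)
Your argument is correct: the discriminant is a polynomial in the coefficients of $f$, these coefficients specialize at $A=0$ to those of $\tilde f$, and hence $\disc(f)|_{A=0}=\disc(\tilde f)\neq 0$ by separability. The paper does not give its own proof of this lemma; it simply cites \cite[Lem.~4.5]{bank2018}, so there is nothing to compare beyond noting that your specialization argument is the standard one.
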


\begin{lemma}\label{LemSep}
Let $F$ be a field and let $\mathbf{A}=(A_1,\ldots,A_m)$ be an $m$-tuple of variables ($m \ge 2$). Let $\mathbf{\alpha}=(\alpha_1,\ldots,\alpha_m)$ be an $m$-tuple of scalars from $F$. Let $f_0(T)\in F[T]$ be a polynomial of degree $>m$. Then $\mathcal{F}(\mathbf{A},T)=f_0(T) + \sum_{i=1}^{m} A_i T^i + \sum_{i=1}^{m} A_i \alpha_i$ is separable in $T$.
\end{lemma}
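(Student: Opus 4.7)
The plan is to show that the discriminant $\disc_T \mathcal{F}(\mathbf A, T) \in F[\mathbf A]$ is a nonzero polynomial, which is equivalent to $\mathcal{F}$ being separable in $T$ over $F(\mathbf A)$. By specialization it suffices to exhibit a single point $\mathbf a \in \bar F^m$ for which $\mathcal{F}(\mathbf a, T) \in \bar F[T]$ has no multiple root.

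I would argue by a dimension count on the incidence variety
\[
Z = \{(\mathbf a, \beta) \in \AAA^m_{\bar F} \times \AAA^1_{\bar F} : \mathcal{F}(\mathbf a, \beta) = 0 = \partial_T \mathcal{F}(\mathbf a, \beta)\}.
\]
For each fixed $\beta \in \bar F$, the fiber of the projection $Z \to \AAA^1_{\bar F}$ over $\beta$ is the affine subset of $\bar F^m$ cut out by the two linear equations
\[
v_1(\beta) \cdot \mathbf a = -f_0(\beta), \qquad v_2(\beta) \cdot \mathbf a = -f_0'(\beta),
\]
where $v_1(T) = (T+\alpha_1, T^2+\alpha_2, \ldots, T^m+\alpha_m)$ and $v_2(T) = (1, 2T, 3T^2, \ldots, m T^{m-1})$. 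Provided $v_1$ and $v_2$ are $\bar F(T)$-linearly independent, these equations are independent for all but finitely many $\beta$, so the generic fiber has dimension at most $m-2$ while the finitely many exceptional fibers have dimension at most $m-1$. A routine fibration bound then yields $\dim Z \le m-1$, hence the projection $\pi(Z)$ is a proper subvariety of $\AAA^m_{\bar F}$, and any $\mathbf a \notin \pi(Z)$ provides a separable specialization.

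The one real step is therefore to verify that $v_1$ and $v_2$ are $\bar F(T)$-linearly independent in every characteristic, and this is where the hypothesis $m \ge 2$ enters, giving access to a second coordinate. Writing out a putative proportionality $v_2 = \lambda v_1$ and comparing the $i = 1$ and $i = 2$ components reduces the question to the polynomial identity $T^2 + 2\alpha_1 T - \alpha_2 \equiv 0$ in $\bar F[T]$, whose leading coefficient equals $1$; this is nonzero in every characteristic and delivers the desired contradiction. I expect this characteristic-free linear independence to be the main (but mild) obstacle, after which the dimension count and conclusion are essentially automatic.
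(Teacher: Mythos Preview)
Your argument is correct. The linear-independence check is the crux and works as you say: from $v_2 = \lambda v_1$ the first two coordinates force $T^2 + 2\alpha_1 T - \alpha_2 = 0$, whose leading coefficient is $1$ in any characteristic. One small point you leave implicit is why the exceptional fibers have dimension at most $m-1$ rather than $m$: this is because the first entry of $v_2(\beta)$ is $1$, so the second linear equation is never vacuous. Also, the projection $\pi$ is not proper, but $\pi(Z)$ is in fact the zero locus of $\disc_T\mathcal{F}$ (the leading $T$-coefficient of $\mathcal{F}$ lies in $F^\times$ since $\deg f_0 > m$, so the degree does not drop under specialization and the discriminant specializes correctly), hence closed; alternatively one passes to the Zariski closure of $\pi(Z)$, which still has dimension at most $m-1$.

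The paper takes a different, purely algebraic route: it shows directly that $\mathcal{F}$ is irreducible in $T$ by exploiting that $\mathcal{F}$ is linear in $A_1$. Writing $\mathcal{F} = (T+\alpha_1)A_1 + \mathcal{G}$ with $\mathcal{G} \in F(A_2,\ldots,A_m)[T]$, Gauss's lemma gives irreducibility unless $T+\alpha_1 \mid \mathcal{G}$, and in that residual case one checks that the quotient $\mathcal{H} = \mathcal{F}/(T+\alpha_1)$ is itself irreducible and separable with $\mathcal{H}(-\alpha_1) \neq 0$. Since $\partial_T\mathcal{F}$ is linear in $A_1$ and hence nonzero, irreducibility yields separability. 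Compared with your approach, the paper's argument is more elementary (no geometry, no passage to $\bar F$) and in the generic case yields the stronger conclusion of irreducibility; your incidence-variety method is more uniform, avoids the case split, and would adapt readily to variants where one perturbs more coefficients.
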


\begin{proof}
It suffices to show that $\mathcal{F}$ is irreducible in $T$, because $\mathcal{F}'$ is linear in $A_1$ and in particular non-zero.
Since $\mathcal{F}$ is primitive in $T$, it is irreducible in $T$ if and only if it is irreducible in $R=F(A_2,\ldots, A_m)[A_1,T]$ by Gauss's lemma.
Since
\[
\mathcal{F} = (T+\alpha_1) A_1 + \mathcal{G},
\]
with $\mathcal{G}=f_0(T) + \sum_{i>1} A_i(T^{i}+\alpha_i)$, either  $\mathcal{F}$ is  primitive in $A_1$, then again by Gauss it is irreducible in $A_1$ and thus in $T$, or $T+\alpha_1$ divides $\mathcal{G}$ in $R$.
In the latter case, $\mathcal{H}= A_1+\frac{\mathcal{G}}{T+\alpha_1}$ is irreducible in $T$ (again primitivity and linearity) and $\deg_{A_2}\frac{\partial\mathcal{H}}{\partial T}=1$ hence it is  non-zero, so $\mathcal{H}$ is separable in $T$.  Moreover, $\mathcal{H}|_{T=-\alpha_1} \neq 0$, so we get that $\mathcal{F} = (T+\alpha_1) \mathcal{H}$ is separable, as needed.
\end{proof}

\begin{lemma}\label{LemSym}

Let $F$ be an algebraically  closed field of characteristic $p$. Let $\mathbf{A}=(A_0,\ldots,A_m)$ be an $(m+1)$-tuple of variables, $m \ge 1$. Let $f_0(T) \in F[T]$ be a monic polynomial of degree $n>m$. Let $f(T) = f_0(T) + \sum_{i=0}^{m} A_iT^i$ be a polynomial with coefficients in $K=F(\mathbf{A})$. Let $L$ be the splitting field of $f(T)=\prod_{i=1}^{n} (T-y_i)$.

Let $D(T)=\frac{r_1(T)}{r_2(T)} \in F(T)^{\times}$ be a reduced rational function with $\deg r_2 \ge \deg r_1$, and $r_2(T) = c\prod_{j=1}^{d} (T-\alpha_j)$ ($c \in F^{\times}, \alpha_j \in F$). We have
\begin{equation}\label{eq:genArtin}
\sum_{i=1}^{n} D(y_i) = \frac{h(\mathbf{A})}{\prod_{j=1}^{d} f(\alpha_j)}
\end{equation}
where $h \in F[\mathbf{A}]$ is coprime to $\prod_{j=1}^{d} f(\alpha_j)$ as a polynomial in $A_0$.
\end{lemma}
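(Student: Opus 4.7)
The plan is to first establish the stated shape of the identity by symmetric-function manipulations, then reduce the coprimality assertion to a local pole-order calculation at each irreducible factor of the denominator, and finally execute that calculation via Hensel's lemma in the completion.

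Since $\sum_i D(y_i)$ is a symmetric function of $y_1,\dots,y_n$, clearing denominators gives
\[
\sum_{i=1}^n D(y_i) = \frac{\sum_{i=1}^n r_1(y_i)\prod_{j \ne i} r_2(y_j)}{\prod_{i=1}^n r_2(y_i)}.
\]
Both numerator and denominator are symmetric in the $y_i$, hence polynomials in the elementary symmetric functions of the $y_i$, which are (up to sign) the coefficients of $f(T)$; they are therefore polynomials in $\mathbf{A}$ with coefficients in $F$. Using $f(\alpha_j) = \prod_i(\alpha_j-y_i) = (-1)^n\prod_i(y_i-\alpha_j)$, one gets $\prod_i r_2(y_i) = (-1)^{nd}c^n\prod_j f(\alpha_j)$, whence $\sum_i D(y_i) = h/\prod_j f(\alpha_j)$ for some $h\in F[\mathbf{A}]$.

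For coprimality, I work in $R[A_0]$ with $R=F(A_1,\dots,A_m)$. Group the $\alpha_j$'s into distinct values $\alpha^{(1)},\dots,\alpha^{(s)}$ with multiplicities $e_1,\dots,e_s$, and set $v^{(k)} = -f_0(\alpha^{(k)})-\sum_{i\ge 1}A_i(\alpha^{(k)})^i \in R$, so that $\prod_j f(\alpha_j) = \prod_{k=1}^s(A_0-v^{(k)})^{e_k}$. The $v^{(k)}$ are pairwise distinct in $R$ because, using $m\ge 1$, the $A_1$-coefficient of $v^{(k)}-v^{(k')}$ equals $-(\alpha^{(k)}-\alpha^{(k')})\ne 0$. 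Thus coprimality of $h$ with $\prod_j f(\alpha_j)$ in the UFD $R[A_0]$ reduces to showing, for each $k$, that $\sum_i D(y_i)$ has a pole of order exactly $e_k$ at the prime $(A_0-v^{(k)})$.

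For the pole-order analysis, set $\pi=A_0-v^{(k)}$ and pass to the completion $R\llbracket\pi\rrbracket$, so that $f(T) = \tilde f(T)+\pi$ with $\tilde f := f|_{A_0=v^{(k)}}$. The element $\alpha^{(k)}$ is a simple root of $\tilde f$ over $R$: one has $\tilde f(\alpha^{(k)})=0$ and the $A_1$-coefficient of $\tilde f'(\alpha^{(k)})\in R$ equals $1\ne 0$. Hensel's lemma lifts $\alpha^{(k)}$ to a root $y_1\in R\llbracket\pi\rrbracket$ with $y_1-\alpha^{(k)}\in\pi R\llbracket\pi\rrbracket^\times$. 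Dividing, $f(T)=(T-y_1)h(T)$ with $h\in R\llbracket\pi\rrbracket[T]$ reducing mod $\pi$ to $g(T):=\tilde f(T)/(T-\alpha^{(k)})$; hence the remaining roots $y_2,\dots,y_n$ are integral over $R\llbracket\pi\rrbracket$ and reduce mod $\pi$ to roots of $g$. A direct calculation gives $g(\alpha^{(k)}) = \tilde f'(\alpha^{(k)})\ne 0$ and, for $k'\ne k$, $g(\alpha^{(k')}) = (v^{(k)}-v^{(k')})/(\alpha^{(k')}-\alpha^{(k)})\ne 0$, so none of these reductions lies in $\{\alpha^{(1)},\dots,\alpha^{(s)}\}$. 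Consequently $v_\pi(r_2(y_1))=e_k$ while $v_\pi(r_2(y_j))=0$ for $j\ge 2$; combined with $r_1(\alpha^{(k)})\ne 0$ (from $\gcd(r_1,r_2)=1$), this yields $v_\pi(D(y_1))=-e_k$ and $v_\pi(D(y_j))\ge 0$ for $j\ge 2$. The minimum is uniquely attained, so $v_\pi(\sum_i D(y_i))=-e_k$, which forces $h(v^{(k)})\ne 0$ in $R$. The main technical obstacle is precisely this last step---ensuring that the ``other'' roots $y_2,\dots,y_n$ never collide with a pole of $D$ after specialization---where the key inputs are pairwise distinctness of the $v^{(k)}$ (requiring $m\ge 1$) and the computation of $g(\alpha^{(k')})$.
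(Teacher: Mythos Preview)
Your proof is correct and takes a genuinely different route from the paper's.

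The paper reduces to the special case $D(T)=1/(T-\alpha)^k$ via partial fractions, then computes the power sums $p_k=\sum_i (y_i-\alpha)^{-k}$ explicitly through Newton's identities applied to the auxiliary polynomial $g(T)=f(1/T+\alpha)T^n/f(\alpha)$; the coprimality comes out of tracking which coefficients of $g$ depend on $A_0$. Your argument instead establishes the shape in one stroke by symmetric functions and then pins down the pole order at each linear factor $A_0-v^{(k)}$ by passing to the completion $R\llbracket\pi\rrbracket$, Hensel-lifting the one ``bad'' root, and showing the remaining roots stay away from the poles of $D$. Both proofs ultimately lean on $m\ge 1$ through the variable $A_1$: in the paper to ensure the leading Newton term $e_1^k$ has nonzero numerator, in yours to guarantee $\tilde f'(\alpha^{(k)})\ne 0$ and that the $v^{(k)}$ are pairwise distinct. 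What your approach buys is conceptual cleanliness (no partial fractions, no Newton recursion) and, incidentally, independence from the hypothesis $\deg r_2\ge\deg r_1$; what the paper's approach buys is that it stays entirely within elementary polynomial algebra without invoking completions or Hensel.

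One small presentational point: when you write ``$v_\pi(r_2(y_j))=0$ for $j\ge 2$'' you are implicitly extending $v_\pi$ to a splitting field of $h(T)$ over $R((\pi))$. This is harmless, since the $y_j$ for $j\ge 2$ are integral over $R\llbracket\pi\rrbracket$ and $\prod_{j\ge 2}(y_j-\alpha^{(k')})=\pm h(\alpha^{(k')})$ is a unit; but it might be cleaner to phrase the conclusion directly as $\sum_{j\ge 2}D(y_j)\in R\llbracket\pi\rrbracket$, obtained by writing this symmetric expression as a polynomial in the coefficients of $h$ divided by a product of the units $h(\alpha^{(k')})^{e_{k'}}$.
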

\begin{proof}
We first prove \eqref{eq:genArtin} in the special case $D(T) = 1/(T-\alpha)^k$. Let 
\begin{equation}\label{eq:GExplicit}
g(T) := \frac{f(\frac{1}{T}+\alpha)T^n}{f(\alpha)}=\frac{f_0(\frac{1}{T}+\alpha)T^n + \sum_{i=0}^{m} A_i (1+\alpha T)^iT^{n-i}}{f(\alpha)}.
\end{equation}
We have
\begin{equation}\label{eq:g}
g(T) = \prod_{i=1}^{n} (T-\frac{1}{y_i-\alpha}).
\end{equation}
By Newton's identities, if $p_k := \sum_{i=1}^{n} \frac{1}{(y_i-\alpha)^k}$ and $e_i:=\sum_{1\le a_1 < a_2 < \ldots < a_i \le n} \prod_{j=1}^{i} \frac{1}{y_{a_j}-\alpha}$, then
\begin{equation}\label{eq:NewtonGirard}
p_k =\sum_{\substack{\nu_1+2\nu_2 +\ldots +k\nu_k=k\\ \nu_1\geq0,\ldots, \nu_k\geq 0}}(-1)^k \frac{k(\nu_1+\ldots+\nu_k-1)!}{\nu_1!\nu_2!\ldots \nu_k!} \prod_{i=1}^{k}(-e_i)^{\nu_i},
\end{equation}
and the coefficients are in fact integers. By \eqref{eq:GExplicit} and \eqref{eq:g}, $e_j$ has denominator $f(\alpha)$ and numerator independent of $A_0,\ldots,A_{j-1}$.
Thus all the summands in \eqref{eq:NewtonGirard}, except $e_1^k$, are of the form $\frac{s(\mathbf{A})}{f(\alpha)^j}$ for some $j<k$ and $s\in F[A_1,\ldots,A_m]$. Moreover, $e_1^k$ has denominator $f(\alpha)^k$ and non-zero numerator (it is $(-(f_0'(\alpha)+\sum_{i=1}^{m} iA_i \alpha^{i-1}))^k$, which depends on $A_1$). From \eqref{eq:NewtonGirard} we establish \eqref{eq:genArtin} with $D=\frac{1}{(T-\alpha)^k}$. 

To prove \eqref{eq:genArtin} for general $D$, we write $r_2$ as $c\prod_{i=1}^{e} (T-\beta_i)^{k_i}$, where $\beta_i \in F$ are distinct. The partial fraction decomposition of $D$ is given by
\begin{equation}\label{eq:DPartial}
D(T) = c_0 + \sum_{i=1}^{e} \sum_{j=1}^{k_{i}} \frac{c_{i,j}}{(T-\beta_i)^j}, \qquad (c_0, c_{i,j} \in F, c_{i,k_i} \neq 0).
\end{equation}
Applying \eqref{eq:genArtin} to each summand in \eqref{eq:DPartial} and summing, we obtain \eqref{eq:genArtin} in its generality.
\end{proof}

\begin{lemma}\label{Lem:CopDenom}
Let $F$ be a field of characteristic $p$, $K=F(A_0)$ a field of rational functions and $\wp(x)=x^p-x$. Suppose that 
\begin{equation}\label{eq:FracEquiv}
\frac{a}{b} \equiv \frac{c}{d} \bmod \wp(K)
\end{equation}
where both fractions are in reduced form, and that $b,d$ coprime. Then $b,d$ are $p$-th powers. 
\end{lemma}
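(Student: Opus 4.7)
The plan is to compare denominators after placing both sides in reduced form and then invoke unique factorization in $F[A_0]$. First, write the difference $\frac{a}{b}-\frac{c}{d} = \wp(e)$ with $e \in K$, and express $e = u/v$ with $u, v \in F[A_0]$ satisfying $\gcd(u,v) = 1$. A direct computation yields
\[
\wp(e) \;=\; e^p - e \;=\; \frac{u^p - u v^{p-1}}{v^p}.
\]
The crucial observation, which is the main technical point of the proof, is that this fraction is itself in reduced form: modulo $v$ we have $u^p - u v^{p-1} \equiv u^p$, and $\gcd(u,v) = 1$ forces $\gcd(u^p, v) = 1$, hence $\gcd(u^p - uv^{p-1}, v^p) = 1$.

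Next I would verify that the difference $\frac{a}{b} - \frac{c}{d} = \frac{ad - bc}{bd}$ is also in reduced form. For any irreducible $\pi$ dividing $b$ one has $\pi \nmid a$ (since $a/b$ is reduced) and $\pi \nmid d$ (by the hypothesis $\gcd(b,d)=1$), so $\pi \nmid ad$; but $\pi \mid bc$, whence $\pi \nmid ad - bc$. A symmetric argument handles irreducibles dividing $d$, establishing $\gcd(ad - bc,\, bd) = 1$.

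Equating the two reduced-form expressions then gives $bd = \lambda\, v^p$ for some $\lambda \in F^\times$, since a rational function in reduced form determines its numerator and denominator up to a common scalar in $F^\times$. Because $F[A_0]$ is a UFD and $\gcd(b,d) = 1$, for each monic irreducible $\pi$ of $F[A_0]$ the entire factor $\pi^{p\cdot \ord_\pi(v)}$ of $v^p$ sits in exactly one of $b$ or $d$. Consequently each of $b, d$ is a $p$-th power (strictly so after normalising $b,d,v$ to be monic, and otherwise up to a unit in $F^\times$). The bulk of the argument is bookkeeping; the only genuine content is the verification that $\wp(u/v)$ is already reduced, after which the conclusion is a formal consequence of unique factorization.
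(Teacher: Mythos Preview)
Your proof is correct and follows essentially the same line as the paper's own argument: write the difference as $\wp(z)$ with $z=u/v$ in lowest terms, observe that the denominator of $\wp(z)$ in reduced form is $v^p$, identify this with $bd$ up to a unit, and use $\gcd(b,d)=1$ to split the $p$-th power between $b$ and $d$. The paper's version is terser (it does not explicitly verify that $(ad-bc)/(bd)$ and $(u^p-uv^{p-1})/v^p$ are already in reduced form), so your write-up simply fills in details that the paper leaves to the reader; your parenthetical remark about the unit $\lambda\in F^\times$ is likewise a point the paper glosses over.
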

\begin{proof}
From \eqref{eq:FracEquiv}, $\frac{ad-cb}{bd} = z^p-z$ for $z \in K$. Writing $z=\frac{z_1}{z_2}$ in reduced form, it follows that the denominator of $z^p-z$ is a perfect $p$-th power, and so $bd$ is a perfect $p$-th power, and the conclusion follows since $b,d$ are coprime.
\end{proof}

\subsubsection{Proof of Lemma~\ref{computation_abelian}}
\label{sec:proofgeneralkummer}
Let $\bar{F}$ be an algebraic closure of $F$. Since
\[
\Gal(M\bar{F} / K\bar{F}) \leq \Gal(M/K) \leq G\wr S_n,
\]
it suffices to prove that $\Gal(M\bar{F} / K\bar{F}) \cong G\wr S_n$. Therefore we may assume w.l.o.g.\ that $F=\bar{F}$. In particular, if $p \mid |G|$ and $p \neq \mathrm{char}(F)$, the field $F$ contains a primitive $p$-th root of unity.

As explained before Lemma~\ref{computation_abelian} it suffices to prove that $\Gal(M/L) = G^n$. To do this we apply Lemma~\ref{lem:fieldfrattini} to $M/L$ with the group $G^n$ instead of $G$.

For a prime $p \mid |G|$ with $p \nmid \mathrm{char}(F)$, let $D_1(T), \ldots, D_r(T)\in F[T]$ be $p$-powerfree polynomials that are $p$-independent such that
\[
E^{\Phi_p(G)} = L(\sqrt[p]{D_1(T)},\ldots, \sqrt[p]{D_r(T)}).
\]
If $p \mid \mathrm{char}(F), |G|$, let $D_1(T),\ldots,D_r(T) \in F(T)$ be rational functions that are $p$-independent such that
\begin{equation}\label{eq:ArtinSetup}
E^{\Phi_p(G)} = L(\beta_1,\ldots,\beta_r), \qquad \beta_i^p-\beta_i = D_i(T). 
\end{equation}
So the $\alpha_{i,p}$-s of Lemma~\ref{lem:fieldfrattini} can be taken to be $D_i(y_j)$ with $i=1,\ldots, r$ and $j=1,\ldots, n$.
Then, it suffices to prove that the $D_{i}(y_j)$ are $p$-independent to finish the proof. 
We separate this part into two cases depending on whether $\mathrm{char}(F)=p$ or not.

\vspace{5pt}
\noindent \textbf{Case A: $\mathrm{char}(F)\neq p$}

\vspace{5pt}
\noindent \textbf{Step 1:} $r=1$.

Put $D = D_1$ and $w_j= D(y_j)$. Let $V$ be the space of linear dependencies of the $w_j$-s:
\begin{equation}\label{V_def_Kummer}
V= \{(v_1,\ldots, v_n) \in \FF_p^n :  w_{1}^{v_1}\cdots  w_{n}^{v_n} \equiv 1 \mod (L^{\times})^p\}.
\end{equation}
We need to prove that $V=0$. Since $V$ is an invariant subspace of $\FF_p^n$ under the action of $S_n=\Gal(L/K)$,  by Lemma~\ref{lem_invsub}  it suffices to prove that $V\neq V_1,V_{n-1}, V_n$.

\vspace{5pt}
\noindent \textbf{Sub-step 1a:} $(1,\ldots, 1) \not\in V$; hence $V\neq V_1,V_{n}$.

	We assume in contradiction that $(1,\ldots,1) \in V$.  In other words, there exists $z\in L$ such that
	\begin{equation}\label{eq:zp=prodDi}
	D(y_1) \cdots  D(y_n) = z^p.
	\end{equation}
	We factor $D$ over $F$ (recall that we reduced to the case $F=\bar{F}$):
	\begin{equation}\label{eq:D}
	D(T) = c \prod_{j=1}^{d} (T-\alpha_j), \qquad \alpha_j \in F, c \in F^{\times}.
	\end{equation}
	Since
	\begin{equation*}
	(-1)^n f(\alpha_j) = \prod_{i=1}^{n} (y_i - \alpha_j) ,
	\end{equation*}
	and using  \eqref{eq:zp=prodDi} and \eqref{eq:D} we obtain
	\begin{equation}\label{dthroughf}
	z^p = D(y_1)  \cdots  D(y_n) =c^n
			 \prod_{j=1}^{d} \prod_{i=1}^{n} (y_i - \alpha_j) = (-1)^{nd} c^n \prod_{j=1}^{d} f(\alpha_j).
	\end{equation}
	In particular, $K(z)/K$ is a Galois subextension of the $S_n$-extension $L/K$ of degree $p$ or $1$. Put $H=\Gal(L/K(z))$ so that by Lemma~\ref{lem_Sn},  either $H=1$, $H=S_n$ or $H=A_n$, where the latter is possible only if $p=2$.
	
	If $H = 1$, then $K(z) = L$, so $n! = 1$ or $n! = p$, which contradicts  $n > 2$. Thus, $H\neq 1$.
	
	Now we show that $H\neq S_n$. If $H=S_n$, then $[K(z):K]=1$. Therefore $z \in K$, as such $z$ is a rational function in the $A_i$-s. From \eqref{dthroughf}, it follows that $\prod_{j=1}^{d} f(\alpha_j)$ is a $p$-th power in $K$. Each $f(\alpha_j)$, as a polynomial in $A_0$, is linear with leading coefficient $1$, so it must appear a multiple of $p$ times. On the other hand, by comparing the coefficient of $A_1$, the equality
$f(\alpha_j)=f(\alpha_k)$  implies that $\alpha_j=\alpha_k$. As $D$ is $p$-th powerfree in $F[T]$ by assumption, we arrive to contradiction.
	
	So $H = A_n$ and $p=2$ and in particular the characteristic is $\neq 2$.
	 There is a unique field $K'$ such that $K \subseteq K' \subseteq L$ with $\Gal(L/K') = A_n$, namely $K' = K(\sqrt{\disc(f)})$. Thus, $ K(z) = K(\sqrt{\disc(f)})$, and so by \eqref{dthroughf}
	 \begin{equation}\label{eq:falphajdisc}
		(-1)^{nd} c^n  \prod_{j=1}^{d} f(\alpha_j) \cdot \disc(f) \in (K^{\times})^2.
		\end{equation}
	The linear-in-$A_0$ polynomial $f(\alpha_j)$ is coprime to $\disc(f)$. Indeed, put $A = f(\alpha_j)$ and apply Lemma~\ref{LemSep} to $\tilde{f}(T)= f(T)-A$, to obtain that $\tilde{f}(T)$ is separable in $T$ and thus by Lemma~\ref{LemBBFDisc},  $\disc(f)$ is not divisible by $A=f(\alpha_1)$, hence coprime to it, as needed. But then $\disc(f)$ is a square by \eqref{eq:falphajdisc}, which contradicts the fact that $\Gal(L/K)=S_n$.

\vspace{5pt}
\noindent \textbf{Sub-step 1b:} $(1,-1 , 0, \ldots , 0) \notin V$; hence $V \neq V_{n-1}$ and $V \neq V_n$.

Assume in contradiction that $(1,-1,0,...,0) \in V$. So, there exists $z \in L$ such that
\begin{equation}\label{dy_1=dy_2^p_Kummer}
D(y_1)=D(y_2)  z^p.
\end{equation}
Consider the following diagram of fields
\[
\xymatrix{
	L \ar@{--}@/^1pc/@<2pc>[dd]^{S_{n-2}} \ar@{-}[d] \ar@/_1.5pc/@{--}@<-2pc>[dddd]_{S_n} \\ K(y_1 , y_2)(z) \ar@{-}[d]^{1 \text{ or } p} \\
	K(y_1 , y_2) \ar@{-}[d]^{n-1} \\
	K(y_1) \ar@{-}[d] \\
	K}
\]
with $\Gal(L/K(y_1,y_2)) = S_{n-2}$ and  $K(y_1,y_2)(z)/K(y_1,y_2)$ Galois  of degree $1$ or $p$.

Assume in contradiction that  $[K(y_1,y_2)(z):K(y_1,y_2)]=1$, then $z\in K(y_1,y_2)$. Applying the norm map
\begin{equation}\label{nfirstnorm}
N\colon K(y_1,y_2)\to K(y_1)
\end{equation}
on \eqref{dy_1=dy_2^p_Kummer}, multiplying by $D(y_1)$, and considering \eqref{dthroughf}, we obtain
\begin{equation} \label{equation_norms_Kummer} D(y_1)^n=D(y_1) D(y_2) \cdots D(y_n) {N(z)^p}=  (-1)^{dn}c^n \prod_{j=1}^{d} f(\alpha_j) {N(z)^p}.\end{equation}
The field $K(y_1)$ is the field of rational functions in $A_0,A_2,\ldots ,A_m,y_1$ over $F$ since $A_1 = -\frac{A_0 + A_2y_1^2 + \cdots }{y_1}$.

This implies that $f(\alpha_j)$ and $f(\alpha_k)$, as elements in $F(A_2,\cdots,A_m,y_1)[A_0]$ are associate if and only if $\alpha_j=\alpha_k$. 
Since $D$ is $p$-th powerfree, for every $j$ the multiplicity of $f(\alpha_j)$ in the right hand side product in \eqref{equation_norms_Kummer} is $\not\equiv 0 \mod p$. On the other hand, on the left hand side of \eqref{equation_norms_Kummer} the multiplicity of $f(\alpha_j)$ is $0$ since $A_0$ does not appear. This contradicts \eqref{equation_norms_Kummer}, therefore $[K(y_1,y_2)(z):K(y_1,y_1)]=p$.

By Lemma~\ref{lem_Sn}, $p=2$ and thus the characteristic is $\neq 2$. As $L/K(y_1,y_2)$ is an $S_{n-2}$-extension, it has a unique subextension of degree $2$ which is the fixed field of $A_{n-2} = A_n\cap S_{n-2}$ hence is generated by $\sqrt{\disc(f)}$. But $z$ also generates a quadratic subextension, hence
\begin{equation}\label{kummer2y1y2}
\frac{D(y_1)}{D(y_2)} \disc(f)  = z^2\disc(f)\in (K(y_1,y_2)^{\times})^2.
\end{equation}
Apply the norm map \eqref{nfirstnorm} to obtain
\begin{equation} \label{equation_norms_Kummer2} \frac{D(y_1)^n}{D(y_1) \cdots D(y_n)} \cdot \disc^{n-1}(f) \in (K(y_1)^{\times})^2.
\end{equation}
If $n$ is even, then $(1,\ldots, 1)\in V_{n-1}$ (as $p=2$). Therefore, by Sub-step 1a, $V\neq V_{1}, V_{n-1}, V_n$, that is, $V=0$, in contradiction to the assumption that $(1,-1,0,\ldots,0)\in V$. Thus, $n$ is odd. By \eqref{equation_norms_Kummer2} and \eqref{dthroughf}
\begin{equation}\label{kummer4y1y2}
D(y_1) \equiv \prod_{j=1}^{d} f(\alpha_j) \mod (F(y_1,A_0,A_2,\cdots, A_m)^{\times})^2.
\end{equation}
As $D$ is not a square and each of the $f(\alpha_j)$ is linear in $A_0$, and by the fact that $f(\alpha_j)$ and $f(\alpha_k)$ are associate only if $ \alpha_j=\alpha_k$, we must have that $A_0$ appears in the left hand side, which is a contradiction.

\vspace{5pt}
\noindent \textbf{Step 2:} General $r$.

Put $w_{i,j} = D_{i}(y_j)$ and as before let $V$ be the space of linear dependencies:
\begin{equation}\label{eq:MultiKummer}
V=\{ (v_{i,j})_{1 \le i \le r, 1\le j \le n} \in \FF_p^{nr} : \prod_{i=1}^{r}\prod_{j=1}^{n} D_i^{v_{i,j}}(y_j) \in (L^{\times})^p\}
\end{equation}
and we want to prove that  $V=0$.

Here the action of $S_n=\Gal(L/K)$ on the $w_{i,j}$ is by permuting the $j$-th index, so $V$ is an $S_n$-invariant space with respect of the action  of $S_n$ given by permuting  the columns.

Assume in contradiction that $V \neq 0$. We begin by constructing a matrix $B$ in $V$ of rank $1$. Let $A \in V$ be a non-zero matrix. Denote its columns by $v_1,\cdots,v_n$. If $\mathrm{rk}(A) =1$ we take $B=A$. Otherwise, assume without loss of generality that $v_1$ and $v_2$ are linearly independent. Then the matrix
\begin{equation*}
B = A - \left( v_2 \mid v_1 \mid v_3 \mid v_4 \mid \cdots \right) = \left( v_1-v_2 \mid v_2-v_1 \mid 0 \mid \cdots \right) \in V.
\end{equation*}
has rank $1$, as needed.

There are non-zero vectors $\mathbf{a}=(a_1,\ldots,a_r) \in \FF^r_p$, $\mathbf{b}=(b_1,\ldots,b_n) \in \FF^n_{p}$ such that
\begin{equation*}
B_{i,j} = a_i \cdot b_j.
\end{equation*}
The relation $B \in V$ is equivalent to
\begin{equation}\label{eq:manyrkummer}
\prod_{i=1}^{r} \prod_{j=1}^{n} D_{i}^{a_i \cdot b_j}(y_j) \in (L^{\times})^p.
\end{equation}
Let $D(T):=\prod_{i=1}^{r} D_i^{a_i}(T)$. We have $\prod_{j=1}^{n} D^{b_j}(y_j) \in (L^{\times})^p$, which by Step~1 implies that $b_j = 0$ for all $j$, contradicting the fact that $\mathbf{b} \neq 0$.  This concludes the proof of Case A.

\vspace{5pt}
\noindent \textbf{Case B: $\mathrm{char}(F)=p$}

From now on, $\wp(x)=x^p-x$. Let $v$ be the discrete valuation at the infinite prime; that is to say,  $v(h(T)/g(T)) = \deg h-\deg g$. Since $E/\FF_q(T)$ is tamely ramified at $v$, the extension generated by a root of $X^p-X=D_i(T)$ is unramified at $v$. This implies that there exists some $g_i(T)$ with $v(D_i+g_i^p-g_i)\geq 0$. We may replace $D_i$ by $D_i+g_i^p-g$, to assume without loss of generality that $v(D_i(T))\geq 0$. We may further assume that the roots in the denominators of $D_i(T)$ have multiplicity indivisible by $p$ \cite[\S2]{madden1978}.
The proof goes analogously to Case A: 

\vspace{5pt}
\noindent \textbf{Step 1:} $r=1$.

Put $D = D_1$ and $w_j= D(y_j)$. Let $V$ be the space of linear dependencies of the $w_j$-s:
\begin{equation}\label{V_def_Artin}
V= \{(v_1,\ldots, v_n) \in \FF_p^n :  \sum_{i=1}^{n} w_i v_i \equiv 0 \mod \wp(L)\}
\end{equation}
We need to prove that $V=0$. Since $V$ is an invariant subspace of $\FF_p^n$ under the action of $S_n=\Gal(L/K)$,  by Lemma~\ref{lem_invsub}  it suffices to prove that $V\neq V_1,V_{n-1}, V_n$.

\vspace{5pt}
\noindent \textbf{Sub-step 1a:} $(1,\ldots, 1) \not\in V$; hence $V\neq V_1,V_{n}$.

	We assume in contradiction that $(1,\ldots,1) \in V$.  In other words, there exists $z\in L$ such that
	\begin{equation}\label{eq:zp-z=sumDi}
	D(y_1) + \ldots + D(y_n) = z^p-z.
	\end{equation}
	Write $D=\frac{r_1(T)}{r_2(T)}$ where $r_1$, $r_2$ are coprime, and $r_2(T) = c\prod_{j=1}^{d} (T-\alpha_j)$ ($c \in F^{\times}, \alpha_j \in F$). By Lemma \ref{LemSym},
	\begin{equation}\label{eq:ArtinIden}
	z^p-z =\sum_{i=1}^{n} D(y_i) = \frac{h(\mathbf{A})}{\prod_{j=1}^{d} f(\alpha_j)},
	\end{equation}
	where $h(\mathbf{A}) \in F[\mathbf{A}]$ is coprime to the denominator as polynomials in $A_0$. In particular, $K(z)/K$ is a Galois subextension of the $S_n$-extension $L/K$ of degree $p$ or $1$. Put $H=\Gal(L/K(z))$ so that by Lemma~\ref{lem_Sn},  either $H=1$, $H=S_n$ or $H=A_n$, where the latter is possible only if $p=2$.
	
	If $H = 1$, then $K(z) = L$, so $n! = 1$ or $n! = p$, which contradicts  $n > 2$. Thus, $H\neq 1$.
	
	Now we show that $H\neq S_n$. If $H=S_n$, then $[K(z):K]=1$. Therefore $z \in K$. The denominator of $z^p-z$ is a (possibly trivial) $p$-th power in $K$, so that by \eqref{eq:ArtinIden} it follows $\prod_{j=1}^{d} f(\alpha_j)$ is a $p$-th power in $K$. Each $f(\alpha_j)$, as a polynomial in $A_0$, is linear with leading coefficient 1, so it must appear a multiple of $p$ times. On the other hand, by comparing the coefficient of $A_1$, the equality $f(\alpha_j)=f(\alpha_k)$ implies that $\alpha_j=\alpha_k$. As $\alpha_i$ have multiplicity indivisible by $p$ by our assumption on $r_2(T)$, we arrive to contradiction.

	So $H = A_n$ and $p=2$. In characteristic 2, we must use the Berlekamp discriminant $\mathrm{Berl}(f)$\footnote{See \cite{berlekamp1976} or \cite{carmon2015} for a recent use in a similar setting.} in place of the usual $\disc(f)$. There is a unique field $K'$ such that $K \subseteq K' \subseteq L$ with $\Gal(L/K') = A_n$, namely $K' = K(\delta)$ for $\delta$ which satisfies $\delta^2-\delta=\mathrm{Berl}(f)$. Thus, $ K(z) = K(\delta)$, or equivalently 
	\begin{equation}
	\wp(z) \equiv \wp(\delta) \bmod \wp(K),
	\end{equation}
	which by \eqref{eq:ArtinIden} becomes 
	\begin{equation}\label{eq:BerlCont}
	\mathrm{Berl}(f) \equiv \frac{h(\mathbf{A})}{\prod_{j=1}^{d} f(\alpha_j)} \bmod \wp(K).
	\end{equation}
	As in Sub-step 1a in the Kummer case, the linear-in-$A_0$ polynomial $f(\alpha_j)$ is coprime to $\disc(f)$. The Berlekamp discriminant is of the form $N(f)/\disc(f)$ for some polynomial $N$ in the coefficients of $f$. Thus, the denominators of the two fractions in \eqref{eq:BerlCont} are coprime as polynomials in $A_0$. By Lemma \ref{Lem:CopDenom}, this implies that the denominator $\prod_{j=1}^{d} f(\alpha_j)$ is a square, contradicting the fact that the $\alpha_j$ have odd multiplicity by our assumption on $r_2(T)$.

\vspace{5pt}
\noindent \textbf{Sub-step 1b:} $(1,-1 , 0, \ldots , 0) \notin V$; hence $V \neq V_{n-1}$ and $V \neq V_n$.

Assume in contradiction that $(1,-1,0,...,0) \in V$. So, there exists $z \in L$ such that
\begin{equation}\label{dy_1=dy_2^p_Artin}
D(y_1)=D(y_2) +z^p-z.
\end{equation}
We have $\Gal(L/K(y_1,y_2)) = S_{n-2}$ and  $K(y_1,y_2)(z)/K(y_1,y_2)$ Galois  of degree $1$ or $p$. Assume in contradiction that  $[K(y_1,y_2)(z):K(y_1,y_2)]=1$, then $z\in K(y_1,y_2)$. Applying the trace map
\begin{equation}\label{nfirsttrace}
T\colon K(y_1,y_2)\to K(y_1).
\end{equation}
on \eqref{dy_1=dy_2^p_Artin}, adding by $D(y_1)$, and considering \eqref{eq:ArtinIden}, we obtain
\begin{equation} \label{equation_traces_Artin} nD(y_1)=  \frac{h(\mathbf{A})}{\prod_{j=1}^{d} f(\alpha_j)} + T(z)^p-T(z) \equiv \frac{h(\mathbf{A})}{\prod_{j=1}^{d} f(\alpha_j)} \bmod \wp(K(y_1)).
\end{equation}
The field $K(y_1)$ is the field of rational functions in $A_0,A_2,\ldots ,A_m,y_1$ over $F$ since $A_1 = -\frac{A_0 + A_2y_1^2 + \cdots }{y_1}$. 

This implies that $f(\alpha_j)$ and $f(\alpha_k)$, as elements in $F(A_2,\cdots,A_m,y_1)[A_0]$ are associate if and only if $\alpha_j=\alpha_k$. The denominator in the left hand side of \eqref{equation_traces_Artin} does not involve $A_0$ and so it is coprime to the denominator in the right hand side. By Lemma \ref{Lem:CopDenom}, this means that the denominator in the right hand side is a $p$-th power, contradicting the fact that for every $j$, the multiplicity of $f(\alpha_j)$ is $ \not \equiv 0\mod p$. Therefore $[K(y_1,y_2)(z):K(y_1,y_1)]=p$, and by Lemma \ref{lem_Sn}, $p=2$. 

As $L/K(y_1,y_2)$ is an $S_{n-2}$-extension, it has a unique subextension of degree $2$ which is the fixed field of $A_{n-2} = A_n\cap S_{n-2}$ hence is generated by $\delta \in L$ which satisfies $\delta^2-\delta = \mathrm{Berl}(f)$. But $z$ also generates a quadratic subextension, hence
\begin{equation}\label{artin2y1y2}
D(y_1)-D(y_2) + \mathrm{Berl}(f)   = z^2-z+\mathrm{Berl}(f)\in \wp(K(y_1,y_2)).
\end{equation}
Apply the trace map \eqref{nfirsttrace} to obtain
\begin{equation} \label{equation_traces_Artin2} nD(y_1) - (D(y_1)+\ldots+D(y_n)) +(n-1) \mathrm{Berl}(f) \in \wp(K(y_1)).
\end{equation}
If $n$ is even, then $(1,\ldots, 1)\in V_{n-1}$ (as $p=2$). Therefore, by Sub-step 1a, $V\neq V_{1}, V_{n-1}, V_n$, that is, $V=0$, in contradiction to the assumption that $(1,-1,0,\ldots,0)\in V$. Thus, $n$ is odd. By \eqref{equation_traces_Artin2} and \eqref{eq:ArtinIden}
\begin{equation}\label{artin4y1y2}
\frac{r_1(y_1)}{r_2(y_1)} \equiv \frac{h(\mathbf{A})}{\prod_{j=1}^{d} f(\alpha_j)} \mod \wp(F(y_1,A_0,A_2,\cdots, A_m)).
\end{equation}
The denominator in the left hand side does not involve $A_0$ and so it is coprime to the denominator in the right hand side as a polynomial in $A_0$. By Lemma \ref{Lem:CopDenom}, this means that the denominator in the right hand side is a square. This contradicts the fact that for every $j$, the multiplicity of $f(\alpha_j)$ is odd, and that $f(\alpha_j)$ and $f(\alpha_k)$ are associate if and only if $\alpha_j = \alpha_k$. 
	
\vspace{5pt}
\noindent \textbf{Step 2:} General $r$.

Put $w_{i,j} = D_{i}(y_j)$ and let $V$ be the space of linear dependencies:
\begin{equation}\label{eq:MultiArtin}
V=\{ (v_{i,j})_{1 \le i \le r, 1\le j \le n} \in \FF_p^{nr} : \sum_{i=1}^{r}\sum_{j=1}^{n} v_{i,j} D_i(y_j) \in \wp(L)\}
\end{equation}
and we want to prove that  $V=0$. Assume in contradiction that $V \neq 0$. As in Step 2 of the Kummer case, there exists $B$ in $V$ of rank $1$. There are non-zero vectors $\mathbf{a}=(a_1,\ldots,a_r) \in \FF^r_p$, $\mathbf{b}=(b_1,\ldots,b_n) \in \FF^n_{p}$ such that
\begin{equation*}
B_{i,j} = a_i \cdot b_j.
\end{equation*}
The relation $B \in V$ is equivalent to
\begin{equation}\label{eq:manyrartin}
\sum_{i=1}^{r} \sum_{j=1}^{n} a_i b_j D_{i}(y_j) \in \wp(L).
\end{equation}
Let $D(T):=\sum_{i=1}^{r} a_i D_i(T)$. We have $\sum_{j=1}^{n} b_j D(y_j) \in \wp(L)$, which by Step~1 implies that $b_j = 0$ for all $j$, contradicting the fact that $\mathbf{b} \neq 0$.  This concludes the proof of Case B. \qed

\section{Proof of Theorem~\ref{thm:maintechnical}}\label{sec:pfthm:maintechnical}
First we prove the assertion for $\psi = \delta_\lambda$, where $\lambda$ is a $G$-factorization type supported on $e=1$ with $\deg \lambda = n$ and $\delta_{\lambda}\in \Lambda^*$ is given by
\[
\delta_\lambda(\lambda') = \begin{cases}
1,&\mbox{if }\lambda'=\lambda,\\
0,&\mbox{otherwise.}
\end{cases}
\]
Let $C =\{h \in G \wr S_n : \lambda_h = \lambda\}  \subseteq G\wr S_n$. Since $\lambda$ is supported on $e=1$ and $\deg \lambda=n$, by Lemma~\ref{Lem_lambda_conj}, $C$ is a conjugacy class. 

Write $f_0=T^n + \sum_{i=0}^{n-1}a_iT^i$ and put $W=W_{a,m}$ as in \eqref{eq:Wam}. Every $(w_0,\ldots, w_{n-1})\in W(\FF_q)$ corresponds in a bijective way to $f = T^n +\sum_{i=0}^{n-1} w_i T^i$ with $\deg(f-f_0)\leq m$.

By the explicit Chebotarev theorem\footnote{For a version which is sufficiently uniform in the parameters see either \cite[Appendix]{ABR} or \cite[Thm.~3]{entin2018}. In the former there is a mistake in the formulation of the theorem, in the notation of loc.\ cit.\ the error term should be dependent on the complexity of $S$ and not on the complexity of $R$ and $\deg \mathcal{F}$ as written. In the setting where $R$ is a polynomial ring in several variables and $S$ is the ring generated by adding roots of a polynomial $\mathcal{F}\in R[X]$, then the complexity of $S$ is bounded in terms of the complexity of $R$ and the total degree of $\mathcal{F}$.} and by Proposition~\ref{general_computation}, 
\[
\frac{\#\{ w\in W(\FF_q) : w \mbox{ is unramified in $U$ and $\phi_w \in C$}\}}{q^{m+1}} =   \frac{|C|}{|G\wr S_n|} + O(q^{-1/2}),
\]
where the implied constant is bounded in terms of the complexity of $U$, which is bounded in terms of $|G|$, $\genus(C)$, and $n$ and hence in terms of $B$.
This finishes the proof of this case since $\frac{|C|}{|G\wr S_n|} = \left<\delta_{\lambda}(\lambda_{(\xi,\sigma)})\right>_{(\xi,\sigma)\in C}$, for unramified $w$ we have $\delta_{\lambda}(f) = \delta_{\lambda}(\phi_w)$ by Lemma~\ref{prop:frob}, there are $O_B(q^{m})$ ramified $w$ (the zeros of $D$ given in \eqref{def:D}), and so
\[
\left<\delta_{\lambda}(f)\right>_{\deg(f-f_0)\leq m}=\frac{\#\{ w\in W(\FF_q) : w \mbox{ is unramified in $U$ and $\phi_w \in C$}\}}{q^{m+1}}  + O_B(q^{-1}).
\]

For general $\psi$, we partition $\Lambda = \Lambda_1 \cup \Lambda_2\cup \Lambda_3$, where $\Lambda_1$ consists of $\lambda$-s of degree $n$ supported on $e=1$, $\Lambda_2$ consists of the other $\lambda$-s of degree $n$, and $\Lambda_3$ consists of $\lambda$-s of degree $\neq n$.
This decomposes  $\psi = \psi_1 + \psi_2 +\psi_3$ with $\psi_i$ supported on $\Lambda_i$.
Now, as $\deg(f-f_0)\leq m$ implies that $\deg f=n$, we have
\[
\left< \psi_3(f) \right>_{\deg(f-f_0)\leq m} = 0.
\]
Since $\psi_2(f)=0$ if $f$ is squarefree and there are $O_B(q^m)$ non-squarefrees satisfying $\deg(f-f_0)\leq m$ \cite[Thm.~1.3]{keating2016}, we get that
\[
\left<\psi_2(f) \right>_{\deg(f-f_0)\leq m} = O_B(q^{-1}).
\]
The function $\psi_1$ decomposes as $\psi_1 = \sum_{\lambda\in \Lambda_1} \psi_1(\lambda) \delta_{\lambda_1}$, so by the special case proved above
\[
\begin{split}
\left<\psi_1(f)\right>_{\deg(f-f_0)\leq m } &= \sum_{\lambda\in \Lambda_1}\psi_1(\lambda)\left<\delta_\lambda(f)\right>_{\deg(f-f_0)\leq m } \\
&= \sum_{\lambda\in \Lambda_1}\psi_1(\lambda)\left<\delta_\lambda(\xi,\sigma)\right>_{(\xi,\sigma)\in G\wr S_n } + O_B(q^{-1/2})\\
&=\left<\psi_1(\xi,\sigma)\right>_{(\xi,\sigma)\in G\wr S_n } + O_B(q^{-1/2}).
\end{split}
\]
This completes the proof as $\psi_1(\xi,\sigma) = \psi(\xi,\sigma)$. \qed

\section{Non-geometric extensions}
Here we explain how the results for non-geometric extensions may be reduced to  geometric extensions over a field extension:
Let $G$ be a finite group, let $E/\FF_q(T)$ be a $G$-extension and let $E^{ab}$ be the fixed field of the commutator of $G$ in $E$. Assume that $E^{ab}$ is tamely ramified at infinity and that $E$ (or equivalently $E^{ab}$) is not geometric. 
Let $\FF_{q^\nu}$ be the algebraic closure of $\FF_q$ in $E$, $C_\nu=\Gal(\FF_{q^\nu}/\FF_q)$, and $H = \Gal(E/\FF_{q^\nu}(T))$.  By replacing $E$ with $E\FF_{q^\mu}$ for some large $\mu$, we may assume without loss of generality that $G=H\rtimes C_{\nu}$. Now we apply the construction of \S\ref{GT} to the extension $E/\FF_{q^\nu}(T)$ to get that the corresponding Galois group is $\Gal(M/K)=H\wr S_n$ and we have a diagram of fields whose right column is as in Figure~\ref{figure1}
\[
\xymatrix{
	&M\ar@{-}[d]\\
L_0=K_0(y_1,\ldots, y_n) \ar@{-}[r]
	&L=K(y_1,\ldots, y_n)\ar@{-}[d]\\
K_0=\FF_q(A_0,\ldots, A_m)\ar@{-}[r]\ar@{-}[u]
	&K=\FF_{q^\nu}(A_0,\ldots, A_m)
}
\]
Now, $\Gal(L_0/K_0)=S_n$ for the same reason that $\Gal(L/K)=S_n$, and $\Gal(K/K_0) \cong \Gal(\FF_{q^\nu}/\FF_q)=C_{\nu}$. Thus, $\Gal(L/K_0) = S_n\times C_\nu$.  With a little more effort, one can verify that in fact $M/K_0$ is Galois, and that $\Gal(M/K_0) = (H\wr S_n) \rtimes C_{\nu}$ with $C_{\nu}$ acting trivially on $S_n$ and acting diagonally on $H^n$. 
Now we can apply the higher dimensional Chebotarev theorem, to get a Chebotarev theorem in short intervals for this extension.

\appendix

\section{Norms in full intervals}\label{app:1}
We use the notation of \S\ref{sec:apps}. The goal of this appendix is to compute the mean value of $b_{E/\FF_{q}(T)}$ and of $r_{E/\FF_q(T)}$ in the most general setting of the limit $q^n\to \infty$.
\begin{theorem}\label{thm:bfullint}
Let $E/\FF_q(T)$ be a non-trivial geometric $G$-extension. Then
\begin{equation}\label{eqSWL}
\left< b_{E/\FF_q(T)}(f) \right>_{f\in M_{n,q}} = K_E\binom{n+\frac{1}{|G|}-1}{n} (1+ O_{\genus(E),|G|}(\frac{1}{\sqrt{q}n})),
\end{equation}
where $K_E$ is a positive constant that satisfies
\begin{equation}\label{eq:scalarestimate}
K_E = 1+O_{\genus(E),|G|}(\frac{1}{\sqrt{q}}).
\end{equation}
\end{theorem}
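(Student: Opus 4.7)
The plan is to study the generating Dirichlet series $Z_b(u):=\sum_{f\text{ monic in }\FF_q[T]}b_{E/\FF_q(T)}(f)u^{\deg f}$, from which $\langle b_{E/\FF_q(T)}(f)\rangle_{f\in M_{n,q}}=q^{-n}[u^n]Z_b(u)$. Lemma~\ref{lem:bcriteria} gives the Euler product $Z_b(u)=\prod_P(1-u^{f(P;E)\deg P})^{-1}$, whereas the affine Dedekind zeta function of $\mathcal{O}_E$ factors as $\zeta_{\mathcal{O}_E}(u)=\prod_P(1-u^{f(P;E)\deg P})^{-g(P;E)}$. Using the ramification identity $e(P;E)f(P;E)g(P;E)=|G|$ one gets the key factorization
\[
Z_b(u)=\zeta_{\mathcal{O}_E}(u)^{1/|G|}\cdot\tilde H(u),\qquad \tilde H(u):=\prod_P(1-u^{f(P;E)\deg P})^{\frac{1}{e(P;E)f(P;E)}-1},
\]
in which only the sparse set of primes with $e(P;E)f(P;E)\geq 2$ contributes nontrivially; a direct estimate via the prime polynomial theorem shows $\tilde H(u)$ is analytic on $|u|<q^{-1/2}$ and bounded there by a constant depending only on $\genus(E)$ and $|G|$.

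By the rationality of $\zeta_{\mathcal{O}_E}$ together with the Weil conjectures for the smooth projective model of $E$, one has $\zeta_{\mathcal{O}_E}(u)=Q_E(u)/(1-qu)$ for a polynomial $Q_E\in 1+u\ZZ[u]$ of degree bounded in terms of $\genus(E)$ and $|G|$, whose zeros lie either on $|u|=q^{-1/2}$ (the Weil roots of the $L$-polynomial of the curve) or at nontrivial roots of unity (contributions of primes of $E$ above $\infty$). In particular $Q_E$ has no zeros on the open disc $|u|<q^{-1/2}$, so the principal branch of $Q_E(u)^{1/|G|}$ satisfying $Q_E(0)^{1/|G|}=1$ is well defined, single-valued, and analytic there. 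Setting $H(u):=Q_E(u)^{1/|G|}\tilde H(u)$ yields the factorization $Z_b(u)=H(u)(1-qu)^{-1/|G|}$ with $H$ analytic on $|u|<q^{-1/2}$. Evaluating at $u=1/q$, every Weil factor $(1-\alpha/q)$ with $|\alpha|=q^{1/2}$ contributes $1+O(q^{-1/2})$, every root-of-unity factor contributes $1+O(q^{-1})$, and each nontrivial Euler factor of $\tilde H$ contributes $1+O(q^{-2\deg P})$ with total $\tilde H(1/q)=1+O(q^{-1})$. Multiplying yields $K_E:=H(1/q)=1+O_{\genus(E),|G|}(q^{-1/2})$, the estimate in \eqref{eq:scalarestimate}.

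For the asymptotic \eqref{eqSWL}, write $H(u)-K_E=(u-1/q)H_1(u)$ with $H_1$ analytic at $1/q$, so that
\[
Z_b(u)=K_E(1-qu)^{-1/|G|}-q^{-1}H_1(u)(1-qu)^{1-1/|G|}.
\]
Taking coefficients, the first term contributes exactly $K_E q^n\binom{n+1/|G|-1}{n}$; by standard singularity analysis the second term contributes $\sim -q^{-1}H_1(1/q)q^n/(\Gamma(1/|G|-1)n^{2-1/|G|})$, which is smaller than the main term by a factor $O(|H_1(1/q)|/(qn))$, and any further iterated subtraction yields only $O(1/n^2)$ lower-order corrections. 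A Cauchy estimate on a circle $|u-1/q|\asymp q^{-1/2}$ lying inside the disc of analyticity gives $|H_1(1/q)|=|H'(1/q)|\ll_{\genus(E),|G|} q^{1/2}$, producing the desired error $O_{\genus(E),|G|}(1/(\sqrt{q}\,n))$. The main technical obstacle is this last uniform Cauchy estimate: it requires bounding the supremum norm of $H$ on a circle of radius $\asymp q^{-1/2}$ about $1/q$ in terms of $\genus(E),|G|$ only, and this rests crucially on the Weil-based non-vanishing of $Q_E$ inside $|u|<q^{-1/2}$ (to keep the branch $Q_E^{1/|G|}$ bounded) together with the uniform convergence of the Euler product defining $\tilde H$.
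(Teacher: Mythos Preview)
Your approach is correct and, at bottom, equivalent to the paper's: both produce the factorization $Z_b(u)=H(u)(1-qu)^{-1/|G|}$ with $H$ analytic and controlled on $|u|<q^{-1/2}$, then extract coefficients. The organization differs. The paper takes logarithms, writes $\log D_E(u)=\sum_n\psi_E(n)u^n/n$, and proves the Chebotarev-type estimate $\psi_E(n)=q^n/|G|+O_{\genus(E),|G|}(q^{n/2})$ (Proposition~\ref{proppsieconst}, via Lemma~\ref{primebound}); this exhibits $H(u)=\exp\bigl(\sum_n(\psi_E(n)-q^n/|G|)u^n/n\bigr)$, after which the coefficient extraction is delegated to a packaged Selberg--Delange theorem \cite[Thm.~3.3]{gorodetsky2017}. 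You instead factor through the Dedekind zeta function as $Z_b=\zeta_{\mathcal O_E}^{1/|G|}\tilde H$ and invoke the rationality $\zeta_{\mathcal O_E}(u)=Q_E(u)/(1-qu)$ together with the Riemann Hypothesis for curves directly. Your route bypasses the intermediate prime-counting lemmas; the paper's route bypasses the uniform transfer-theorem work that you sketch by hand at the end.

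One correctable imprecision: $\tilde H$ is \emph{not} bounded on the full open disc $|u|<q^{-1/2}$ whenever $|G|$ is even. Indeed $G$ then has an element of order $2$, so by Chebotarev a positive proportion of primes have $f(P;E)=2$, and their contribution to $\log\tilde H$ is of order $\sum_d(q|u|^2)^d/d$, which diverges as $|u|\uparrow q^{-1/2}$. What is true, and all you actually need, is that $\tilde H$ (hence $H$) is uniformly bounded on any disc $|u|\le cq^{-1/2}$ with $c<1$ fixed. Your Cauchy circle about $1/q$ can be taken of radius $\tfrac12 q^{-1/2}$, which for $q$ moderately large lies inside $|u|\le\tfrac34 q^{-1/2}$, and on that circle the required uniform bounds on $Q_E^{1/|G|}$ and $\tilde H$ hold. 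With that adjustment your Cauchy bound $|H'(1/q)|\ll_{\genus(E),|G|} q^{1/2}$ and the remainder of the argument go through.
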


The mean value of $r_{E/\FF_q(T)}$ depends on  the following zeta function
\[
\zeta_{\mathcal{O}_E}(s) = \sum_{0 \neq I \text{ ideal in }\mathcal{O}_E} \frac{1}{(\# \mathcal{O}_E / I)^s}.
\]
\begin{proposition}\label{prop:rfullint}
Let $E/\FF_q(T)$ be a non-trivial geometric extension of degree $d$. If $n \gg_{\genus(E),d} 1$ then
\begin{equation}\label{eqSWLb}
\left< r_{E/\FF_q(T)}(f) \right>_{f\in M_{n,q}} = \lambda_E \log q,
\end{equation}
where $\lambda_E>0$ is the residue of $\zeta_{\mathcal{O}_E}(s)$ at $s=1$, and it satisfies
\begin{equation}\label{eq:lambdaestimate}
\lambda_E \log q = 1+O_{\genus(E),d}(\frac{1}{\sqrt{q}}).
\end{equation}
\end{proposition}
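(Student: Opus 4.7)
The plan is to recognize the sum $\sum_{f \in M_{n,q}} r_{E/\FF_q(T)}(f)$ as an ideal-counting function and to extract it as a Taylor coefficient of the rational function $\zeta_{\mathcal{O}_E}$. Unique factorization in $\mathcal{O}_E$, exactly as in the proof of Lemma~\ref{lem:bcriteria}, gives
\[
\sum_{f \in M_{n,q}} r_{E/\FF_q(T)}(f) = c_n := \#\{I \subseteq \mathcal{O}_E : \deg \mathrm{Norm}_{E/\FF_q(T)}(I) = n\},
\]
so with $u = q^{-s}$ the $c_n$ are precisely the coefficients in the expansion $\zeta_{\mathcal{O}_E}(u) = \sum_{n\ge 0} c_n u^n$.

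Next, I would relate $\zeta_{\mathcal{O}_E}(u)$ to the Weil zeta function $Z_X(u) = P_E(u)/((1-u)(1-qu))$ of the smooth projective model $X$ of $E$, where $\deg P_E = 2\genus(E)$ and all reciprocal roots of $P_E$ have absolute value $\sqrt{q}$. Removing the Euler factors at the places of $E$ lying over infinity yields
\[
\zeta_{\mathcal{O}_E}(u) = Z_X(u) \prod_{\mathfrak{P} \mid \infty} \bigl(1 - u^{f(\mathfrak{P};E)}\bigr).
\]
Because $E$ is geometric, at least one prime lies over infinity, so factoring $1-u^{f} = (1-u)(1+u+\cdots+u^{f-1})$ cancels the pole of $Z_X(u)$ at $u = 1$ and produces $\zeta_{\mathcal{O}_E}(u) = R_E(u)/(1-qu)$ for a polynomial $R_E \in \ZZ[u]$ of degree at most $2\genus(E) + d - 1$. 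A partial-fraction decomposition then gives $\zeta_{\mathcal{O}_E}(u) = R_E(1/q)/(1-qu) + P(u)$ with $\deg P \le 2\genus(E)+d-2$. Hence once $n$ exceeds this degree, the identity $c_n = q^n R_E(1/q)$ is exact, and dividing by $\# M_{n,q} = q^n$ produces
\[
\left\langle r_{E/\FF_q(T)}(f)\right\rangle_{f \in M_{n,q}} = R_E(1/q), \qquad n \gg_{\genus(E), d} 1.
\]

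The change of variables $u = q^{-s}$, together with the elementary limit $(s-1)/(1-q^{1-s}) \to 1/\log q$ as $s \to 1$, identifies $R_E(1/q)$ with $\lambda_E \log q$; this proves \eqref{eqSWLb}. Positivity of $\lambda_E$ is automatic because $\zeta_{\mathcal{O}_E}(s)$ is a Dirichlet series with non-negative coefficients that diverges as $s \to 1^+$. For the estimate \eqref{eq:lambdaestimate} I would invoke the Riemann hypothesis for $X$: writing $P_E(u) = \prod_{i=1}^{2\genus(E)}(1-\alpha_i u)$ with $|\alpha_i| = \sqrt{q}$ gives $P_E(1/q) = 1 + O_{\genus(E)}(q^{-1/2})$, while the remaining factors in $R_E(1/q)$ coming from the infinite places, each of the form $1 - 1/q$ or $1 + q^{-1} + \cdots + q^{-(f(\mathfrak{P};E)-1)}$, differ from $1$ only by $O_d(1/q)$ since the number of primes above infinity and their residue degrees are bounded by $d$. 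Multiplying out yields $\lambda_E \log q = 1 + O_{\genus(E), d}(q^{-1/2})$.

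The only nontrivial bookkeeping is the cancellation of the pole of $Z_X(u)$ at $u=1$ against the factor coming from infinity and the resulting sharp degree bound on $R_E$; once this is set up cleanly, all remaining work is classical manipulation of Weil zeta functions and will not cause serious difficulty.
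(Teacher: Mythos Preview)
Your proposal is correct and follows essentially the same route as the paper: both identify the generating function $\sum_n \langle r_{E/\FF_q(T)}\rangle_{M_{n,q}} (qu)^n$ with $\mathcal{Z}_{\mathcal{O}_E}(u)$, obtain the latter from the Weil zeta function by stripping the Euler factors above infinity so that the pole at $u=1$ cancels and only a simple pole at $u=1/q$ remains, and then read off the exact value $\tilde{P}_E(1/q)=\lambda_E\log q$ for $n$ beyond the degree of the numerator, with the estimate \eqref{eq:lambdaestimate} coming from the Riemann Hypothesis for curves. The only cosmetic differences are that you phrase the coefficient extraction via an explicit partial-fraction step and bound $\deg R_E$ by $2\genus(E)+d-1$ rather than the sharper $2\genus(E)+\sum_i f_i-1$; neither affects the argument.
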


Unlike the rest of the paper, here the methods are analytic. 

\subsection{Bounds on prime counting functions}
We use the notation $\mathcal{P}_{n,q}$ introduced in \S\ref{sec:ChebFunc}, with one modification -- we do not include the infinite prime in $\mathcal{P}_{1,q}$. For any positive integer $f$, define
\begin{equation*}
\pi_{E;f}(n) = \sum_{\substack{P \in \mathcal{P}_{n,q},\\ f(P;E) = f}} 1
\end{equation*}
and set
\begin{equation*}
\psi_E(n) = \sum_{d  f \mid n} d  f  \pi_{E;f}(d).
\end{equation*}
\begin{lemma}\label{primebound}
We have
\[
|\pi_{E;1}(n) -\frac{q^n}{n|G|} | \ll \frac{\max \{\genus(E),|G|\}}{n} q^{n/2}.
\]
\end{lemma}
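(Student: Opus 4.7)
The plan is to deduce this from the effective function field Chebotarev density theorem \eqref{eq:Cheb_FF} applied to the trivial conjugacy class. Since $E/\FF_q(T)$ is geometric, in the notation of \S\ref{sec:ChebFunc} we have $\nu = 1$ and the compatibility condition $\phi_C = \phi^n$ is automatic for $C = \{e\}$. Plugging $C = \{e\}$ and $|C| = 1$ into \eqref{eq:Cheb_FF} yields
\[
\left|\pi_{\{e\};q}(n;E) - \frac{q^n}{n|G|}\right| \ll \frac{\max\{\genus(E),|G|\}}{|G|}\cdot\frac{q^{n/2}}{n},
\]
which is already a bit stronger than the target bound by a factor of $|G|$ in the denominator.

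The next step is to compare $\pi_{E;1}(n)$ with $\pi_{\{e\};q}(n;E)$. For an unramified prime $P$ of $\FF_q(T)$ of degree $n$, any Frobenius element lies in $G = \Gal(E/\FF_q(T))$ and has order equal to $f(P;E)$ (because $\nu = 1$), so its conjugacy class is trivial precisely when $f(P;E) = 1$. Hence the two counts differ only through primes that ramify in $E$. By Riemann--Hurwitz applied to the cover of curves corresponding to $E/\FF_q(T)$, the degree of the different is $2\genus(E) - 2 + 2|G| \ll \genus(E) + |G|$; since each ramified prime of $\FF_q(T)$ of degree $n$ contributes at least $n$ to this degree, the number of such primes is $O((\genus(E)+|G|)/n)$, and therefore
\[
\bigl|\pi_{E;1}(n) - \pi_{\{e\};q}(n;E)\bigr| \ll \frac{\genus(E)+|G|}{n}.
\]

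Combining the two estimates and using $|G| \geq 1$ and $q \geq 2$ to absorb the ramification term into the Chebotarev term, we obtain
\[
\left|\pi_{E;1}(n) - \frac{q^n}{n|G|}\right| \ll \frac{\max\{\genus(E),|G|\}}{n}\,q^{n/2},
\]
as desired. The only substantive step here is invoking \eqref{eq:Cheb_FF}, whose deep content is the Weil Riemann Hypothesis for curves over finite fields; the rest of the argument --- identifying trivial Frobenius with the condition $f(P;E) = 1$ and discarding the $O((\genus(E)+|G|)/n)$ ramified primes --- is routine bookkeeping, so I do not anticipate a real obstacle.
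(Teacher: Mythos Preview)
Your proof is correct and follows essentially the same approach as the paper: apply \eqref{eq:Cheb_FF} with $C=\{e\}$, identify $\pi_{\{e\};q}(n;E)$ with $\pi_{E;1}(n)$ up to the contribution of ramified primes, and bound the latter via Riemann--Hurwitz. The paper presents the three ingredients in a slightly different order but the logic is identical.
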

\begin{proof}
Let $e$ be the identity element of $G$. The number $\pi_{C;q}(n;E)$ with $C=\{ e\}$ is equal to $\pi_{E;1}(n)$, up to a contribution of ramified primes:
\begin{equation}\label{eq:piuptoram}
|\pi_{\{e\};q}(n;E) - \pi_{E;1}(n)| \le \sum_{P \in \mathcal{P}_{n,q}, \text{ ramified in }E} 1.
\end{equation}
The Riemann-Hurwitz formula \cite[Thm.~7.16]{rosen2002} shows that
\begin{equation}\label{eq:riemannhurconseq}
\sum_{P \in \mathcal{P}_{n,q}, \text{ ramified in }E} 1 \ll \frac{\max\{\genus(E),|G|\}}{n}.
\end{equation}
By \eqref{eq:Cheb_FF} with $C=\{e\}$, we have
\begin{equation}\label{eq:chebapp}
|\pi_{\{e\};q}(n;E)- \frac{q^n}{n|G|}|  \ll \frac{\max\{ \genus(E),|G|\}}{|G|} \frac{q^{n/2}}{n}.
\end{equation}
From \eqref{eq:piuptoram}--\eqref{eq:chebapp} and the triangle inequality, the proof follows.
\end{proof}
\begin{proposition}\label{proppsieconst}
We have
\[
\left| \psi_E(n) - \frac{q^{n}}{|G|} \right| \ll \max\{ \genus(E),|G| \} q^{\frac{n}{2}}.
\]
\end{proposition}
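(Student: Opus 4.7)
The plan is to reduce the estimate to Lemma~\ref{primebound} together with the explicit Chebotarev bound \eqref{eq:Cheb_FF}, by isolating the dominant contribution to $\psi_E(n)$ coming from the pair $(d,f)=(n,1)$---that is, from degree-$n$ primes of $\FF_q[T]$ that split completely in $E$. I would begin by writing
\[
\psi_E(n) \;=\; n\,\pi_{E;1}(n) \;+\; \sum_{\substack{df\mid n\\ (d,f)\neq(n,1)}} df\,\pi_{E;f}(d),
\]
and invoking Lemma~\ref{primebound} to obtain $n\,\pi_{E;1}(n) = q^n/|G| + O(\max\{\genus(E),|G|\}\,q^{n/2})$, which already supplies the main term with an error of the required size.

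It then remains to show the residual sum is $O(\max\{\genus(E),|G|\}\,q^{n/2})$. The key structural observation is that any pair $(d,f)\neq(n,1)$ with $df\mid n$ satisfies $d\leq n/2$: if $f\geq 2$ then $d\leq n/f\leq n/2$, and if $f=1$ with $d<n$ and $d\mid n$ then again $d\leq n/2$. For each $f\mid|G|$ I would apply Chebotarev conjugacy-class-by-conjugacy-class, writing
\[
\pi_{E;f}(d) = \sum_{\substack{C\subseteq G\\ \ord(C)=f}} \pi_{C;q}(d;E) + O\bigl(\#\{\text{ramified primes of degree }d\}\bigr),
\]
plugging in \eqref{eq:Cheb_FF}, and exchanging the summation from $(d,f,C)$ to $(d,\sigma)$ with $\sigma\in G$. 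After this exchange, the main contribution of the residual sum becomes
\[
\frac{1}{|G|}\sum_{\substack{\sigma\in G\\ \ord(\sigma)\mid n}}\ord(\sigma)\sum_{d\mid n/\ord(\sigma)} q^d,
\]
whose $\sigma=e$ portion reproduces only the geometric-series tail $\sum_{d\mid n,\ d<n} q^d \ll q^{n/2}$, while each $\sigma\neq e$ summand is bounded by $\ord(\sigma)\cdot 2q^{n/\ord(\sigma)}\leq 2\ord(\sigma)q^{n/2}$ since $\ord(\sigma)\geq 2$. Summing and dividing by $|G|$ via $\sum_{\sigma\neq e}\ord(\sigma)\leq|G|^2$ yields an overall bound of $O(|G|\,q^{n/2})$, which is absorbed into $O(\max\{\genus(E),|G|\}\,q^{n/2})$.

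The Chebotarev error terms and ramified contributions are handled in parallel: the former contributes
\[
O\!\left(\frac{\max\{\genus(E),|G|\}}{|G|}\sum_{\sigma}\ord(\sigma)\,q^{n/(2\ord(\sigma))}\right),
\]
bounded by the same geometric-series trick, and the latter is controlled through Riemann--Hurwitz on the ramification divisor as in the proof of Lemma~\ref{primebound}. The main obstacle I anticipate is keeping the implied constant sharp: the naive bound $\pi_{E;f}(d)\leq\pi_q(d)$ combined with $\sum_{f\mid|G|}f=\sigma(|G|)$ would introduce a spurious factor of $\sigma(|G|)$, damaging the estimate. The conjugacy-class regrouping above circumvents this by replacing the divisor sum with $\sum_{\sigma\neq e}\ord(\sigma)\leq|G|^2$, so that after dividing by $|G|$ only a single factor of $|G|\leq\max\{\genus(E),|G|\}$ survives, matching the stated error.
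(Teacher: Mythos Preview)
Your decomposition $\psi_E(n)=n\,\pi_{E;1}(n)+T(n)$ and the use of Lemma~\ref{primebound} for the main term match the paper exactly. The difference is in how you bound the residual $T(n)$.

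The paper's argument for $T(n)$ is much simpler than your Chebotarev regrouping. It uses two elementary facts: (i) since $df\mid n$, one has $df\le n$; and (ii) for fixed $d$, the sum $\sum_{f}\pi_{E;f}(d)$ over \emph{all} possible inertia degrees equals the total prime count $|\mathcal P_{d,q}|\le q^d/d$. Combining these,
\[
T(n)\ \le\ n\sum_{d\le n/2}\sum_{f}\pi_{E;f}(d)\ \le\ n\sum_{d\le n/2}\frac{q^d}{d}\ \ll\ q^{n/2},
\]
with an \emph{absolute} implied constant, independent of $|G|$ and $\genus(E)$. Your anticipated obstacle---a spurious $\sigma(|G|)$ factor---arises only if one bounds $\pi_{E;f}(d)\le \pi_q(d)$ term-by-term and then sums $\sum_f f$. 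The paper sidesteps this by bounding the weight $df$ by $n$ first and then summing $\pi_{E;f}(d)$ over $f$ to recover $\pi_q(d)$; no divisor-sum ever appears.

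Your route via Chebotarev for each conjugacy class would also reach the goal, but it is heavier machinery for a bound that is purely combinatorial, and as written it carries a small uniformity issue: the Chebotarev error contribution you arrive at is
\[
\frac{\max\{\genus(E),|G|\}}{|G|}\sum_{\sigma}\ord(\sigma)\,q^{n/(2\ord(\sigma))}
\ \le\ \frac{\max\{\genus(E),|G|\}}{|G|}\bigl(q^{n/2}+|G|^2 q^{n/4}\bigr),
\]
and the second term is $\le \max\{\genus(E),|G|\}\,q^{n/2}$ only when $|G|\le q^{n/4}$. This is easily patched (e.g.\ by a trivial bound when $|G|$ is large), but the paper's two-line estimate avoids the issue entirely.
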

\begin{proof}
We separate the summands in $\psi_E(n)$ according to whether $d=n$ (a case which contributes $n\pi_{E;1}(n)$) or not:
\begin{equation*}
\psi_E(n) = n\pi_{E;1}(n) +T(n).
\end{equation*}
The triangle inequality gives us
\begin{equation}\label{triang}
\left|\psi_E(n) - \frac{q^n}{|G|}\right| \le \left|n\pi_{E;1}(n)-\frac{q^n}{|G|}\right| + T(n).
\end{equation}
We may bound $T(n)$ from above as follows, using the fact that $|\mathcal{P}_{n,q}| \le \frac{q^{n}}{n}$:
\[
T(n) \le \sum_{d=1}^{\lfloor \frac{n}{2} \rfloor}  \sum_{f=1}^{\lfloor \frac{n}{d} \rfloor} df \pi_{E;f}(d) \le n\sum_{d=1}^{\lfloor \frac{n}{2} \rfloor}  \sum_{f=1}^{\lfloor \frac{n}{d} \rfloor} \pi_{E;f}(d)\le n \sum_{d=1}^{\lfloor \frac{n}{2} \rfloor}  \frac{q^d}{d} .
\]
One can show by induction that
\[
\sum_{i=1}^{m} \frac{q^i}{i} \le 4\frac{q^m}{m}
\]
for all $m \ge 1$, which implies that 
\begin{equation}\label{s2upp22}
T(n) \ll q^{n/2}. 
\end{equation}
From \eqref{triang}, \eqref{s2upp22} and Lemma \ref{primebound}, we conclude the proof of the proposition.
\end{proof}
\subsection{Proof of Theorem~\ref{thm:bfullint}}
Consider the power series
\begin{equation}
D_E(u) = \sum_{f \in \FF_q[T], \text{ monic}} b_{E/\FF_q(T)}(f) u^{\deg f} = \sum_{n \ge 0 } \langle b_{E/\FF_q(T)}(f) \rangle_{f \in M_{n,q}} (qu)^{n}.
\end{equation}
Lemma \ref{lem:bcriteria} shows that $D_E$ admits the following Euler product:
\begin{equation*}
\begin{split}
D_{E}(u) &= \prod_{P \in \mathcal{P}_q} \left( 1+u^{\deg (P^{f(P;E)})}+u^{\deg (P^{2 \cdot f(P;E)})}+\ldots \right)\\
&= \prod_{P \in \mathcal{P}_q} \left(1-u^{f(P;E)\deg P}\right)^{-1}=  \exp\left( \sum_{P \in \mathcal{P}_q} \sum_{k \ge 1} \frac{u^{f(P;E)  \deg P \cdot k}}{k} \right).
\end{split}
\end{equation*}
From the definition of $\pi_{E;f}(n)$ and $\psi_E(n)$, we may write the above expression as
\begin{equation}\label{eq:deviapsie}
D_{E}(u) = \exp\left( \sum_{n \ge 1} \frac{u^n}{n}  \sum_{d f \mid n} d f  \pi_{E;f}(d) \right) = \exp\left( \sum_{n \ge 1} \frac{u^n}{n} \psi_E(n)\right).
\end{equation}
For any positive integer $n$, define
\begin{equation*}
e_{n} = \psi_E(n) - \frac{q^{n}}{|G|}.
\end{equation*}
Let
\begin{equation*}
a(u) = \exp\left( \sum_{n \ge 1} \frac{e_{n} u^{n}}{n} \right), \quad b(u) = (1-q u)^{-\frac{1}{|G|}} .
\end{equation*}
By \eqref{eq:deviapsie}, we have $D_{E}(u) = \exp\left(\sum_{n \ge 1} \frac{q^n u^n}{n|G|}  \right)  \exp\left( \sum_{n \ge 1} \frac{e_{n} u^n}{n} \right) = a(u) b(u)$, and so
\begin{equation*}
\langle b_{E/\FF_q(T)}(f) \rangle_{f \in M_{n,q}} = q^{-n}[u^{n}]D_E(u) = q^{-n}[u^n] a(u) b(u),
\end{equation*}
where $[u^n]F(u)$ is a notation for the coefficient of $u^n$ in a power series $F$. From Proposition \ref{proppsieconst} we have
\begin{equation*}
\left| e_n \right|  \ll \max\{ \genus(E), |G| \} q^{n/2}.
\end{equation*}
Hence we may apply \cite[Thm.~3.3]{gorodetsky2017} with $a(u), b(u)$ and obtain
\begin{equation*}
\langle b_{E/\FF_q(T)}(f )\rangle_{f \in M_{n,q}} = \binom{n+\frac{1}{|G|}-1}{n} \left( a(q^{-1}) + E \right),
\end{equation*}
where
\begin{equation*}
\left| E \right| \ll_{\genus(E),|G|} \frac{1}{\sqrt{q} n}.
\end{equation*}
which establishes \eqref{eqSWL} with $K_E = a(q^{-1})$. By \cite[Rem.~3.6]{gorodetsky2017} we have \eqref{eq:scalarestimate}. \qed

\subsection{Proof of Proposition~\ref{prop:rfullint}}

Let
\begin{align*}
\mathcal{Z}_{E}(u)&=\prod_{\mathcal{P} \text{ a prime in }E} (1-u^{\deg \mathcal{P}})^{-1},\\
\mathcal{Z}_{\mathcal{O}_E}(u)&=\prod_{\mathcal{P} \text{ a prime in }\mathcal{O}_E} (1-u^{\deg \mathcal{P}})^{-1},
\end{align*}
be the Dedekind zeta function of $E$ and of $\mathcal{O}_E$; in particular, $\mathcal{Z}_{\mathcal{O}_E}(q^{-s}) = \zeta_{\mathcal{O}_E}(s)$. They are related by 
\begin{equation}\label{eq:RelZetas}
\mathcal{Z}_{\mathcal{O}_E}(u)=\mathcal{Z}_E(u)   \prod_{\mathcal{P} \mid P_\infty} (1-u^{\deg \mathcal{P}}),
\end{equation}
where $P_\infty$ denotes the infinite prime of $\FF_q(T)$.
Suppose that $P_\infty =\mathcal{P}_1^{e_1}\cdots \mathcal{P}_m^{e_m}$ with $\mathcal{P}_i$ distinct primes of $E$ and put $f_i = f(\mathcal{P}_i;E)$. Then we have
\begin{equation}
\prod_{\mathcal{P} \mid P_{\infty}} (1-u^{\deg \mathcal{P}}) = \prod_{i=1}^{m}(1-u^{ f_i}).
\end{equation}
As $E/\FF_q(T)$ is geometric, the Riemann Hypothesis for Function Fields (RH) implies that $\mathcal{Z}_E$ is a rational function of the form
\begin{equation}\label{eq:rhforze}
\mathcal{Z}_{E}(u)=\frac{P_E(u)}{(1-qu)(1-u)},
\end{equation}
where $\deg P_E=2 \genus(E)$, $P_E(0)=1$ and the inverse absolute value of the roots of $P_E$ is $\sqrt{q}$. From \eqref{eq:RelZetas}--\eqref{eq:rhforze},
\begin{equation}\label{eq:zoeform}
\mathcal{Z}_{\mathcal{O}_E}(u)=\frac{P_E(u)}{(1-qu)} \cdot \frac{\prod_{i=1}^{m}(1-u^{ f_i})}{1-u} = \frac{\tilde{P}_E(u)}{(1-qu)},
\end{equation}
with $\tilde{P}_E(u) = P_E(u) \cdot \frac{\prod_{i=1}^{m}(1-u^{ f_i})}{1-u}$.
The function $\mathcal{Z}_{\mathcal{O}_E}(u)$ is a generating function for the mean value of $r_{E/\FF_q(T)}$:
\begin{equation}\label{eq:zoer}
\mathcal{Z}_{\mathcal{O}_E}(u)= \sum_{n \ge 0} \langle r_{E/\FF_q(T)}(f) \rangle_{f \in M_{n,q}} (qu)^n.
\end{equation}
From \eqref{eq:zoeform}, \eqref{eq:zoer}, it follows that if $n \ge \deg \tilde{P}_E $, we have
\[
\langle r_{E/\FF_q(T)}(f) \rangle_{f \in M_{n,q}} = \tilde{P}_E(\frac{1}{q}).
\]
As $\mathcal{Z}_{\mathcal{O}_E}(q^{-s}) = \zeta_{\mathcal{O}_E}(s)$, we have  $\tilde{P}_E(\frac{1}{q}) = \lambda_E \log q$. As $f_i, m \le d$, we have $\deg \tilde{P}_E = 2\genus(E) +\sum_{i=1}^{m} f_i-1 \ll_{\genus(E),d} 1$. Finally, RH implies that 
\[
\lambda_E \log q=  (1+O(\frac{1}{\sqrt{q}}))^{2 \genus(E)}  (1+O(\frac{1}{q}))^{O(d^2)} = 1+O_{\genus(E),d}(\frac{1}{\sqrt{q}}),
\]
as needed. \qed

\section*{Acknowledgment}
We wish to thank to Kumar Murty for helpful discussion on Chebotarev theorem.

\bibliographystyle{amsplain}
\bibliography{references}

\end{document}